\documentclass[reqno,centertags, 11pt]{amsart}
\usepackage{comment,graphicx,url}
\usepackage{color}

\definecolor{purple}{rgb}{0.65, 0, 0.9}
\definecolor{orange}{rgb}{1,.5,0}
\definecolor{gray}{rgb}{0.7,.7,0.7}

\usepackage{enumitem}
\usepackage[]{hyperref}
\usepackage{mathtools}
\usepackage{esint}

\newcommand{\quotes}[1]{``#1''}
\usepackage{graphics}
\usepackage{amssymb,amsmath,amsfonts}
\textheight 22cm \topmargin -0.2cm \leftmargin 0cm \marginparwidth 0mm
\textwidth 17cm \hsize \textwidth \advance \hsize by
-\marginparwidth \oddsidemargin -9mm \evensidemargin \oddsidemargin
\usepackage{latexsym}
\advance\hoffset by 5mm

\def\@abssec#1{\vspace{.1in}\footnotesize \parindent .2in
{\bf #1. }\ignorespaces}
\newtheorem{theorem}{Theorem}[section]

\newtheorem{lemma}[theorem]{Lemma}
\newtheorem{proposition}[theorem]{Proposition}

\newtheorem{definition}[theorem]{Definition}
\newtheorem{remark}[theorem]{Remark}

\newcommand*{\rom}[1]{\expandafter\@slowromancap\romannumeral #1@}

\newcommand{\be}{\mathbf e}

\parindent 1em
\parskip 1ex

\allowdisplaybreaks \numberwithin{equation}{section}

\renewcommand{\be}{\begin{equation}}
\newcommand{\ee}{\end{equation}}

\newcommand{\colvec}[1]{\begin{pmatrix} #1 \end{pmatrix}}


\begin{document}

\title[Boussinesq]{Long time stability and instability in the two-dimensional Boussinesq system with kinematic viscosity}

\author{Jaemin Park}
\address{Department Mathematik Und Informatik, Universit\"at Basel, CH-4051 Basel, Switzerland}
\email{jaemin.park@unibas.ch}

\subjclass[2020]{35Q35 -34D05}
\keywords{Asymptotic stability - Long time behavior - Boussinesq system}
\thanks{\textit{Acknowledgements}. The author acknowledges  the support of the SNSF Ambizione grant No. 216083. }
\begin{abstract}
 In this paper, we investigate the long-time behavior of the two-dimensional incompressible Boussinesq system with kinematic viscosity in a periodic channel, focusing on  instability and asymptotic stability near hydrostatic equilibria. Firstly, we prove that any hydrostatic equilibrium reveals long-time instability when the initial data are perturbed in  Sobolev spaces of low regularity. Secondly, we establish  asymptotic stability of the stratified density, which is strictly decreasing in the vertical direction, under sufficiently regular perturbations, proving that the solution converges to the unique minimizer of the total energy.
 
  Our analysis is based on the energy method. Although the total energy dissipates due to kinematic viscosity, such mechanism cannot capture the stratification of the density. We overcome this difficulty by discovering another  Lyapunov functional which exhibits the density stratification in a quantitative manner. 
\end{abstract}

\maketitle
\setcounter{tocdepth}{1}

\tableofcontents

\section{Introduction}
 Our aim in this paper is to study long-time behavior of the  two-dimensional Boussinesq system in a periodic channel, $\Omega:=\mathbb{T}\times (0,1)$, 
\begin{align}\label{Bouss}
\begin{cases}
\rho_t + u\cdot\nabla \rho = 0,\\
u_t + u\cdot\nabla u = -\nabla p - g\colvec{0 \\ \rho}+\nu \Delta u, & \text{ $t\ge 0$, $x=(x_1,x_2)\in \Omega$}\\
\nabla\cdot u =0,
\end{cases}
\end{align}
combined with the boundary and initial condition,
\begin{align}\label{bdcondition}
u=0 \text{ on $\partial \Omega$},\quad 
(\rho(0,x),u(0,x))=(\rho_0(x),u_0(x)).
\end{align}
Here, $\rho$ represents a scalar-valued function denoting the temperature density of fluids, while $u$ represents the velocity field that drives the fluids. The positive constants $g$ and $\nu$ denote the gravitational acceleration and  the kinematic viscosity coefficient, respectively. For simplicity we will assume that
\[
g= \nu = 1.
\]
The system \eqref{Bouss}  describes the evolution of the temperature distribution of a viscous, heat-conducting fluid under the effect of gravitational force (see, for example, \cite{blandford2008applications,gill1982atmosphere,majda2003introduction,pedlosky1987geophysical} for physical derivations of the model).  

\subsection{A brief overview of background}
  In regard to long-time behavior of the Boussinesq system, there are numerous results available in the literature. Perhaps the most necessary result in our theme is a global well-posedness theory to ensure the existence of solutions for large time. Considering \eqref{Bouss} in a general bounded domain with smooth boundary, it was proved in \cite{MR3135451} that if $(\rho_0,u_0)\in H^{k-1}\times H^{k}$ for $k\ge 2$, there exists a global in time solution $(\rho,u)\in L^{\infty}_{loc}([0,\infty); H^{k-1}\times H^{k})$ (to be more precise, a detailed proof in \cite{MR3135451} is provided only for the case $k=2$. However, as noted in their paper, the same method can be generalized to higher regularity class by adapting standard techniques to deal with the viscosity effect on the boundary (see e.g.,  \cite[Chapter 5]{galdi2000introduction} for such methods used in the well-posedness analysis of the Navier--Stokes equations in a bounded domain). When  the Boussinesq system is considered in the spatial domain $\mathbb{R}^2$, the global existence of solutions in $(\rho,u)\in L^{\infty}_{loc}([0,\infty); H^{k}\times H^{k})$ for $k\ge 3$ is available in a more explicit statement \cite{chae2006global,hou2005global}.

   We are particularly interested in asymptotic stability and instability of a class of steady states known as \textit{hydrostatic} equilibria:
\begin{align}\label{hydrostatic}
(\rho_s(x),u_s(x))=(\rho_s(x_2),0).
\end{align}
That is, the density $\rho_s$ is stratified in the sense that it is independent of the horizontal variable $x_1$, and the fluids are at rest. It is   straightforward to see that any solution of the form  \eqref{hydrostatic} is a stationary solution to the Boussinesq system. Moreover, the converse  also holds true: any steady state of the system \eqref{Bouss} must be a hydrostatic equilibrium. To see this, we consider the potential, kinetic and total energies:
\[
E_P(\rho):= \int_{\Omega}\rho(x)x_2dx,\quad E_K(u):=\frac{1}{2}\int_{\Omega}|u(x)|^2dx,\quad E_T(\rho,u):=E_P(\rho) + E_K(u).
\]
In  the system \eqref{Bouss}, the loss of total energy is solely attributed to the velocity dissipation. Denoting by $(\rho(t),u(t))$ a classical solution, a standard energy estimate reads (see Section~\ref{Lyapunsec} for detailed computations)
\begin{align}\label{energy_balance}
\frac{d}{dt}E_T(\rho(t),u(t)) = -\rVert \nabla u(t)\rVert_{L^2}^2.
\end{align}
Thus, in order for $(\rho_s,u_s)$ to be a steady state, the energy dissipation must be zero, yielding  $\nabla u_s=0$. Due to the non-slip boundary condition \eqref{bdcondition},  it must hold that $u_s\equiv 0$. Hence the velocity equation in \eqref{Bouss} tells us the external force field, $(0,\rho_s)^{T}$, must be a gradient field. Consequently  $\rho_s$ is stratified.  We note that the above argument might not be justified when the system is considered in an unbounded domain. Allowing the solution to have infinite (total) energy or the fluids to slip on the boundary, one can mathematically derive non-hydrostatic equilibria such as shear flows (see, for instance, \cite{masmoudi2022stability,zillinger2021enhanced}).
   
  Concerning the stability analysis near hydrostatic equilibria, most of the available research has been focused on a particular density profile:
  \begin{align}\label{linear_dens}
  \rho_s(x_2) = 1-\alpha x_2,\text{ for some $\alpha>0$}.
  \end{align}
  since such a  linear density profile reduces much of the technical difficulty in the analysis of the linearized system. Also, the sign-condition on the coefficient $\alpha>0$ is clearly necessary to establish stability; otherwise instability is observed even in the linearized system \cite[Theorem 1.4 (3)]{MR3815212} (such instability, resulting from the density gradient, is often referred to as \textit{Rayleigh--B\'enard instability}).  As an extension of the analysis on the linearized system conducted in \cite{MR3815212}, the authors in \cite{tao2020stability} investigated nonlinear stability in the spatial domain $\mathbb{T}^2$, proving that  the density  $\rho(t)$ will eventually converge to a stratified one as $t\to \infty$, under the additional assumption that $\sup_{t>0}\sum_{k\in \mathbb{Z}^2}|\hat{\rho}(k_1,k_2,t)| <\infty,$
  where $\hat{\rho}$ denotes the usual Fourier coefficient of $\rho$. 
  
   There are several results concerning the stability of slightly different models. The authors in \cite{MR3974167} replaced $\Delta u$ by $-u$ in the velocity equation of \eqref{Bouss}, and established asymptotic stability of hydrostatic equilibria with the linear density as in \eqref{linear_dens}. We also mention the work \cite{dong2022asymptotic}, where the authors established asymptotic stability of the system \eqref{Bouss} but with a different boundary condition which prevents the creation of vorticity on the boundary. We will give a  slightly more detailed comparison between these models and ours in Remark~\ref{comparison1}.

    \subsection{Some observations for the energy structure}\label{obsermay}
    A trivial observation from \eqref{energy_balance} is that the total energy is always decreasing. Integrating \eqref{energy_balance} over time, we obtain
   \begin{align}\label{integrat222ingint}
 \frac{1}{t}\int_0^t \rVert \nabla u(s)\rVert_{L^2}^2ds =   \frac{1}{t}\left(E_T(\rho_0,u_0)-E_{T}(\rho(t),u(t))\right) = O(t^{-1}),\text{ for all $t\gg 1$}
   \end{align}
 As $t\to \infty$, we observe $\rVert \nabla u(s)\rVert_{L^2}$ decays like $O(t^{-1/2})$, in an average sense. However the velocity converging to $0$  is not enough to conclude that the solution  converges to a steady state. The  density may induce velocity again, unless it is exactly stratified. Indeed, the main difficulty is that the energy balance \eqref{energy_balance}  neither captures the energy transfer between the potential and kinetic energies nor provides any information about the density stratification.
  
   In order to overcome this difficulty, we introduce an auxiliary vector field $v=v(t)$, a solution to the so-called steady Stokes equation:
\begin{align}\label{sB22vec}
\begin{cases}
-\Delta v = -\nabla q -\colvec{0 \\ \rho(t)},\quad \nabla\cdot v=0, & \text{ in  $\Omega$}\\
v = 0. & \text{ on $\partial\Omega$.}
\end{cases}
\end{align} 
The motivation behind this definition is as follows:  In the velocity equation in \eqref{Bouss}, we formally neglect the terms $u_t$ and $u\cdot \nabla u$, since they are expected to become much smaller, compared to the other terms, as the solution approaches to a hydrostatic equilibrium. The resulting velocity equation coincides with \eqref{sB22vec}. In other words, we expect $u-v$ to vanish throughout the evolution.  To quantify this expectation, we consider a quantity $S$ defined by
\[
S(t):=\int_{\Omega} |u(t)-v(t)|^2 dx.
\]
 Our key observation is that a suitable linear combination of $S(t)$ and the total energy is a Lyapunov functional (see Proposition~\ref{lyaypiunove} for a precise statement):
 \[
 \frac{d}{dt}\left(E_T(\rho,u)+ S \right)\lesssim -(\rVert \nabla v\rVert_{L^2}^2 + \rVert \nabla u\rVert_{L^2}^2 +\rVert \nabla(u-v)\rVert_{L^2}^2 ).
 \]
 Integrating this over time, similar computations as in \eqref{integrat222ingint} now give us
 \[
 \frac{1}{t}\int_0^t \rVert \nabla v(s)\rVert_{L^2}^2 + \rVert \nabla u(s)\rVert_{L^2}^2 + \rVert \nabla(u-v)\rVert_{L^2}^2ds = O(t^{-1}),\text{ for $t\gg 1$.}
 \]
This reveals that the kinetic energies stored in both vector fields $u$ and $v$  decay over time in a time average sense,
 \begin{align}\label{quat_r}
  \rVert  \nabla u\rVert_{L^2} + \rVert  \nabla v\rVert_{L^2}=O(t^{-1/2}).
 \end{align}
 We emphasize that the new Lyapunov functional $E_T+S$ is certainly more useful than the total energy in that it provides  quantitative information for the density stratification. The  Biot-Savart law for $v$ in \eqref{sB22vec} corresponds to $v=\nabla^\perp \Delta_{\Omega}^{-2}\partial_1\rho(t)$. Hence the decay rate \eqref{quat_r} formally yields
 \begin{align}\label{anisot1}
\rVert \partial_1\rho\rVert_{H^{-2}} = O(t^{-1/2}),
 \end{align}
 indicating that the dependence on $x_1$ of $\rho$ vanishes in a weak sense as $t\to \infty$. 
 
\subsection{Main results for instability and the asymptotic stability}
  As a first application of this energy structure, we present a  long-time instability of arbitrary  equilibrium under small perturbations in $H^{2-}$. This is motivated by the instability mechanism introduced in \cite[Theorem 1.5]{kiselev2023small} for the incompressible porous media equation.  
    \begin{theorem}\label{instability}
Let $\rho_s$ be a smooth stratified density, that is, $\rho_s(x)=\rho_s(x_1,x_2)$ is independent of $x_1$. For any $\gamma,\epsilon>0$ and initial velocity $u_0\in C^\infty(\Omega)$ such that $\nabla\cdot u_0=0$ and $u_0=0$ on $\partial\Omega$, there exists an initial density $\rho_0\in H^{2-\gamma}\cap C^\infty(\Omega)$ such that  
\[
\rVert \rho_0 -{\rho}_s\rVert_{H^{2-\gamma}(\Omega)}\le \epsilon
\]
and 
\[
 \limsup_{t\to\infty} t^{-k/4}\rVert\partial_1\rho(t)\rVert_{H^{k}(\Omega)} =\infty,\text{ for all $k> 0$.}
\]
\end{theorem}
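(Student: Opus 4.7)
My approach is by contradiction, combining the Lyapunov-derived negative Sobolev decay with an interpolation bootstrap. Suppose, toward contradiction, that for some $k_0>0$ there exists a constant $C>0$ with
\[
\|\partial_1\rho(t)\|_{H^{k_0}(\Omega)}\le C\,t^{k_0/4}\qquad\text{for all }t\ge 1.
\]
The Lyapunov identity for $E_T+S$ discussed in Subsection~\ref{obsermay}, after integrating the differential inequality over dyadic intervals $[T,2T]$ and applying Chebyshev's inequality, produces a sequence $t_n\to\infty$ along which the pointwise bound
\[
\|\partial_1\rho(t_n)\|_{H^{-2}(\Omega)}\le C\,t_n^{-1/2}
\]
holds, upgrading the averaged estimate \eqref{anisot1} at these selected times.

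By log-convexity of the Sobolev norms between the endpoints $s=-2$ and $s=k_0$, the contradiction assumption then yields, for each $s\in[-2,k_0]$,
\[
\|\partial_1\rho(t_n)\|_{H^s(\Omega)}\le C_s\,t_n^{s/4}.
\]
This is precisely the borderline rate compatible with the $\limsup$ claim of the theorem, and the whole point of the argument is to show it cannot be maintained at every intermediate $s$. Choosing any $s\in(-2,0)$ gives a positive order of decay; using the Poincar\'e inequality in the horizontal direction applied to the $x_1$-mean-zero part of $\rho(t_n)$ then upgrades this to $\rho(t_n)-\langle\rho(t_n)\rangle_{x_1}\to 0$ strongly in a negative Sobolev space, where $\langle\cdot\rangle_{x_1}$ denotes the horizontal average. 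Consequently, along a further subsequence, $\rho(t_n)$ approaches a purely stratified function $\bar\rho=\bar\rho(x_2)$.

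Next I would pin down $\bar\rho$ using the transport structure of \eqref{Bouss}: because $\rho(t)$ is transported by a divergence-free velocity, its distribution function is preserved, and together with boundedness in $L^2$ this forces $\bar\rho$ to coincide (in the strong-convergence case) with the monotone horizontal rearrangement $\rho_0^\sharp$ of $\rho_0$, or in the weak-convergence case to sit inside the Hardy--Littlewood--P\'olya convex hull of the rearrangement orbit of $\rho_0$. The instability is then engineered by choosing $\rho_0$, in the spirit of the Kiselev--Park construction for IPM \cite{kiselev2023small}, of the form
\[
\rho_0(x_1,x_2)=\rho_s(x_2)+\sum_{j\ge 1}\delta_j\,\chi_j(x_2)\sin(N_j\,x_1),
\]
with amplitudes $\delta_j$ and frequencies $N_j\uparrow\infty$ tuned so that $\sum_j\delta_j N_j^{2-\gamma}\le\eps$ while the accumulated high-frequency $x_1$-content is large enough that, for each $k_0>0$, the interpolated rate $t_n^{k_0/4}$ must eventually be violated along a subsequence.

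The hardest step is the final one. The interpolated rates $t_n^{s/4}$ are, on their own, perfectly consistent with stability; extracting a contradiction requires matching the transport-level equimeasurability of $\rho(t_n)$ with $\rho_0$ against the quantitative convergence $\rho(t_n)\to\bar\rho$, in a way that is violated by the oscillatory construction above. The borderline regularity $H^{2-\gamma}$ is essential here, since the construction needs unbounded $H^2$ norm of $\rho_0-\rho_s$ (assembled from infinitely many modes) in order to force growth of $\|\partial_1\rho(t_n)\|_{H^{k_0}}$ at the required rate while keeping the $H^{2-\gamma}$ norm below $\eps$. Adapting the IPM mechanism further requires careful accounting for the viscous contribution $\nu\Delta u$, which replaces the purely elliptic Darcy law by the Stokes correction $u-v$ controlled by the new Lyapunov functional; the perturbative construction must survive this modification.
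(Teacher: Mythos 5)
The proposal does not close the argument and misses the paper's key structural ingredient. The heart of the paper's proof is a persistent \emph{lower bound}: by choosing $\rho_0$ of bubble type (Lemma~\ref{bublel}), one invokes Lemma~\ref{kislem} to guarantee $\|\partial_1\rho(t)\|_{L^2}\ge C(\rho_0)>0$ \emph{for all} $t>0$, since the bubble structure of the level sets is preserved under transport by a smooth divergence-free field. This uniform lower bound, combined with the Gagliardo--Nirenberg interpolation
\[
\|\partial_1\rho\|_{L^2}\le C\,\|\nabla v\|_{L^2}^{k/(k+2)}\,\|\partial_1\rho\|_{H^k}^{2/(k+2)},
\]
produces a lower bound $\|\partial_1\rho(t)\|_{H^k}\ge C\,\|\nabla v(t)\|_{L^2}^{-k/2}$. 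Feeding in $\liminf_{t\to\infty}t\|\nabla v(t)\|_{L^2}^2=0$ from the Lyapunov functional then gives the desired $\limsup t^{-k/4}\|\partial_1\rho(t)\|_{H^k}=\infty$ directly and without any contradiction argument. Your proposal never identifies this lower bound, and as you acknowledge yourself, the interpolated rates $t_n^{s/4}$ you obtain are ``on their own, perfectly consistent with stability.'' The interpolation in your sketch only produces \emph{upper} bounds on intermediate Sobolev norms, which cannot yield the growth conclusion; the paper interpolates in the opposite direction, using a fixed lower bound at $L^2$ to push a lower bound \emph{up} to $H^k$.

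Two further issues. First, the equimeasurability argument you invoke in the contradiction step is not sound in the topology you are working with: convergence of $\rho(t_n)-\langle\rho(t_n)\rangle_{x_1}$ to zero only in a negative Sobolev space does not preserve the distribution function, so one cannot conclude that the limit is equimeasurable with $\rho_0$ or lies in its rearrangement orbit. The paper sidesteps this entirely by never passing to a limit of $\rho(t_n)$. Second, your proposed construction $\rho_0=\rho_s+\sum_j\delta_j\chi_j\sin(N_jx_1)$ requires unbounded $H^2$ norm while staying $C^\infty(\Omega)$; on the bounded channel $\Omega=\mathbb{T}\times(0,1)$ a function smooth up to the closure has all Sobolev norms finite, so the two requirements are mutually exclusive. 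The paper's construction instead uses a single localized smooth bump (bubble type), whose $H^{2-\gamma}$ norm can be made arbitrarily small precisely because $H^{2-\gamma}$ fails to embed into $C^1$; the $H^2$ norm of $\rho_0-\rho_s$ is finite but possibly large, which is all that is needed.
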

\begin{remark}
The proof of Theorem~\ref{instability} heavily relies on the argument made in \cite[Section 4-5]{kiselev2023small} in the context of the incompressible porous media equation (IPM), where the authors connected the decay of an anisotropic negative Sobolev norm to a growth of high Sobolev norms. We also mention that the same idea was  adapted to prove instability in the Stokes transport system  (Stokes) in \cite[Theorem 3.7.2]{leblond2023well}. The only difference in our setting, compared to these models, is that the potential energy is not monotone decreasing.  Our theorem will be established by rigorously justifying the estimate given in  \eqref{anisot1}.
\end{remark}

 Our second main result of this paper is to establish asymptotic stability near stable hydrostatic equilibria.  In this stability analysis, we restrict the class of steady states to prevent the Rayleigh--B\'enard instability. More precisely, we will consider $\rho_s$ such that
 \begin{align}\label{steadtysd2sd}
\gamma:=\inf_{x_2\in[0,1]}\left(-\partial_2{\rho}_s(x_2)\right)>0,\text{ and } \partial_2{\rho}_s\in H^4([0,1]). 
\end{align}
 Let us consider a solution $(\rho(t),u(t))$ whose initial data is a small perturbation from a hydrostatic equilibrium. A trivial but crucial property of the Boussinesq system~\eqref{Bouss} is that the density is transported by an incompressible flow, which preserve the area of each super level set.  If $\rho(t)$ indeed asymptotically converges to another stratified density in a sufficiently strong topology (strong enough to preserve topological properties of the super level sets), the limit density must be given by a push-forward of a measure-preserving diffeomorphism of $\rho_0$. This motivates us to define a vertical (decreasing) rearrangement as 
   \begin{align}\label{verticla_re}
 \rho\mapsto  \rho^*(x):=\int_0^\infty 1_{\left\{0\le x_2 \le |\left\{ \rho > s\right\}|\right\}}ds.
   \end{align}
 From the above consideration, it should come as no surprise  that the vertical rearrangement of the initial density, $\rho_0^*$, is the unique candidate for the long-time limit of  the density, $\lim_{t\to\infty}\rho(t)$.

 Another important property of the vertical rearrangement is  that $\rho^*$ is the unique minimizer of the potential energy, among all the densities which can be obtaind by a push-forward of a measure-preserving diffeomorphism of $\rho$. For a solution $(\rho(t),u(t))$ to the Boussinesq system, this observation implies   $ E_P(\rho(t))\ge E_P(\rho_0^*)$, resulting in
 \[
E_P(\rho(t)) + E_K(u(t)) \ge E_P(\rho_0^*),\text{ for all $t>0$}.
\]  
 Hence, the asymptotic stability in the Boussinesq system can be interpreted as the convergence in the solution curve $t\to (\rho(t),u(t))$, which minimizes the total energy. The precise statement  is as follows:
\begin{theorem}\label{main_Boussinesq}
Let $(\rho_s,0)$ be a hydrostatic equilibrium satisfying \eqref{steadtysd2sd}. There exist $\epsilon=\epsilon(\gamma,\rVert \rho_s\rVert_{H^4(\Omega)})$ and $C=C(\gamma,\rVert \rho_s\rVert_{H^4})$ such that 
if $(\rho_0-{\rho}_s,u_0)\in \left(H^2_0(\Omega)\cap H^4(\Omega)\right)^2$ and 
\begin{align*}
\rVert \rho_0-{\rho}_s\rVert_{H^4}+\rVert u_0\rVert_{H^4}\le \epsilon,
\end{align*} then the  solution $(\rho(t),u(t))$ to the Boussinesq system \eqref{Bouss} satisfies  
\begin{align}\label{mnum12}
\rVert \rho(t)-{\rho}_s\rVert_{H^4(\Omega)}  + \rVert u(t)\rVert_{H^4}\le C\epsilon,\text{ for all $t>0$}.
\end{align} Furthermore, the total energy decays as 
\begin{align*}
E_P(\rho(t))-E_P(\rho_0^*) + E_K(u(t))\le C\epsilon^2 t^{-2},
\end{align*}
where $\rho_0^*$ is the vertical rearrangement of the initial density. Consequently, the solution $(\rho(t),u(t))$ converges to  $(\rho_0^*,0)$, satisfying
\begin{align}\label{mnum122}
\rVert \rho(t)-\rho_0^*\rVert_{L^2(\Omega)} + \rVert u(t)\rVert_{L^2}\le C\frac{\epsilon}{t},\text{ for all $t>0$.}
\end{align}
\end{theorem}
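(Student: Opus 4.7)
My plan has four stages: a bootstrap giving the uniform $H^4$ bound, identification of $\rho_0^*$ as the asymptotic limit together with a coercivity lower bound on the excess potential energy, upgrade of the Lyapunov dissipation from \S\ref{obsermay} to polynomial decay $F(t)\lesssim \epsilon^2/t^2$ of a modified energy, and finally translation to $L^2$ convergence of $(\rho,u)$. Writing $\tilde\rho:=\rho-\rho_s$, I derive the coupled system satisfied by $(\tilde\rho,u)$ (with linear coupling $u_2\partial_2\rho_s$ and Boussinesq forcing $-(0,\tilde\rho)^T$) and close standard differentiated $H^4$ energy estimates, using $\partial_2\rho_s\in H^4$ and the no-slip condition. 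Combined with the global well-posedness theorem of \cite{MR3135451}, a continuity argument then yields \eqref{mnum12} provided $\epsilon$ is small enough relative to $\gamma$ and $\|\rho_s\|_{H^4}$.

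Next, because $u$ is divergence-free and vanishes on $\partial\Omega$, $\rho(t)$ is an equimeasurable rearrangement of $\rho_0$ at all times, and $\rho_0^*$ is the unique minimizer of $E_P$ among such rearrangements. The $L^2$-contractivity of the vertical rearrangement map gives $\|\rho_0^*-\rho_s\|_{L^2}\leq\|\rho_0-\rho_s\|_{L^2}\leq\epsilon$, so $\rho_0^*$ is itself a small perturbation of $\rho_s$. Using the strict stratification $-\partial_2\rho_s\geq\gamma$ and expanding $E_P$ around $\rho_0^*$ (valid since $\rho_0^*$ is decreasing in $x_2$ and close to $\rho_s$ in $L^2$), I obtain the coercivity
\[
E_P(\rho)-E_P(\rho_0^*)\geq c(\gamma)\,\|\rho-\rho_0^*\|_{L^2}^2
\]
throughout the perturbative regime of Step 1.

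The heart of the argument is stage three. Set $F(t):=E_T(\rho,u)-E_P(\rho_0^*)+S(t)\geq 0$. Proposition~\ref{lyaypiunove} yields $\dot F\leq -cD$ with $D:=\|\nabla u\|_{L^2}^2+\|\nabla v\|_{L^2}^2+\|\nabla(u-v)\|_{L^2}^2$, so I seek a super-linear estimate $F\lesssim\epsilon^a D^b$ with $b<1$, which converts $\dot F\leq -cD$ into an ODE of the form $\dot F\leq -c\epsilon^{-a/b}F^{1/b}$ and, upon integration, delivers $F(t)\lesssim\epsilon^2/t^2$ for appropriately tuned exponents. The kinetic part is trivial: Poincar\'e gives $\|u\|_{L^2}^2\leq CD$, and combining with the $H^4$ smallness from Step 1 supplies the $\epsilon^a$ factor by straightforward interpolation. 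For the potential part I split $\rho-\rho_0^*=(\rho-\bar\rho)+(\bar\rho-\rho_0^*)$ into the horizontal fluctuation and the stratified residual, with $\bar\rho$ the average in $x_1$. The fluctuation is tied to $D$ via the Biot--Savart identity $v=\nabla^{\perp}\Delta_\Omega^{-2}\partial_1\rho$, giving $\|\partial_1\rho\|_{H^{-2}}^2\lesssim\|\nabla v\|_{L^2}^2\leq D$; Sobolev interpolation against the $H^4$ bound then controls $\|\rho-\bar\rho\|_{L^2}$ by a fractional power of $D$. The stratified residual is the delicate piece: incompressibility plus no-slip forces $\overline{u_2}\equiv 0$, so $\bar\rho$ satisfies $\bar\rho_t=-\partial_2\overline{u_2(\rho-\bar\rho)}$, a bilinear form in two already-decaying quantities whose asymptotic value is pinned to $\rho_0^*$ by rearrangement conservation.

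Finally, inserting $F(t)\leq C\epsilon^2/t^2$ into the coercivity from Step 2 gives $\|\rho-\rho_0^*\|_{L^2}\leq C\epsilon/t$, and $E_K(u)\leq F$ gives $\|u\|_{L^2}\leq C\epsilon/t$, proving \eqref{mnum122}. The main obstacle is the stratified residual $\bar\rho-\rho_0^*$ in Step 3: the Lyapunov dissipation is blind to the horizontally-averaged component of the density, so controlling $\|\bar\rho-\rho_0^*\|_{L^2}$ forces one to simultaneously exploit the transport equation, the rearrangement constraint fixing the asymptotic target, and careful interpolation against the high-regularity bound, with exponents finely tuned so that the resulting differential inequality integrates to the sharp rate $t^{-2}$ rather than something slower.
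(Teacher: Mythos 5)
Your high-level outline shares the paper's overall strategy (Lyapunov functional $E_T+S$, differential inequality, coercivity of the potential energy near $\rho_0^*$, translation to $L^2$), but as written the proposal has two genuine gaps, each located exactly where the paper does its real work.

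\textbf{Gap 1: Stage 1 is not ``standard'' and cannot be closed independently of the decay estimates.} You propose to ``close standard differentiated $H^4$ energy estimates.'' This fails for two reasons. First, differentiating the velocity equation in space is incompatible with the no-slip condition: spatial derivatives of $u$ do not satisfy a clean boundary-value problem, so the commutator with the boundary destroys the estimate. The paper sidesteps this by differentiating $w=u-v$ in \emph{time} and then recovering spatial regularity from the steady Stokes operator (Lemmas~\ref{Stokes_wellposed}, \ref{leray_lem1}, \ref{timeinu}); this is the standard device for viscous fluids in bounded domains, not a differentiated energy estimate. Second, and more fundamentally, the high-order energy inequality (Proposition~\ref{bouenergy_es}) has source terms $A_1,A_2$ of the form $\|u_2\|_{W^{2,\infty}}\|\Delta^2\theta\|_{L^2}$, $\|\Delta^2 w\|_{L^2}\|\Delta^2\theta\|_{L^2}$, etc.\ which are not quadratically small in the $H^4$ data: both factors are merely $O(\epsilon)$, so a naive Gronwall gives only $f(t)\lesssim\epsilon^2 e^{Ct}$, which does not survive to $t=\infty$. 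Closing the $H^4$ bound therefore \emph{requires} the decay estimates (Propositions~\ref{|energyedeasd}, \ref{higherdrhappy}) that you place in stage 3, while the decay estimates require the $H^4$ bound. The paper breaks this circular dependency with the contradiction bootstrap at the end of Section 5, splitting $[0,T^*]$ at $T_M^*=\log\log M$ and using the time-averaged decay to integrate $A_1,A_2$ over $[T_M^*,T^*]$. Your proposal does not acknowledge this coupling, and without it stage~1 does not close.

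\textbf{Gap 2: the stratified residual argument is circular.} Your stage~2 coercivity $E_P(\rho)-E_P(\rho_0^*)\gtrsim\|\rho-\rho_0^*\|_{L^2}^2$ is indeed \eqref{energy_nondegeneracy} of Lemma~\ref{propoos}, but it is the \emph{upper} bound that must be connected to the dissipation. You split $\rho-\rho_0^*=(\rho-\bar\rho)+(\bar\rho-\rho_0^*)$ and tie only the horizontal fluctuation to $\|\nabla v\|_{L^2}^2\leq D$; for the stratified residual you appeal to the transport of $\bar\rho$ and a supposed ``asymptotic value pinned to $\rho_0^*$ by rearrangement conservation.'' But rearrangement conservation only says $\rho(t)^*=\rho_0^*$, not that $\bar\rho(\infty)=\rho_0^*$: the latter is a consequence of the convergence you are trying to prove, so the argument presupposes its conclusion and provides no pointwise-in-time control of $\|\bar\rho-\rho_0^*\|_{L^2}$ that can be absorbed into the dissipation. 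The paper avoids this entirely with the static coercivity $\|\partial_1 f\|_{L^2}\gtrsim\|f-f^*\|_{L^2}$ of \eqref{hatedogs} in Lemma~\ref{propoos}, valid whenever $f$ is $H^3$-close to the strictly decreasing $\rho_s$: with that inequality, the whole distance $\|\rho-\rho_0^*\|_{L^2}$ is bounded by the horizontal fluctuation $\|\partial_1\rho\|_{L^2}$, and your interpolation $\|\partial_1\rho\|_{L^2}^2\lesssim\|\partial_1\rho\|_{H^{-2}}^{4/3}\|\partial_1\rho\|_{H^4}^{2/3}\lesssim\epsilon^{2/3}D^{2/3}$ then does give the $3/2$-power ODE and the rate $t^{-2}$. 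Once Gap~2 is fixed with Lemma~\ref{propoos}, your interpolation route to the ODE is a legitimate alternative to the paper's Lemma~\ref{odelem} (the paper instead uses the time-integrability of $\|\partial_1\theta\|_{H^2}^2$ rather than the $\sup$ bound $\|\partial_1\theta\|_{H^4}\lesssim\epsilon$), but the coercivity input is not optional.
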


\begin{remark}\label{comparison1}
In the aforementioned works \cite{MR3974167,dong2022asymptotic},  asymptotic stability near hydrostatic equilibrium with $\rho_s(x)=1-x_2$ was established for slight variations of our Boussinesq system. In \cite{MR3974167}, the authors considered a model where the viscous damping $\Delta u$ is replaced by $-u$ in the velocity equation in \eqref{Bouss}, establishing asymptotic stability under perturbations in $H^{k}$ for $k\ge 16$. We note that our damping term $\Delta u$ cannot be viewed  stronger than $-u$, simply  counting the number of derivatives. Although our model induces stronger damping effect on the velocity, it induces strictly weaker damping effect on the density, which can be  detected from the linearized system. In \cite{dong2022asymptotic}, the authors modified the boundary condition \eqref{bdcondition} so that the flow slips and the vorticity always vanishes on the boundary,  preventing the boundary layer formation. In this setting, asymptotic stability was established under perturbations in $H^{k}$ for $k> 32$. The main difference of Theorem~\ref{main_Boussinesq}, compared to these works, is that our model reveals nontrivial difficulty due to the viscosity effect on the boundary. Another difference is that our method does not rely on the analysis of the linearized system, allowing us to achieve  asymptotic stability in much weaker regularity class of perturbations.
\end{remark}

\begin{remark}
The decay rate of the total energy $E_T(t)=O(t^{-2})$ obtained in Theorem~\ref{main_Boussinesq} matches with the numerical experiment in \cite{MR3815212} (note that the the initial data considered in their paper are not close to a hydrostatic equilibrium, and the simulation was performed in a spatial domain $[0,1]^2$ instead of the periodic channel). Moreover, our proof exhibits that the rate of the density stratification in \eqref{anisot1} can be improved. Indeed we will obtain
\[
\frac{2}{t}\int_{t/2}^t \rVert \nabla v(s)\rVert_{L^2}^2 + \rVert \nabla u(s)\rVert_{L^2}^2 ds \le_C \epsilon^2t^{-3},\text{ for all $t>0$},
\]
thus the long-time averages of $\rVert \partial_1\rho (t)\rVert_{H^{-2}}$ and $ \rVert \nabla u(t)\rVert_{L^2}$ decay like $O(t^{-3/2})$ at least.  We note that such decay rates are not directly deduced from interpolating \eqref{mnum12} and \eqref{mnum122}.
\end{remark}

Although the proof of Theorem~\ref{main_Boussinesq} is slightly lengthy, the key idea is simple. Adapting our observations described in Subsection~\ref{obsermay}, we treat the velocity difference $u-v$ as a perturbation. In the extreme case where the two velocities $u$ and $v$ are exactly equal, the transport equation for the density can be read as
\begin{align}\label{rkasd2sd}
\rho_t+ v\cdot \nabla\rho =0,\quad \text{$v$ is given by \eqref{sB22vec}.}
\end{align}
This is an active scalar equation, often referred to as   {the Stokes  transport system} (see, for example, \cite{mecherbet2018sedimentation} for a physical motivation of this model). Although the damping effect on the density in \eqref{rkasd2sd} may not be immediately apparent, an inviscid damping mechanism of this system was established in the work \cite{dalibard2023long}, motivated by the stability analysis for the  IPM equation \cite{elgindi2017asymptotic} (here, to avoid any potential confusion regarding the terminology, we note that  the term \quotes{inviscid damping} is used because the density in \eqref{rkasd2sd} does not exhibit any viscosity-induced dissipation, although the flow determined by \eqref{sB22vec} is viscous). We adapt this inviscid damping mechanism  based on  the proof in \cite{park2024stability} for the Stokes transport system. Meanwhile the perturbative quantity, $u-v$, will be  effectively estimated throughout the proof. 

\subsection{Organization of the paper} In Section~\ref{prelm}, we review classical lemmas regarding divergence free vector fields. In Section~\ref{Lyapunsec}, we give more detailed derivation of the new Lyapunov functional. The instability and the stability will be proved in Section~\ref{ins} and Section~\ref{assssss}, respectively. In the proof of asymptotic stability, several  estimates coming from some elementary ODEs will be used. Such estimates will be collected in Appendix~\ref{ode11sx}.

\subsection{Notations}
 Following the conventional practice, we denote by $C$ an implicit positive constant that may vary from line to line. In the case where $C$ depends on a quantity,  say $A$, we will represent it  as $C_A$ or $C(A)$. For two quantities, $A$ and $B$, we will also use the notation $A \le_C B$, indicating that $A \le CB$ for some constant $C > 0$.

\section{Preliminaries}\label{prelm}
\subsection{The steady Stokes equation and the Leray projection}
We  recall classical results concerning the steady Stokes equation and the Leray projection. Although the following lemmas hold true in general bounded domains, we will always assume that $\Omega=\mathbb{T}\times (0,1)$ is a periodic channel in our paper.

Given a vector field  $f$ in $\Omega$, let us consider the steady Stokes equation for $(v,q)$:
\begin{align}\label{steady_st}
\begin{cases}
-\Delta v = - \nabla q - f ,\quad \nabla \cdot v=0&\text{ in $\Omega$},\\
v=0 & \text{ on $\partial\Omega$},\\
\int_{\Omega}q(x)dx =0.
\end{cases}
\end{align}

\begin{lemma}\cite[ Theorem IV 6.1]{galdi2011introduction}\label{Stokes_wellposed}
Let  $f\in H^k(\Omega)$ for $k\ge -1$. The system \eqref{steady_st} has a unique  solution $(v,q)$ satisfying
\[
\rVert v\rVert_{H^{k+2}} +\rVert q\rVert_{H^{k+1}}\le C_k\rVert f\rVert_{H^k}.
\]
\end{lemma}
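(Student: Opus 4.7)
The plan is to follow the standard route for the stationary Stokes system, combining a variational existence result with an elliptic regularity bootstrap. First I would handle the base case $k=-1$ via Lax--Milgram on the Hilbert space $V:=\{w\in H^1_0(\Omega):\nabla\cdot w=0\}$ endowed with the Dirichlet inner product $a(v,w):=\int_\Omega \nabla v:\nabla w\,dx$. Coercivity is immediate from Poincar\'e's inequality (recall $v=0$ on $\partial\Omega$), and continuity of $w\mapsto \langle f,w\rangle_{H^{-1},H^1_0}$ is automatic for $f\in H^{-1}$, so there is a unique $v\in V$ with $a(v,w)=\langle f,w\rangle$ for all $w\in V$, and $\rVert \nabla v\rVert_{L^2}\lesssim \rVert f\rVert_{H^{-1}}$. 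The pressure is then recovered by de Rham's theorem: the functional $w\mapsto a(v,w)-\langle f,w\rangle$ on $H^1_0(\Omega)$ vanishes on $V$, hence is of the form $w\mapsto -\int_\Omega q\,\nabla\cdot w\,dx$ for a unique $q\in L^2(\Omega)$ with zero mean, with the bound $\rVert q\rVert_{L^2}\lesssim \rVert f\rVert_{H^{-1}}$ following from Ne\v{c}as/Bogovskii-type inequalities (surjectivity of $\nabla\cdot:H^1_0\to L^2_0$).

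Next I would upgrade regularity by induction on $k$. Because $\Omega=\mathbb T\times(0,1)$ has flat boundary and is periodic in $x_1$, tangential regularity is the easy direction: the Nirenberg difference-quotient method (or, equivalently here, direct differentiation in $x_1$) shows that $\partial_1 v, \partial_1 q$ solve the same Stokes system with datum $\partial_1 f$, so they inherit the estimate of the previous step. To control normal derivatives $\partial_2$, I would read the equation algebraically: from the second velocity component, $\partial_2 q = \Delta v_2 - f_2$, and from incompressibility $\partial_2 v_2 = -\partial_1 v_1$, so one normal derivative of $q$ and two normal derivatives of $v_2$ are controlled once tangential regularity and the previous-step bounds are known. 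The remaining bound on $\partial_2^2 v_1$ comes from the first component of the velocity equation, $\partial_2^2 v_1 = \partial_1 q - \partial_1^2 v_1 - f_1$. Iterating this purely algebraic trade of tangential for normal derivatives closes the induction and produces the desired $H^{k+2}\times H^{k+1}$ estimate.

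I expect the main obstacle to be handling the pressure at low regularity, since $q$ is only determined through $\nabla q$ and one must justify the zero-mean normalization and the recovery from the velocity equation in negative Sobolev spaces. This is clean in the periodic-channel geometry, because one can alternatively Fourier-expand in $x_1$ and reduce to a one-parameter family of ODE boundary-value problems on $(0,1)$, solving them (e.g.\ by a variational argument on the interval) with constants uniform in the Fourier mode and then summing via Plancherel. In either implementation the final statement matches the one cited from Galdi's monograph, and since the result is standard I would simply invoke \cite[Thm.\ IV.6.1]{galdi2011introduction} in the paper itself.
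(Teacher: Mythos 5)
The paper does not prove this lemma at all; it simply cites \cite[Theorem IV.6.1]{galdi2011introduction}, exactly as you note you would do at the end of your proposal. Your sketch (Lax--Milgram on divergence-free $H^1_0$ for the base case $k=-1$, pressure recovery via de~Rham together with the inf-sup/Ne\v{c}as inequality, and an induction that gains tangential regularity by differentiating in $x_1$ and then reads off normal derivatives of $v$ and $q$ algebraically from the equations and the incompressibility constraint --- or equivalently Fourier-expands in $x_1$ in the periodic channel) is a correct and standard route to the cited estimate, modulo a couple of immaterial sign slips in the algebraic identities.
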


 Let $(v,q)$ be a solution to the steady Stokes equation \eqref{steady_st}. Thanks to the divergence free condition, the velocity $v$ can be represented in terms of a stream function, $v=\nabla^\perp \Psi$ for some $\Psi:\Omega\mapsto \mathbb{R}$. In order to characterize the the stream function, we recall the following lemma concerning the bilaplacian operator:
    \begin{lemma}\cite[Lemma B.1]{dalibard2023long}\label{lemma_psi_stokes}
    Let $h\in H^k(\Omega)$ for $k\ge -2$. Then the bilaplacian equation,
    \[
    \begin{cases}
    \Delta^2 \Psi =  h & \text{ in $\Omega$},\\
    \Psi =\nabla\Psi = 0 & \text{ on $\partial \Omega$,}
    \end{cases}
    \]
    admits a unique solution $\Psi\in H_{0}^2(\Omega)\cap H^{k+4}(\Omega)$ and it satisfies
    \[
    \rVert \Psi\rVert_{H^{k+4}}\le C_k \rVert h\rVert_{H^k}.
    \]
    \end{lemma}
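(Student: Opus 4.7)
The plan is to handle the lowest-regularity case $k=-2$ by a variational argument, then bootstrap to higher $k$ by exploiting the product structure of $\Omega=\mathbb T\times(0,1)$. For the base case, I would work on $H^2_0(\Omega)$ with the bilinear form $a(\Psi,\Phi):=\int_\Omega\Delta\Psi\,\Delta\Phi\,dx$. Two applications of Poincar\'e's inequality (using that both $\Psi$ and $\nabla\Psi$ vanish on $\partial\Omega$) give $\|\Psi\|_{H^2}\le C\|\Delta\Psi\|_{L^2}$, so $a$ is continuous and coercive. For $h\in H^{-2}(\Omega)=(H^2_0(\Omega))^*$ the Lax--Milgram theorem then produces a unique $\Psi\in H^2_0$ satisfying $a(\Psi,\Phi)=\langle h,\Phi\rangle$ for every $\Phi\in H^2_0$, which is the weak form of $\Delta^2\Psi=h$ with clamped boundary data, together with $\|\Psi\|_{H^2}\le C\|h\|_{H^{-2}}$.

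For $k\ge -1$ I would bootstrap via Fourier decomposition in $x_1$. Writing $h=\sum_{n\in\mathbb Z}\hat h_n(x_2)e^{inx_1}$ and similarly for $\Psi$, the bilaplacian decouples into the family of fourth-order ODEs
\[
(\partial_2^2-n^2)^2\hat\Psi_n = \hat h_n \text{ on } (0,1),\qquad \hat\Psi_n(0)=\hat\Psi_n(1)=\partial_2\hat\Psi_n(0)=\partial_2\hat\Psi_n(1)=0,
\]
each uniquely solvable by an explicit Green's function whose dominant behavior for large $|n|$ is $e^{-|n||x_2-s|}$. This Green's function gains four derivatives in $x_2$ with constants uniform in $n$, and appropriately weighting the mode-wise estimates with powers of $n$ (corresponding to $x_1$-derivatives) delivers $\|\Psi\|_{H^{k+4}}\le C_k\|h\|_{H^k}$ after summing in $n$. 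Uniqueness at each level follows from the $k=-2$ step.

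The main technical burden is the $n$-uniformity of the Green-function estimates: one has to verify that the four-derivative gain in $x_2$ persists as $|n|\to\infty$ and that mixed derivatives in $x_1$ and $x_2$ exchange cleanly through the explicit fundamental solutions of $(\partial_2^2-n^2)^2$. This is a direct but slightly tedious computation with exponentials and polynomials on $[0,1]$. A slicker alternative is to invoke Agmon--Douglis--Nirenberg elliptic regularity for the clamped biharmonic --- a strongly elliptic fourth-order boundary-value problem satisfying the Shapiro--Lopatinski complementing condition on every smooth domain --- which yields the estimate abstractly and bypasses the mode-by-mode analysis.
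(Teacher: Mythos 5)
The paper does not prove this lemma: it simply cites it to~\cite[Lemma B.1]{dalibard2023long}, so there is no in-paper argument to compare against. Your proposal is a sound self-contained proof, and on the periodic channel $\Omega=\mathbb T\times(0,1)$ the route you chose---Lax--Milgram on $H^2_0(\Omega)$ with $a(\Psi,\Phi)=\int_\Omega\Delta\Psi\,\Delta\Phi\,dx$ for the base case $k=-2$, then Fourier in $x_1$ to reduce the bootstrap to the one-parameter family of clamped ODEs $(\partial_2^2-n^2)^2\hat\Psi_n=\hat h_n$ on $(0,1)$---is the natural one, and is in fact the method used in the cited reference. The coercivity you need, $\|\Psi\|_{H^2}\le C\|\Delta\Psi\|_{L^2}$, does hold here: expanding $\|\Delta\Psi\|_{L^2}^2$ and integrating by parts in the cross term (using periodicity in $x_1$ and $\nabla\Psi=0$ at $x_2\in\{0,1\}$) gives $\|\Delta\Psi\|_{L^2}^2=\|\partial_1^2\Psi\|_{L^2}^2+2\|\partial_{12}\Psi\|_{L^2}^2+\|\partial_2^2\Psi\|_{L^2}^2$, after which two Poincar\'e inequalities finish. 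You correctly identify the remaining burden as the $n$-uniform, four-derivative-gaining estimate for the mode-wise Green's functions of $(\partial_2^2-n^2)^2$; this is genuinely where the work is, but it is a standard exponential-kernel computation. Two small cautions on your remarks: for $k=-1$ you need to be a bit careful about what the Fourier decomposition of an $H^{-1}(\Omega)$ function means mode by mode (this is easiest if one adopts $H^{-1}=(H^1_0)^*$ and tests against $e^{inx_1}\phi(x_2)$ with $\phi\in H^1_0(0,1)$), and your slicker ADN alternative only applies directly for $k\ge0$---for $k=-1,-2$ it must be combined with a duality argument or replaced by the variational step you already have, and the ADN inequality comes with a lower-order $\|\Psi\|_{L^2}$ term on the right that must be absorbed via the uniqueness you proved at $k=-2$.
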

Combining the above two lemmas, we obtain the following:
\begin{lemma}\label{rigorousnavier}
Let $f\in H^k$ for $k\ge -1$. Let $(v,q)$ be the unique solution to \eqref{steady_st}. Then 
\[
v=\nabla^\perp\Psi:=(-\partial_2\Psi,\partial_1\Psi)^T,
\] where $\Psi$ is the unique solution to
 \[
 \begin{cases}
 \Delta^{2}\Psi =\partial_1f_2-\partial_2f_1 & \text{ in $\Omega$},\\
 \Psi = \nabla\Psi = 0, &\text{ on $\partial\Omega$.}
 \end{cases}
 \]    
\end{lemma}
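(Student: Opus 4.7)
The plan is to recover $\Psi$ directly from the divergence-free vector field $v$, and then to derive the bilaplacian equation by taking the scalar curl of the Stokes momentum equation; uniqueness is then supplied by Lemma~\ref{lemma_psi_stokes}.

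First, I would construct $\Psi$ from $v$. By Lemma~\ref{Stokes_wellposed} the Stokes velocity $v$ belongs to $H^{k+2}_0(\Omega)$ and satisfies $\nabla\cdot v=0$. On the periodic strip $\Omega$ I set
\begin{equation*}
\Psi(x_1,x_2):=-\int_0^{x_2} v_1(x_1,s)\,ds,
\end{equation*}
so that $-\partial_2\Psi = v_1$ by construction, while the identity
\begin{equation*}
\partial_1\Psi = -\int_0^{x_2}\partial_1 v_1(x_1,s)\,ds = \int_0^{x_2}\partial_2 v_2(x_1,s)\,ds = v_2(x_1,x_2)
\end{equation*}
follows from $\nabla\cdot v=0$ together with $v_2|_{x_2=0}=0$. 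Hence $v = \nabla^\perp\Psi$, the function $\Psi$ is periodic in $x_1$ (inherited from $v$), and $v|_{\partial\Omega}=0$ forces $\nabla\Psi = 0$ on $\partial\Omega$.

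Next, I would apply the scalar curl $\partial_1(\cdot)_2 - \partial_2(\cdot)_1$ to the momentum equation in \eqref{steady_st}; this annihilates the pressure gradient and yields
\begin{equation*}
\Delta\bigl(\partial_1 v_2-\partial_2 v_1\bigr) = \partial_1 f_2-\partial_2 f_1 \quad\text{in }\Omega.
\end{equation*}
Since $v=\nabla^\perp\Psi$, the scalar vorticity is $\partial_1 v_2-\partial_2 v_1 = \Delta\Psi$, and the display reduces to $\Delta^2\Psi = \partial_1 f_2-\partial_2 f_1$. Combined with the boundary conditions obtained in the first step, Lemma~\ref{lemma_psi_stokes} then identifies $\Psi$ with the unique solution to that bilaplacian problem, completing the proof.

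The main delicate point is showing that $\Psi$ itself (and not merely $\nabla\Psi$) vanishes on both connected components of $\partial\Omega$. By construction $\Psi|_{x_2=0}=0$, whereas $\Psi|_{x_2=1} = -\int_0^1 v_1(x_1,s)\,ds$; differentiating in $x_1$, using $\nabla\cdot v=0$, and integrating $\partial_2 v_2$ with $v_2|_{\partial\Omega}=0$ shows this quantity is independent of $x_1$. To see this constant is actually zero, I would horizontally average the first component of \eqref{steady_st}: for the forcings relevant in the paper (in particular $f=(0,\rho)$ in \eqref{sB22vec}) the horizontal mean of $f_1$ vanishes, so the horizontal average of $v_1$ solves an ODE with homogeneous source and zero boundary data, hence vanishes identically, which in turn forces $\int_0^1 v_1\,dx_2 = 0$ and closes the normalization.
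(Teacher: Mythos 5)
Your proof takes a genuinely different route from the paper's. The paper defines $\Psi$ as the unique solution of the bilaplacian problem from Lemma~\ref{lemma_psi_stokes}, checks that $\nabla\times\Delta(v-\nabla^\perp\Psi)=0$, invokes simple connectedness of $\Omega$ to claim $\Delta(v-\nabla^\perp\Psi)$ is a gradient, and then concludes $v-\nabla^\perp\Psi\equiv0$ from the uniqueness in Lemma~\ref{Stokes_wellposed} applied to the resulting homogeneous Stokes problem. You go the opposite direction: you build $\Psi$ explicitly by integrating $v_1$ in $x_2$, derive $v=\nabla^\perp\Psi$ from incompressibility and the no-slip condition, obtain $\Delta^2\Psi=\partial_1f_2-\partial_2f_1$ by taking the scalar curl of the momentum equation, and only then invoke Lemma~\ref{lemma_psi_stokes} for uniqueness.

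The point you flag as delicate is in fact the heart of the matter, and your treatment exposes a real gap in the paper's own argument: $\Omega=\mathbb{T}\times(0,1)$ is \emph{not} simply connected, so the paper's appeal to simple connectedness is unjustified, and the obstruction is precisely the constant $\Psi|_{x_2=1}=-\int_0^1 v_1\,dx_2$. The lemma as stated is actually false for general $f$: take $f\equiv(1,0)$, for which the Stokes solution is the Poiseuille profile $v=(\tfrac12 x_2(x_2-1),0)$, $q\equiv0$, with $\int_0^1 v_1\,dx_2=-\tfrac1{12}\neq0$, while $\partial_1f_2-\partial_2f_1\equiv0$ forces $\Psi\equiv0$, contradicting $v=\nabla^\perp\Psi$. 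Your horizontal-averaging argument shows the obstruction vanishes exactly when the horizontal mean of $f_1$ vanishes, which covers every use of the lemma in the paper (there $f_1\equiv0$). So the proposal is sound, and in fact more careful than the paper's proof, for precisely the class of $f$ for which the statement holds; the one change I would make is to record the hypothesis $\int_{\mathbb{T}}f_1\,dx_1\equiv0$ explicitly as part of the lemma rather than leaving it implicit in ``the forcings relevant in the paper''.
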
    
 \begin{proof}
It is straightforward to see that 
 \[
\nabla\times \Delta (v-\nabla^\perp \Psi)=\nabla \times (f - \nabla^\perp \Delta\Psi) = \nabla \times f - \nabla\times \nabla^\perp\Delta \Psi=0.
 \]
 Since $\Omega$ is simply connected, the vector field $\Delta(v-\nabla^\perp \Psi)$ must be a gradient field. Therefore  there exists $p:\Omega\mapsto\mathbb{R}$ such that $(v-\nabla^\perp \Psi,p)$ solves
 \[
 \begin{cases}
 -\Delta (v-\nabla^\perp\Psi) =  -\nabla p,\quad \nabla\cdot (v-\nabla^\perp \Psi)=0, & \text{ in $\Omega$},\\
    v-\nabla^\perp\Psi = 0 & \text{ on $\partial \Omega$,}\\
    \int_{\Omega}p(x)dx=0.
    \end{cases}
 \]
Applying Lemma~\ref{Stokes_wellposed}, we conclude that $(v-\nabla^\perp \Psi,p)=(0,0)$ is the unique solution. In other words, $v=\nabla^\perp \Psi$.     
    \end{proof}
    
    Another simple application of Lemma~\ref{lemma_psi_stokes} is given below:
 \begin{lemma}\label{bilap}
For $f\in H^4(\Omega)$ such that  $f=\partial_2f =0$ on $\partial\Omega$ and $\int_{\Omega}f(x)dx=0$, it holds that
\[
\rVert f\rVert_{{H}^2(\Omega)} \le_C \rVert \Delta f\rVert_{L^2(\Omega)},\quad \rVert f\rVert_{H^4(\Omega)}\le_C \rVert \Delta^2f\rVert_{L^2(\Omega)}.
\]
\end{lemma}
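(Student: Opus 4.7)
The plan is to view $f$ itself as the unique solution of the biharmonic Dirichlet problem with right-hand side $\Delta^2 f$, and then to apply Lemma~\ref{lemma_psi_stokes} with two different choices of the regularity parameter $k$ to obtain the two inequalities.

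First I would verify that $f$ belongs to $H^2_0(\Omega)$, which is the natural domain of uniqueness in Lemma~\ref{lemma_psi_stokes}. The boundary of $\Omega = \mathbb{T}\times(0,1)$ consists of the two horizontal circles $\{x_2 = 0\}$ and $\{x_2 = 1\}$; since $f$ vanishes identically on each of them, the tangential derivative $\partial_1 f$ automatically vanishes on $\partial\Omega$ as well. Combined with the hypothesis $\partial_2 f = 0$ on $\partial\Omega$, this gives $\nabla f = 0$ on $\partial\Omega$, so $f \in H^2_0(\Omega)$, and $f$ is therefore the unique solution furnished by Lemma~\ref{lemma_psi_stokes} of $\Delta^2\Psi = \Delta^2 f$ with $\Psi = \nabla\Psi = 0$ on $\partial\Omega$.

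For the second inequality I would simply apply Lemma~\ref{lemma_psi_stokes} with $k = 0$ and $h = \Delta^2 f \in L^2(\Omega)$, which immediately produces $\|f\|_{H^4} \le C\|\Delta^2 f\|_{L^2}$. For the first inequality I would apply the same lemma with $k=-2$, after showing that $\|\Delta^2 f\|_{H^{-2}} \le \|\Delta f\|_{L^2}$. For any test function $\phi \in H^2_0(\Omega)$, a double integration by parts, in which all boundary contributions vanish because both $f$ and $\phi$ lie in $H^2_0(\Omega)$, yields
\[
\langle \Delta^2 f, \phi \rangle = \int_\Omega \Delta f\, \Delta\phi\, dx,
\]
so that $|\langle \Delta^2 f,\phi\rangle| \le \|\Delta f\|_{L^2}\|\phi\|_{H^2}$ by Cauchy-Schwarz. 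Hence $\|\Delta^2 f\|_{H^{-2}} \le \|\Delta f\|_{L^2}$, and Lemma~\ref{lemma_psi_stokes} with $k = -2$ closes the bound.

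I do not anticipate any genuine obstacle in this argument; the only subtle point is the duality interpretation of $\Delta^2 f$ as an element of $H^{-2}(\Omega)$, which is what allows the two derivatives appearing in $\Delta^2 f$ to be moved onto the test function to produce a bound in terms of $\|\Delta f\|_{L^2}$. Once that is in hand, both estimates are an immediate black-box application of the bilaplacian regularity lemma already recalled in the preliminaries.
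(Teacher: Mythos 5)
Your proposal is correct, and it handles the two inequalities by slightly different means than the paper. For the second inequality you do exactly what the paper does: observe that $f\in H^2_0(\Omega)$ (the tangential derivative $\partial_1 f$ vanishes automatically on the horizontal boundary because $f$ does, so $f=\partial_2 f=0$ on $\partial\Omega$ upgrades to $f=\nabla f=0$), identify $f$ as the unique biharmonic solution with data $\Delta^2 f\in L^2$, and read off $\|f\|_{H^4}\le_C\|\Delta^2 f\|_{L^2}$ from Lemma~\ref{lemma_psi_stokes} with $k=0$.

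For the first inequality your route differs from the paper's. The paper simply says ``integration by parts and the Poincar\'e inequality'': concretely, for $f\in H^2_0$ one has
\[
\|\Delta f\|_{L^2}^2 = \int_\Omega \bigl((\partial_{11}f)^2 + 2\partial_{11}f\,\partial_{22}f + (\partial_{22}f)^2\bigr)\,dx = \|D^2 f\|_{L^2}^2,
\]
the cross term having been integrated by parts twice with all boundary contributions killed by $f=\nabla f=0$ on $\partial\Omega$, and then Poincar\'e (applied to $f$ and $\nabla f$, both vanishing on $\partial\Omega$) gives $\|f\|_{H^2}\le_C\|D^2f\|_{L^2}=\|\Delta f\|_{L^2}$. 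You instead invoke Lemma~\ref{lemma_psi_stokes} at the endpoint $k=-2$, which yields $\|f\|_{H^2}\le_C\|\Delta^2 f\|_{H^{-2}}$, and then you bound the negative-Sobolev norm by duality: for $\phi\in H^2_0(\Omega)$, two integrations by parts (valid because $\phi=\nabla\phi=0$ on $\partial\Omega$) give $\langle\Delta^2 f,\phi\rangle=\int_\Omega\Delta f\,\Delta\phi\,dx$, hence $\|\Delta^2 f\|_{H^{-2}}\le\|\Delta f\|_{L^2}$. Both arguments are sound. Yours is a bit more uniform, since it treats the two inequalities as two instances of the same regularity lemma, at the cost of relying on the negative-index case $k=-2$ of that lemma and on the duality interpretation of $\Delta^2 f$; the paper's argument for the first inequality is more elementary and self-contained. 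A small remark in either case: the hypothesis $\int_\Omega f\,dx=0$ is not actually used, since the Poincar\'e inequality already follows from the zero boundary values.
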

\begin{proof}
The first estimate concerning the Laplacian operator follows trivially from integration by parts and the Poincar\'e inequality. For the second estimate, it is tautological that $f$ solves 
\[
\Delta^2f = \Delta^2 f \text{ in $\Omega$ and }f=\nabla f=0, \text{ on $\partial\Omega$.}
\]
Hence, Lemma~\ref{lemma_psi_stokes} gives $\rVert f\rVert_{H^4}\le_C \rVert \Delta^2 f\rVert_{L^2}$.
\end{proof}
 
Lastly, we  recall the Leray projection in a bounded domain.
\begin{lemma}\label{leray_pro}\cite[Proposition 1.8]{constantin1988navier}
Given a vector field $f\in L^2(\Omega)$, the differential equation for $(v,q)$ given by
\[
\begin{cases}
v=-\nabla q - f,\quad \nabla\cdot  v=0, & \text{ in $\Omega$}\\
v\cdot \vec{n} = 0,& \text{ on $\partial\Omega$}\\
\int_{\Omega}q(x)dx =0,
\end{cases}
\]
has a unique  solution $(v,q)$ satisfying
\[
\rVert v\rVert_{L^2} +\rVert q\rVert_{H^1}\le \rVert f\rVert_{L^2}.
\]
Given a vector field $f$, we denote  the solution $v$ by 
\begin{align}\label{leraty_pro}
\mathbb{P}f := v.
\end{align}
\end{lemma}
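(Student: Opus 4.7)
The plan is to realize $(v,q)$ as the two pieces of the orthogonal Helmholtz--Leray decomposition of $-f$ in $L^2(\Omega)$: I would first solve a Neumann-type variational problem for the scalar $q$, then recover $v$ by setting $v:=-\nabla q-f$, and finally verify the divergence-free condition, the boundary condition, the bounds, and uniqueness.

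\textbf{Step 1: variational problem for $q$ and definition of $v$.} On the Hilbert space $H:=\{\varphi\in H^1(\Omega):\int_\Omega\varphi\,dx=0\}$, equipped with the inner product $(\varphi,\psi)_H:=\int_\Omega\nabla\varphi\cdot\nabla\psi\,dx$ (equivalent to the $H^1$-norm by Poincar\'e--Wirtinger), the functional $\ell(\varphi):=-\int_\Omega f\cdot\nabla\varphi\,dx$ is linear with $|\ell(\varphi)|\le \rVert f\rVert_{L^2}\rVert\nabla\varphi\rVert_{L^2}$. Riesz representation produces a unique $q\in H$ such that
\[
\int_\Omega\nabla q\cdot\nabla\varphi\,dx=-\int_\Omega f\cdot\nabla\varphi\,dx\qquad\forall\,\varphi\in H^1(\Omega)
\]
(constants contributing trivially), together with $\rVert\nabla q\rVert_{L^2}\le \rVert f\rVert_{L^2}$ and hence $\rVert q\rVert_{H^1}\le C\rVert f\rVert_{L^2}$. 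Setting $v:=-\nabla q-f\in L^2(\Omega)$, the identity above becomes $\int_\Omega v\cdot\nabla\varphi\,dx=0$ for every $\varphi\in H^1(\Omega)$, which by the standard theory of the space $H(\mathrm{div},\Omega)$ encodes simultaneously the distributional assertion $\nabla\cdot v=0$ in $\Omega$ and $v\cdot\vec{n}=0$ on $\partial\Omega$ in the sense of the $H^{-1/2}(\partial\Omega)$ normal trace.

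\textbf{Step 2: $L^2$ bound on $v$ and uniqueness.} Because $v$ is divergence-free with vanishing normal trace, a density argument gives the orthogonality $\int_\Omega v\cdot\nabla q\,dx=0$. Pairing $v+\nabla q=-f$ with $v$ then yields $\rVert v\rVert_{L^2}^2=-\int_\Omega f\cdot v\,dx\le \rVert f\rVert_{L^2}\rVert v\rVert_{L^2}$, so $\rVert v\rVert_{L^2}\le\rVert f\rVert_{L^2}$. For uniqueness, the difference of two solutions $(v_1-v_2,q_1-q_2)$ obeys $v_1-v_2=-\nabla(q_1-q_2)$ with $v_1-v_2$ still divergence-free and of vanishing normal trace; the same orthogonality forces $v_1=v_2$ and then $\nabla(q_1-q_2)=0$, and the zero-mean normalization pins down $q_1=q_2$.

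\textbf{Main obstacle.} The only delicate point lies in Step 1: the passage from the weak identity $\int_\Omega v\cdot\nabla\varphi\,dx=0$ for all $\varphi\in H^1(\Omega)$ to the pair of pointwise statements $\nabla\cdot v=0$ in $\Omega$ and $v\cdot\vec{n}=0$ on $\partial\Omega$. Since $f$ is merely in $L^2$, one cannot literally take the divergence of $v=-\nabla q-f$ nor restrict $v\cdot\vec{n}$ to the boundary in the classical sense; both must be interpreted through the distributional divergence and the normal trace theorem for $H(\mathrm{div},\Omega)$, and the same functional-analytic machinery is what licenses the integration by parts $\int_\Omega v\cdot\nabla q\,dx=0$ invoked in Step 2. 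Once these standard preliminaries are in place, the rest of the argument is a direct Riesz-representation plus orthogonal-decomposition calculation.
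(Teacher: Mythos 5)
The paper does not prove this lemma; it cites it as a known result from Constantin--Foias (Proposition 1.8), so there is no in-paper proof to compare against. Your blind proof is a correct, self-contained derivation of the $L^2$ Helmholtz--Leray decomposition via the standard variational route. Two small remarks. First, once you have established $\int_\Omega v\cdot\nabla\varphi\,dx=0$ for every $\varphi\in H^1(\Omega)$, the orthogonality $\int_\Omega v\cdot\nabla q\,dx=0$ follows simply by choosing $\varphi=q$; no separate density argument is needed. Second, your construction actually yields the sharper Pythagorean identity $\rVert v\rVert_{L^2}^2+\rVert\nabla q\rVert_{L^2}^2=\rVert f\rVert_{L^2}^2$, from which $\rVert v\rVert_{L^2}\le\rVert f\rVert_{L^2}$ and $\rVert q\rVert_{H^1}\le C\rVert f\rVert_{L^2}$ follow with a Poincar\'e--Wirtinger constant; the paper's statement, read literally with constant $1$ in front of $\rVert q\rVert_{H^1}$, is a slight imprecision and should be understood up to a harmless multiplicative constant, as you obtain. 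Your identification of the delicate point (interpreting $\nabla\cdot v=0$ and $v\cdot\vec n=0$ via the $H(\mathrm{div},\Omega)$ normal-trace theorem, since $f$ is only $L^2$) is exactly the right thing to flag, and the rest of the argument is sound.
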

\section{Lyapunov functional in the Boussinesq system}\label{Lyapunsec}
In the rest of the paper, we denote $(\rho,u)$, solution to the Boussinesq system \eqref{Bouss}. Given the solution, let us define an auxiliary vector field $v$ as
\begin{align}\label{sBvec}
\begin{cases}
-\Delta v = -\nabla q -\colvec{0 \\ \rho}, \quad \nabla \cdot v=0 & \text{ in $\Omega$,}\\
v = 0 & \text{ on $\partial\Omega$.}
\end{cases}
\end{align} 
Note that the existence of such a vector field $v$ (also the pressure $q$) is ensured by Lemma~\ref{Stokes_wellposed}.  
Differentiating \eqref{sBvec} in time, we observe that $v_t$ satisfies
\begin{align}\label{sBvec23s22}
-\Delta v_t = -\nabla q_t -\colvec{0 \\ -\nabla\cdot (u \rho)},\quad 
\nabla\cdot v_t=0, \text{ in $\Omega$ and },
v_t = 0 \text{ on $\partial\Omega$.}
\end{align} Denoting
\begin{align}\label{wdef_1db}
w:= u -v, \quad F:=u\cdot\nabla u + v_t,
\end{align}
we see from the velocity equation in \eqref{Bouss}  that
\[
\Delta w = \Delta u - \Delta v = \nabla p + u\cdot\nabla u +u_t = \nabla p + u\cdot\nabla u + v_t + w_t = \nabla p + F + w_t.
\]
Combining this with the boundary condition and the incompressibility condition for $w$, we see that $w$ solves
\begin{align}\label{evol_w}
\begin{cases}
w_t  = - \nabla p - F +\Delta w, \quad \nabla \cdot w =0,  & \text{ in $\Omega$}\\
w = 0 & \text{ on $\partial \Omega$}.
\end{cases}
\end{align}
We define the following energy quantities:
 \[
 E_P(t):=\int (\rho(t)-\rho_0^*) x_2 dx,\quad E_K(t):=\frac{1}{2}\int |u(t)|^2dx,\quad E_T(t):= E_P(t)+ E_K(t),
  \]
  and 
  \[
  S(t):=\frac{1}{2}\int |w(t)|^2dx.
  \]
 It follows straightforwardly from \eqref{Bouss} that
\begin{align}
\frac{d}{dt}E_P(t) &= \int \rho_t x_2 dx=- \int u\cdot \rho x_2 dx= \int u_2 \rho dx,\label{Bou_po}\\
\frac{d}{dt}E_K(t) &= \int u\cdot u_t dx = -\int u_2\rho dx -\int |\nabla u|^2 dx.\label{Bou_Ki}
\end{align}
Summing them up, we obtain the conservation of the total energy:
\begin{align}\label{total_energy_inequality}
\frac{d}{dt}(E_P(t) + E_K(t)) = -\rVert \nabla u\rVert_{L^2}^2,\text{ thus  } E_T(t) + \int_{0}^t \rVert \nabla u\rVert_{L^2}^2 dt = E_T(0).
\end{align}

To derive an  estimate for $S(t)$, we need an  estimate for  $F$:
\begin{lemma}\label{Fst1}
There exists a universal constant $C>0$ such that
\[
\rVert F\rVert_{H^{-1}}\le C(\rVert \rho\rVert_{L^4} + \rVert u\rVert_{L^2})\rVert \nabla u\rVert_{L^2}.
\]
\end{lemma}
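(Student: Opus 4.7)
The plan is to split $F = u \cdot \nabla u + v_t$ and estimate each piece in $H^{-1}$.

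For the convective term, I use $\nabla \cdot u = 0$ to rewrite $u \cdot \nabla u = \nabla \cdot (u \otimes u)$. Pairing against an arbitrary test field $\varphi \in H^1_0(\Omega)$ and integrating by parts (no boundary contribution since $\varphi = 0$ on $\partial\Omega$) gives $|\langle u \cdot \nabla u, \varphi\rangle| \le \|u \otimes u\|_{L^2}\|\nabla \varphi\|_{L^2}$, hence $\|u\cdot\nabla u\|_{H^{-1}} \le C\|u\|_{L^4}^2$. Since $u = 0$ on $\partial\Omega$, the two-dimensional Ladyzhenskaya inequality gives $\|u\|_{L^4}^2 \le C\|u\|_{L^2}\|\nabla u\|_{L^2}$, producing the $\|u\|_{L^2}\|\nabla u\|_{L^2}$ contribution.

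For $v_t$, I use the steady Stokes problem \eqref{sBvec23s22} with forcing $f = (0,\,-\nabla\cdot(u\rho))^\top$ (using $\rho_t = -\nabla\cdot(u\rho)$). Applying Lemma~\ref{Stokes_wellposed} with $k=-1$ yields $\|v_t\|_{H^1} \le C\|f\|_{H^{-1}} \le C\|\nabla\cdot(u\rho)\|_{H^{-1}} \le C\|u\rho\|_{L^2}$, where the last bound again comes from integrating by parts against an $H^1_0$ test function. Hölder gives $\|u\rho\|_{L^2} \le \|u\|_{L^4}\|\rho\|_{L^4}$, and the two-dimensional Sobolev embedding $H^1_0 \hookrightarrow L^4$ combined with Poincar\'e (both available because $u$ vanishes on $\partial\Omega$) yields $\|u\|_{L^4} \le C\|\nabla u\|_{L^2}$. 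Since $v_t\in H^1_0(\Omega)$ one has the trivial chain $\|v_t\|_{H^{-1}} \le \|v_t\|_{L^2} \le \|v_t\|_{H^1}$, so altogether $\|v_t\|_{H^{-1}} \le C\|\rho\|_{L^4}\|\nabla u\|_{L^2}$.

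Adding the two estimates yields the stated inequality. I do not anticipate any real obstacle: the argument is a combination of (i) writing the nonlinearity in divergence form to absorb one derivative, (ii) $H^{-1}\to H^1$ regularity for the Stokes system applied to the time-differentiated equation for $v$, and (iii) standard two-dimensional Sobolev/Ladyzhenskaya interpolation, all of which are clean because of the no-slip condition.
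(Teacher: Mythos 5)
Your proof is correct and follows essentially the same route as the paper: split $F = u\cdot\nabla u + v_t$, estimate the convective part in $H^{-1}$ via divergence form and Ladyzhenskaya, and estimate $v_t$ by applying the $H^{-1}\to H^1$ Stokes regularity of Lemma~\ref{Stokes_wellposed} to the time-differentiated system \eqref{sBvec23s22} followed by H\"older and Poincar\'e. The only cosmetic difference is that you bound $\|v_t\|_{H^{-1}}$ through $\|v_t\|_{L^2}\le\|v_t\|_{H^1}$ while the paper writes $\|v_t\|_{H^{-1}}\le C\|v_t\|_{H^1}$ directly; the content is identical.
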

\begin{proof}
For any smooth test vector field $\varphi$, we have
\[
\int F\cdot \varphi dx = \int (u\cdot\nabla u )\cdot \varphi dx + \int v_t\cdot \varphi dx = -\int u\otimes u : \nabla \varphi dx + \int v_t\cdot \varphi dx,
\]
where the second equality is due to the integration by parts with the incompressibility and the boundary condition of $u$. Therefore, using the usual density argument of $C^{\infty}(\Omega)$ in $H^1(\Omega)$, we get
\[
\rVert F \rVert_{H^{-1}}\le \rVert u\otimes u\rVert_{L^2} + \rVert v_t\rVert_{H^{-1}}\le_C \rVert u\rVert_{L^4}^2 + \rVert v_t\rVert_{H^{-1}}.
\]
For the first term in the right-hand side, Ladyzhenskaya's inequality gives us that  
\begin{align}\label{estimate1p2}
\rVert u\rVert_{L^4}^2\le C \rVert u\rVert_{L^2} \rVert \nabla u\rVert_{L^2}\le C\rVert \nabla u\rVert_{L^2}^2,
\end{align}
where the last inequality follows from the Poincar\'e inequality.
We apply Lemma~\ref{Stokes_wellposed} to \eqref{sBvec23s22}, yielding that
\[
\rVert v_t\rVert_{H^{-1}}\le C \rVert v_t\rVert_{H^1}\le C \rVert \nabla\cdot (u\rho)\rVert_{H^{-1}}\le \rVert u\rho\rVert_{L^2}\le \rVert \rho\rVert_{L^4}\rVert u\rVert_{L^4}\le C\rVert \rho\rVert_{L^4}\rVert \nabla u \rVert_{L^2},
\]
where the last inequality is due to the estimate for $\rVert u\rVert_{L^4}$ obtained above.
Combining this with the first inequality in \eqref{estimate1p2}, we obtain the desired estimate for $F$.
\end{proof}

Now we are ready to prove the main result of this section, that is, a suitable linear combination of $S(t)$ and the total energy $E_T(t)$ is a Lyapunov functional.
\begin{proposition}\label{lyaypiunove}
There exist constants $C_1,C_2>0$ depending on $\rVert\rho_0\rVert_{L^4}$ and  $E_T(0)$ such that  
\[
\frac{d}{dt} \left(C_1E_T(t) + S(t)\right)\le -C_2\left(\rVert \nabla u\rVert_{L^2}^2 +\rVert \nabla w\rVert_{L^2}^2 + \rVert \nabla v\rVert_{L^2}^2 \right).
\]
\end{proposition}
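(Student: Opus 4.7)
The plan is to differentiate $S(t)$ using the equation \eqref{evol_w} for $w$, absorb the arising cross term via Lemma~\ref{Fst1}, and then add a large multiple of the total-energy identity \eqref{total_energy_inequality}. Since $w \in H^1_0$ and $\nabla\cdot w = 0$, testing \eqref{evol_w} against $w$ and integrating by parts gives
\[
\frac{d}{dt}S(t) = \int w\cdot w_t\,dx = \int w\cdot(-\nabla p - F + \Delta w)\,dx = -\|\nabla w\|_{L^2}^2 - \int w\cdot F\,dx,
\]
the pressure term vanishing because $\nabla\cdot w = 0$ and $w|_{\partial\Omega} = 0$.

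The next step is to estimate the cross term by duality, $|\int w\cdot F\,dx| \le \|\nabla w\|_{L^2}\|F\|_{H^{-1}}$, where $\|F\|_{H^{-1}}$ is controlled by Lemma~\ref{Fst1} in the form $C(\|\rho\|_{L^4} + \|u\|_{L^2})\|\nabla u\|_{L^2}$. To turn the prefactor into a time-independent constant I would use two \emph{a priori} facts: (i) $\rho$ is transported by a divergence-free field vanishing on $\partial\Omega$, so $\|\rho(t)\|_{L^4} = \|\rho_0\|_{L^4}$ for all $t$; and (ii) since $\rho(t)$ is a measure-preserving rearrangement of $\rho_0$ and $\rho_0^*$ is the vertical rearrangement in the sense of \eqref{verticla_re}, the standard rearrangement inequality gives $\int\rho(t) x_2\,dx \ge \int\rho_0^* x_2\,dx$, i.e.\ $E_P(t) \ge 0$, so that \eqref{total_energy_inequality} yields $\|u(t)\|_{L^2}^2 \le 2E_K(t) \le 2E_T(t) \le 2E_T(0)$. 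Setting $M := \|\rho_0\|_{L^4} + \sqrt{2E_T(0)}$ and applying Young's inequality,
\[
\Big|\int w\cdot F\,dx\Big| \le CM\|\nabla w\|_{L^2}\|\nabla u\|_{L^2} \le \tfrac{1}{2}\|\nabla w\|_{L^2}^2 + \tfrac{C^2M^2}{2}\|\nabla u\|_{L^2}^2,
\]
so that $\frac{d}{dt}S(t) \le -\tfrac{1}{2}\|\nabla w\|_{L^2}^2 + \tfrac{C^2M^2}{2}\|\nabla u\|_{L^2}^2$.

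Choosing $C_1 > C^2M^2/2 + 1$ and adding $C_1$ times \eqref{total_energy_inequality} yields
\[
\frac{d}{dt}\bigl(C_1E_T(t) + S(t)\bigr) \le -\|\nabla u\|_{L^2}^2 - \tfrac{1}{2}\|\nabla w\|_{L^2}^2.
\]
To recover the $\|\nabla v\|_{L^2}^2$ piece, use $v = u - w$ and the triangle inequality to get $\|\nabla v\|_{L^2}^2 \le 2\|\nabla u\|_{L^2}^2 + 2\|\nabla w\|_{L^2}^2$, which rearranges to $\|\nabla u\|_{L^2}^2 + \|\nabla w\|_{L^2}^2 \ge \tfrac{1}{3}(\|\nabla u\|_{L^2}^2 + \|\nabla w\|_{L^2}^2 + \|\nabla v\|_{L^2}^2)$. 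Taking $C_2 := 1/6$ then produces the stated inequality with $C_1, C_2$ depending only on $\|\rho_0\|_{L^4}$ and $E_T(0)$.

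The main obstacle is the cross term $\int w\cdot F\,dx$: since $F$ contains the quadratic $u\cdot\nabla u$, an $L^2$-based bound on $F$ would cost extra derivatives of $u$ that the viscous dissipation cannot absorb. The decisive gain is to pair $F\in H^{-1}$ against $w\in H^1_0$, which transfers one derivative onto $w$ (done inside Lemma~\ref{Fst1} via integration by parts of the convective term); the remainder is then closed by the conservation of $\|\rho\|_{L^4}$ and the a priori $L^2$ bound on $u$ furnished by the nonnegativity of $E_P$.
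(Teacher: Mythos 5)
Your proof is correct and follows essentially the same route as the paper: differentiate $S$, kill the pressure term via incompressibility, bound $\int w\cdot F$ by $\|\nabla w\|_{L^2}\|F\|_{H^{-1}}$ and Lemma~\ref{Fst1}, absorb $\tfrac12\|\nabla w\|_{L^2}^2$ by Young, then add a large multiple of \eqref{total_energy_inequality} and recover $\|\nabla v\|_{L^2}^2$ by the triangle inequality. The only (welcome) amplification is that you spell out why $\|u(t)\|_{L^2}$ is uniformly controlled by $E_T(0)$, namely $E_P(t)\ge 0$ via the rearrangement inequality — a point the paper uses but leaves implicit in the line ``combining this with the energy conservation.''
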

\begin{proof}
Let us estimate the evolution of $S(t)$, using the equation for $w$ in \eqref{evol_w}:
\[
\frac{d}{dt}S(t) = \int w w_t dx = \int F \cdot w dx + \int w\cdot \Delta wdx\le \rVert \nabla w\rVert_{L^2}\rVert F\rVert_{H^{-1}} -\rVert \nabla w\rVert_{L^2}^2\le C \rVert F \rVert_{H^{-1}}^2 - \frac{1}{2}\rVert \nabla w\rVert_{L^2}^2,
\]
where the first inequality is due to the Poincar\'e inequality, $\rVert w\rVert_{H^1}\le_C\rVert\nabla w\rVert_{L^2}^2$, and  we used the Cauchy-Schwarz inequality in the last inequality. Then Lemma~\ref{Fst1} gives us
\begin{align*}
\frac{d}{dt}S(t)&\le  C(\rVert \rho(t)\rVert_{L^4} + \rVert u(t)\rVert_{L^2})^2 \rVert \nabla u\rVert_{L^2}^2 - C\rVert \nabla w\rVert_{L^2}^2\\
&\le C(1+\rVert \rho(t)\rVert_{L^4} + \rVert u(t)\rVert_{L^2})^2 \rVert \nabla u\rVert_{L^2}^2 - C\left(\rVert \nabla w\rVert_{L^2}^2+\rVert \nabla v\rVert_{L^2}^2 \right),
\end{align*}
where the last inequality is due to the following elementary application of the triangular inequality, 
\[
\rVert \nabla w\rVert_{L^2}^2\ge C(\rVert \nabla w\rVert_{L^2}^2 + \rVert \nabla(w-u)\rVert_{L^2}^2) - C\rVert \nabla u\rVert_{L^2}^2 \ge C(\rVert \nabla w\rVert_{L^2}^2 + \rVert \nabla v\rVert_{L^2}^2) - C\rVert \nabla u\rVert_{L^2}^2.
\]
Since $\rho$ is transported by  incompressible flows, we have a conservation of $\rVert\rho(t)\rVert_{L^p}$ for any $p\in [1,\infty]$. Especially we have $\rVert \rho(t)\rVert_{L^4}=\rVert \rho_0\rVert_{L^4}$.  Combining this with the energy conservation \eqref{total_energy_inequality}, we arrive at
\[
\frac{d}{dt}S(t)\le C(1+\rVert \rho_0\rVert_{L^4}+E_T(0))^2 \rVert \nabla u\rVert_{L^2}^2 - C\left(\rVert \nabla w\rVert_{L^2}^2+\rVert \nabla v\rVert_{L^2}^2\right).
\]
Therefore, we can choose $C_1$ large enough depending on $\rVert \rho_0\rVert_{L^4}$ and $E_T(0)$ so that the above inequality and \eqref{total_energy_inequality} give us
\begin{align*}
\frac{d}{dt}\left( C_1 E_T(t) + S(t)\right) &\le -\left(C_1 - C(1+\rVert \rho_0\rVert_{L^2}+E_T(0))^2\right)\rVert \nabla u\rVert_{L^2}^2 - C\left(\rVert \nabla w\rVert_{L^2}^2+\rVert \nabla v\rVert_{L^2}^2\right) \\
& \le -C_2\left(\rVert \nabla u\rVert_{L^2}^2  + \rVert \nabla w\rVert_{L^2}^2+\rVert \nabla v\rVert_{L^2}^2\right),
\end{align*}
for some constant $C_2>0$, which also depend only on  $\rVert \rho_0\rVert_{L^4}$ and $E_T(0)$. This finishes the proof.
\end{proof}
\section{Long-time  instability for an arbitrary  steady state}\label{ins}
In this section, we investigate an application of the Lyapunov functional derived in the earlier section yielding that a long-time instability of an arbitrary steady state.
 The main strategy is based on the proof of instability for the incompressible porous media equation in \cite[Section 4,5]{kiselev2023small} and its application to the Stokes transport system in \cite[Section 3]{leblond2023well}. 
  
   We first recall the definition of \textit{bubble type} density profile, which possesses a simple closed regular level curve.
   \begin{definition}\cite[Definition 3.7.3]{leblond2023well}
   A function $f:\Omega\mapsto \mathbb{R}$ is said to be of {bubble type}, if there exists a closed curve $\Gamma_0\subset \Omega$ enclosing a simply connected set such that $f$ is constant on $\Gamma_0$ and $\inf_{x\in\Gamma_0}|\nabla f| >0$.
   \end{definition}
   Noting that $H^{2-\gamma}$ for any $\gamma>0$ fails to embed into $C^1$, it is possible to design an arbitrarily small perturbation of any stratified density $\rho_s$ in $H^{2-\gamma}$ destroying its stratified level set structure. More precisely, we recall the following:
   \begin{lemma}\cite[Lemma 3.7.7]{leblond2023well}\label{bublel}
   For any stratified density $\rho_s\in C^\infty(\Omega)$ and $\epsilon,\gamma>0$, there exists $\rho_0\in C^\infty(\Omega)$ such that $\rVert \rho_s-\rho_0\rVert_{H^{2-\gamma}}\le \epsilon$ and $\rho_0$ is of bubble type.
   \end{lemma}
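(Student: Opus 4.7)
The plan is to construct $\rho_0$ by adding to $\rho_s$ a sharply localized smooth bump $\phi$ that produces a new strict local maximum of $\rho_s+\phi$, while costing almost nothing in $H^{2-\gamma}$. The underlying principle is that in two dimensions $H^{2-\gamma}$ fails to embed into $C^1$ whenever $\gamma>0$, so by scaling one can fit bumps whose amplitude is comparable to the local oscillation of $\rho_s$ across their support while keeping the $H^{2-\gamma}$ cost arbitrarily small. Once such a bump creates an isolated strict maximum with nonvanishing gradient on nearby level sets, the existence of the desired curve $\Gamma_0$ follows from Morse-theoretic and Sard-type considerations.

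Concretely, I would fix an interior point $x_0\in\Omega$ and choose a nonnegative radial $\psi\in C_c^\infty(\R^2)$ supported in the unit ball, with $\psi(0)=1$ and with a strictly nondegenerate maximum at the origin (for instance a suitable mollification of $(1-|y|^2)_+$). For $\delta\in(0,1)$ small enough that $\overline{B_\delta(x_0)}\subset\Omega$, set
\[
\phi_\delta(x):= K\delta\,\psi\!\left(\frac{x-x_0}{\delta}\right),\qquad K:=1+\|\partial_2\rho_s\|_{L^\infty}.
\]
Since $\nabla\phi_\delta(x)=K(\nabla\psi)((x-x_0)/\delta)$ and $D^2\phi_\delta(x)=(K/\delta)(D^2\psi)((x-x_0)/\delta)$, the standard $L^2$ scaling in $\R^2$ gives $\|\phi_\delta\|_{L^2}\lesssim K\delta^2$, $\|\phi_\delta\|_{\dot H^1}\lesssim K\delta$ and $\|\phi_\delta\|_{\dot H^2}\lesssim K$, with implicit constants depending only on $\psi$. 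Interpolating with the convex combination $2-\gamma=\gamma\cdot 1+(1-\gamma)\cdot 2$ yields $\|\phi_\delta\|_{\dot H^{2-\gamma}}\lesssim K\delta^\gamma$, so $\|\phi_\delta\|_{H^{2-\gamma}(\Omega)}\lesssim K\delta^\gamma\to 0$ as $\delta\to 0$. Choosing $\delta$ sufficiently small thus ensures that $\rho_0:=\rho_s+\phi_\delta\in C^\infty(\Omega)$ satisfies $\|\rho_0-\rho_s\|_{H^{2-\gamma}}\le\epsilon$.

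It then remains to verify that $\rho_0$ is of bubble type. Because $\phi_\delta\equiv 0$ on $\partial B_\delta(x_0)$ while $\phi_\delta(x_0)=K\delta$, the mean value bound $|\rho_s(x)-\rho_s(x_0)|\le\|\partial_2\rho_s\|_{L^\infty}\delta$ on $\overline{B_\delta(x_0)}$, together with the choice of $K$, gives $\rho_0(x_0)>\max_{\partial B_\delta(x_0)}\rho_0$, so that $\rho_0$ attains its maximum over $\overline{B_\delta(x_0)}$ at an interior point $x_\star$. At $x_\star$ the Hessian of $\phi_\delta$ equals $(K/\delta)(D^2\psi)((x_\star-x_0)/\delta)$, and since $x_\star\to x_0$ as $\delta\to 0$ this term dominates the uniformly bounded Hessian of $\rho_s$; hence $x_\star$ is a nondegenerate local maximum of $\rho_0$ for $\delta$ small. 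By the Morse lemma, for every regular value $c<\rho_0(x_\star)$ sufficiently close to $\rho_0(x_\star)$ the connected component of $\{\rho_0\ge c\}$ containing $x_\star$ is a topological disk whose boundary $\Gamma_0$ is a smooth simple closed curve along which $\nabla\rho_0\neq 0$; compactness of $\Gamma_0$ yields $\inf_{\Gamma_0}|\nabla\rho_0|>0$, as required.

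The main obstacle is the topological step, i.e.\ ensuring that the selected superlevel set is genuinely a simply connected region bounded by a single smooth simple closed curve. This is why I would insist that $\psi$ have a nondegenerate maximum at the origin and why the Morse lemma is applied on the top layer of the superlevel set near $x_\star$. The remaining ingredients—the scaling bounds for $\phi_\delta$, the interpolation in $H^{2-\gamma}$, and Sard's theorem to select a regular value $c$—are routine once the localized bump has been set up.
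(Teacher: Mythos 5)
Your overall construction---adding a localized bump $\phi_\delta$ whose $H^{2-\gamma}$ cost shrinks by scaling while its amplitude dominates the oscillation of $\rho_s$ on $B_\delta(x_0)$, so that $\rho_0=\rho_s+\phi_\delta$ develops an interior local maximum---is the right route, and the scaling and interpolation bounds (for $\gamma\in(0,1]$, which is without loss of generality) are correct. This is the same strategy as the cited Lemma 3.7.7 in \cite{leblond2023well}; the paper itself gives no independent proof of the statement.

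There is, however, a genuine gap in the Morse-lemma step. You write that ``since $x_\star\to x_0$ as $\delta\to 0$, the Hessian $(K/\delta)(D^2\psi)((x_\star-x_0)/\delta)$ dominates $D^2\rho_s$.'' But $x_\star\to x_0$ is automatic simply because $x_\star\in B_\delta(x_0)$; it does \emph{not} imply that the rescaled position $y_\star:=(x_\star-x_0)/\delta$ tends to $0$. From the bound $\rho_0(x_\star)\ge\rho_0(x_0)$ one only gets $\psi(y_\star)\ge 1-\|\nabla\rho_s\|_{L^\infty}/K$, which with your choice $K=1+\|\partial_2\rho_s\|_{L^\infty}$ leaves $y_\star$ free to sit well away from the origin, where $D^2\psi$ need not be negative definite (for any radial bump, the radial second derivative $\tilde\psi''$ changes sign on its support). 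So the asserted nondegeneracy of $x_\star$ is not justified.

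The gap is harmless because the nondegeneracy and the Morse lemma are not actually needed. Let $m:=\max_{\partial B_\delta(x_0)}\rho_0$ and $M:=\max_{\overline{B_\delta(x_0)}}\rho_0$; your comparison gives $M>m$. By Sard's theorem, pick a regular value $c\in(m,M)$. Then $\Gamma:=\{\rho_0=c\}\cap\overline{B_\delta(x_0)}$ is nonempty (intermediate value theorem along a path from $x_\star$ to $\partial B_\delta(x_0)$), avoids $\partial B_\delta(x_0)$ (there $\rho_0\le m<c$), and is a compact $1$-manifold without boundary contained in $B_\delta(x_0)\subset\subset\Omega$, hence a finite disjoint union of embedded circles. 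Any connected component $\Gamma_0$ of $\Gamma$ is a smooth simple closed curve contained in a Euclidean ball, so by the Jordan--Schoenflies theorem it encloses a simply connected region; $\rho_0\equiv c$ on $\Gamma_0$, and since $c$ is a regular value and $\Gamma_0$ is compact, $\inf_{\Gamma_0}|\nabla\rho_0|>0$. If you prefer to keep the Morse-lemma route, an alternative fix is to enlarge $K$ (say $K=N(1+\|\partial_2\rho_s\|_{L^\infty})$ with $N$ large depending only on $\psi$) so that $\psi(y_\star)\ge 1-1/N$ forces $y_\star$ into a fixed small neighborhood of $0$ where $D^2\psi$ is uniformly negative definite; $\delta$ can then still be chosen small afterwards since $\|\phi_\delta\|_{H^{2-\gamma}}\lesssim K\delta^{\gamma}$.
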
   
   
   The following lemma addresses a uniform lower bound of an anisotropic norm of solutions to a general transport equation when the initial data is of bubble type. While the detailed proof provided in \cite{kiselev2023small} is presented in the context of the IPM equation, the proof remains essentially the same. Therefore, we will omit the proof here.
 \begin{lemma}\cite[Proposition 4.1, Corollary 4.2]{kiselev2023small}\label{kislem} Let $f$ be a  solution to the transport equation
 \[
 f_t + U\cdot\nabla f=0,\quad f(0,x)=f_0(x),
 \]
 for some smooth divergence free vector field $U$. If $f_0$ is a smooth function of bubble type, then  there exists a positive constant $C=C(f_0)$ such that
 \[
 \rVert \partial_1f(t)\rVert_{L^2}\ge C,\text{ for all $t>0$}.
 \]
 \end{lemma}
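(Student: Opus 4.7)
The plan is to combine volume preservation of the transport flow with a coarea/isoperimetric estimate on the preserved level sets. Let $\Phi_t$ denote the flow generated by $U$, which is a measure-preserving diffeomorphism of $\Omega$ since $\nabla\cdot U=0$; then $f(t,\Phi_t(x))=f_0(x)$, and the super-level sets $\{f(t,\cdot)>s\}=\Phi_t(\{f_0>s\})$ are transported rigidly. Let $c$ be the value of $f_0$ on $\Gamma_0$ and $D_0$ the simply connected region it encloses. Using $|\nabla f_0|>0$ on $\Gamma_0$ together with smoothness, I pick $\delta>0$ small enough that for every $s\in(c-\delta,c+\delta)$ the set $\Gamma_0^s:=\{f_0=s\}$ is still a smooth closed Jordan curve enclosing a simply connected region $D_0^s$, with $|D_0^s|$ bounded below uniformly and $\int_{\Gamma_0^s}|\nabla f_0|^{-1}\,d\mathcal{H}^1$ bounded above uniformly. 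These properties pass to $\Gamma_t^s:=\Phi_t(\Gamma_0^s)=\{f(t,\cdot)=s\}$ and $D_t^s:=\Phi_t(D_0^s)$; crucially, the distribution function $s\mapsto |\{f(t,\cdot)>s\}|$ is time-independent.

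Next I would establish a purely geometric lower bound. Since $\Gamma_t^s$ is contractible in $\mathbb{T}\times(0,1)$ (contractibility is preserved by the diffeomorphism $\Phi_t$), it lifts to a closed Jordan curve in $\mathbb{R}\times(0,1)$ enclosing a region of horizontal extent at most $2\pi$. The identity $1=\partial_1 x_1$ and the divergence theorem on the lift give
\[
|D_0^s|=|D_t^s|=\int_{\Gamma_t^s} x_1\,\nu_1\,d\mathcal{H}^1\ \le\ 2\pi\int_{\Gamma_t^s}|\nu_1|\,d\mathcal{H}^1,
\]
where $\nu$ is the outward unit normal. Along $\Gamma_t^s$ the vector $\nu$ is collinear with $\nabla f(t,\cdot)/|\nabla f(t,\cdot)|$, so $|\nu_1|=|\partial_1 f(t)|/|\nabla f(t)|$. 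Applying Cauchy--Schwarz to the factorization $|\nu_1|=\bigl(|\partial_1 f|\,|\nabla f|^{-1/2}\bigr)\cdot |\nabla f|^{-1/2}$ yields
\[
\bigl(|D_0^s|/(2\pi)\bigr)^2\ \le\ \Bigl(\int_{\Gamma_t^s}\frac{(\partial_1 f(t))^2}{|\nabla f(t)|}\,d\mathcal{H}^1\Bigr)\Bigl(\int_{\Gamma_t^s}|\nabla f(t)|^{-1}\,d\mathcal{H}^1\Bigr).
\]

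Finally, the coarea formula identifies the first bracket as the $s$-slice of $\|\partial_1 f(t)\|_{L^2}^2$, while the second bracket equals $-\tfrac{d}{ds}|\{f(t,\cdot)>s\}|$, which by preservation of the distribution function equals $\int_{\Gamma_0^s}|\nabla f_0|^{-1}\,d\mathcal{H}^1$ and is therefore a time-independent, integrable-in-$s$ quantity depending only on $f_0$. Integrating the preceding inequality over $s\in(c-\delta,c+\delta)$ yields
\[
\|\partial_1 f(t)\|_{L^2}^2\ \ge\ \int_{c-\delta}^{c+\delta}\frac{(|D_0^s|/(2\pi))^2}{\int_{\Gamma_0^s}|\nabla f_0|^{-1}\,d\mathcal{H}^1}\,ds\ =:\ C(f_0)^2>0,
\]
uniformly in $t$, which is the desired bound. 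I expect the main obstacle to be precisely the loss of pointwise control on $|\nabla f(t)|$ along level curves---the flow can steepen the profile indefinitely, so no direct argument works. The Cauchy--Schwarz step above is the device that converts this into cancellation: the dangerous growing factor $|\nabla f(t)|$ appears once with each sign, and its total weight $\int_{\Gamma_t^s}|\nabla f(t)|^{-1}\,d\mathcal{H}^1$ is a conserved quantity of the transport because $f$'s distribution function is.
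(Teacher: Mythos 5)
Your overall outline (push the level curves by the area-preserving flow, lower-bound a geometric quantity on each level curve by the preserved enclosed area, then combine coarea and Cauchy--Schwarz with the conserved level-set measure) is the right one, and the Cauchy--Schwarz device to trade $|\nabla f(t)|$ against $|\nabla f(t)|^{-1}$ is indeed the crux. But there is a concrete error in the geometric step.

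You claim that because $\Gamma_t^s$ is contractible in $\mathbb{T}\times(0,1)$ it lifts to a closed curve in $\mathbb{R}\times(0,1)$ enclosing a region of \emph{horizontal} extent at most $2\pi$, and then you apply the divergence theorem with the vector field $(x_1,0)$ on the lift. That statement is false: a simple closed contractible curve in the cylinder can lift to a curve of arbitrarily large horizontal extent. For instance, take a thin ``spiral snake'' whose lift runs from $x_1=0$ out past $x_1=2\pi N$ at slowly increasing heights and then returns at slightly different heights; the region it bounds is a topological disk in the cylinder, the curve is simple and contractible, yet the lift has horizontal extent comparable to $2\pi N$. Since $x_1$ is not a globally defined function on $\mathbb{T}\times(0,1)$, there is no way to apply the divergence theorem in the horizontal direction without first controlling the winding of the lift, and nothing in the transport problem gives you that control. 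The area constraint $|D_t^s|\le 2\pi$ does not help either: a thin spiral has small area but large horizontal extent.

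The inequality you need, $\int_{\Gamma_t^s}|\nu_1|\,d\mathcal H^1\gtrsim |D_0^s|$, is nonetheless true, but one must argue via the \emph{vertical} direction, where coordinates are global. Parameterizing $\Gamma_t^s$ by arc length, $\int_{\Gamma_t^s}|\nu_1|\,d\mathcal H^1=\int|\gamma_2'|\,d\ell$ is the total variation of $x_2$ along the closed curve, hence at least $2\,\mathrm{osc}_{\Gamma_t^s}\,x_2$. On the other hand $D_t^s$ is contained in the horizontal strip $\mathbb{T}\times[\min_{\Gamma_t^s}x_2,\max_{\Gamma_t^s}x_2]$, whose area is $2\pi\,\mathrm{osc}_{\Gamma_t^s}\,x_2$. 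Therefore
\[
\int_{\Gamma_t^s}|\nu_1|\,d\mathcal H^1\ \ge\ 2\,\mathrm{osc}_{\Gamma_t^s}\,x_2\ \ge\ \frac{|D_t^s|}{\pi}\ =\ \frac{|D_0^s|}{\pi},
\]
which is even slightly stronger than your $|D_0^s|/(2\pi)$. With this replacement, your Cauchy--Schwarz step, the coarea identification of the two factors, the time-independence of $\int_{\Gamma_t^s}|\nabla f(t)|^{-1}$ (a consequence of the distribution function being conserved), and the final $s$-integration over $(c-\delta,c+\delta)$ all go through and yield the desired uniform-in-time lower bound. So the proposal is fixable by replacing the horizontal divergence-theorem estimate with the vertical oscillation estimate; as written, it contains a genuine gap. (Two minor remarks: the coarea integrals should be taken over the full level set $\{f(t)=s\}$, with the restriction to the component $\Gamma_t^s$ entering only as a lower bound; and the finiteness/uniformity of $\int_{\Gamma_0^s}|\nabla f_0|^{-1}$ for $s$ near $c$ should be justified from $\inf_{\Gamma_0}|\nabla f_0|>0$ via the implicit function theorem.)
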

 Now,  the proof of Theorem~\ref{instability} follows straightforwardly.
 \begin{proof}[Proof of Theorem~\ref{instability}]
 Let $\rho_s$ be a smooth stratified density and let $\gamma,\epsilon>0$ be fixed. Let us fix a smooth initial velocity $u_0$ such that $\nabla\cdot u_0=0$ and $u_0=0$ on $\partial\Omega$. Thanks to Lemma~\ref{bublel}, we can find a bubble type smooth initial density $\rho_0$ such that $\rVert \rho_s-\rho_0\rVert_{H^{2-\gamma}}<\epsilon$. Since the initial data $(\rho_0,u_0)$, the Boussinesq system~\ref{Bouss} admits a unique smooth solution $(\rho(t),u(t))$, $\rho(t)$ is transported by a smooth divergence free vector field. Then Lemma~\ref{kislem} tells us that there exists a constant $C=C(\rho_0)>0$ such that 
 \begin{align}\label{unifformsd2}
 \rVert \partial_1\rho(t)\rVert_{L^2}\ge C,\text{ for all $t>0$.}
 \end{align}
 Moreover, Proposition~\ref{lyaypiunove} yields that for some constants $C_1,C_2$ depending on the initial data, we have 
 \begin{align}\label{energy_b}
 C_1E_T(t)+S(t) + C_2\int_{0}^t \rVert \nabla u(s)\rVert_{L^2}^2 +\rVert \nabla w(s)\rVert_{L^2}^2 +\rVert \nabla v(s)\rVert_{L^2}^2ds \le \underbrace{C_1E_T(0) + S(0)}_{=: C(\rho_0,u_0)}, \text{ for all $t>0$.}
 \end{align}
 Taking $t\to\infty$, we obtain
 \[
 \int_{0}^{\infty}\rVert \nabla v(s)\rVert_{L^2}^2  ds \le C(\rho_0,u_0).
 \]
 This implies that
 \begin{align}\label{limsup1}
 \liminf_{t\to\infty}t\rVert \nabla v(t)\rVert_{L^2}^2 = 0.
 \end{align} 
 Applying Lemma~\ref{rigorousnavier} to the steady Stokes equation \eqref{sBvec}, we observe that the velocity $v$ admits a stream function $\Psi$ such that $v=\nabla^\perp \Psi$ and
  \begin{align}\label{streamv2}
  \begin{cases}
  \Delta^2 \Psi = \partial_1 \rho & \text{ in $\Omega$}\\
  \Psi=\nabla \Psi = 0  & \text{ on $\partial\Omega$.}
  \end{cases}
  \end{align}
 Therefore,  the Gagliardo-Nirenberg interpolation inequality give us
 \[
 \rVert \partial_1\rho\rVert_{L^2}\le C \rVert \Delta^2\Phi\rVert_{L^2}\le C\rVert \Delta \Phi\rVert_{L^2}^{k/(k+2)}\rVert \Delta^2\Phi\rVert_{H^{k}(\Omega)}^{2/(k+2)}\le C\rVert \nabla v\rVert_{L^2}^{k/(k+2)}\rVert \partial_1\rho\rVert_{H^k}^{2/(k+2)},\text{ for all $k> 0$.}
 \]
 Combining this with \eqref{unifformsd2}, we obtain $
 \rVert \partial_1\rho(t) \rVert_{H^k}\ge C \rVert \nabla v(t)\rVert_{L^2}^{-k/2}$. Together with \eqref{limsup1}, we conclude that 
 \[
 \limsup_{t\to \infty} t^{-k/4} \rVert \partial_1\rho(t) \rVert_{H^k} \ge \limsup_{t\to\infty}\left(t\rVert \nabla v\rVert_{L^2}^2 \right)^{-k/4}=\infty, \text{ for all $k>0$,}
 \]
 which proves the theorem. 
 \end{proof}


\section{Asymptotic stability for vertically decreasing stratified density}\label{assssss}
In this section, we aim to prove asymptotic stability of hydrostatic steady states $({\rho}_s,0)$, where ${\rho}_s(x)={\rho}_s(x_2)$ is independent of the horizontal variable $x_1$. As stated in Theorem~\ref{main_Boussinesq}, we will  assume that $\rho_s$ satisfies \eqref{steadtysd2sd} throughout the section.  We also denote $C$ a positive constant that depends on only $\rho_s$, especially on $\rVert \partial_2\rho_s\rVert_{H^4}<\infty$ but the constant $C$  might vary from line to line. 

 Let us denote 
 \begin{align}\label{notationeavesd}
 \theta(t):=\rho(t) - {\rho}_s.
 \end{align}  In terms of $\theta$, we can rewrite \eqref{Bouss} as 
\begin{align}\label{Boussiesq_pertur}
\begin{cases}
\theta_t + u\cdot \nabla \theta = -\partial_2{\rho}_s u_2,\\
u_t + u\cdot\nabla u = -\nabla p - \colvec{0 \\ \theta}+ \Delta u,\\
\nabla\cdot u =0,
\end{cases}
\end{align}
where the pressure is suitably replaced using that $(0, {\rho}_s)^{T}$ is a gradient field.
Similarly, we can rewrite the steady Stokes equation for the  auxiliary vector field $v$  in \eqref{sBvec}  as
\begin{align}\label{sBvec1}
-\Delta v = -\nabla q -\colvec{0 \\ \theta}\text{ and } \nabla\cdot v=0 \text{ in $\Omega$},\quad v = 0 \text{ on $\partial\Omega$.}
\end{align} 
If the velocity $u$ is replaced by $v$ in the first equation in \eqref{Boussiesq_pertur} (i.e., the density $\rho$ in \eqref{Bouss} is transported by the vector field $v$ instead of $u$), the resulting active scalar equation is the so-called Stokes transport system, for which asymptotic stability was  proved near a specific equilibrium $\rho_s(x_2)=1-x_2$ in \cite[Theorem 1.1]{dalibard2023long}. The difference between $u$ and $v$ is obviously encoded in the quantity $w$ defined in \eqref{wdef_1db}. Hence before getting into detailed analysis for asymptotic stability, let us  derive some  estimates  for $w$ in  the next subsection.

\subsection{Estimates for derivatives of $w$}  Due to the presence of the boundary in the domain $\Omega$ and the viscous nature  of the fluid, we do not seek to estimate high Sobolev norms directly by taking spatial derivatives in the evolution equation. Instead, following the classical approach for the regularity theory of the Navier-Stokes equations in bounded domains (e.g. \cite[Chapter 5]{galdi2000introduction}), we study the time derivatives of $w$.

 Taking derivatives in time in \eqref{evol_w}, we observe that
\begin{align}\label{wtt_1}
w_{tt}  = - \nabla p_t - F_t +\Delta w_t, \quad \nabla \cdot w_t =0, \  \text{ in $\Omega$} \text{ and }w_t = 0  \text{ on $\partial \Omega$},
\end{align}
and
\begin{align}\label{wtt_2}
w_{ttt}  = - \nabla p_{tt} - F_{tt} +\Delta w_{tt}, \quad \nabla \cdot w_{tt} =0,\   \text{ in $\Omega$}, \text{ and }w_{tt} = 0  \text{ on $\partial \Omega$}.
\end{align}
 
  We will first prove several lemmas showing that the time derivatives of the vector vields can actually control spatial derivatives of vector fields, if we have enough information of $F$. The necessary estimates for $F$  will be studied afterwards.

  Also note that because our main interest lies in the solutions sufficiently close to the equilibrium $(\rho_s,0)$, we will only focus on $\theta$ and $u$ that are sufficiently small, which reduces the complication in computations.  More precisely, we will assume  that 
  \begin{align}\label{size_assu}
\rVert w_{tt}(t)\rVert_{L^2}^2 +\rVert w_t(t)\rVert_{L^2}^2+ \rVert u(t)\rVert_{H^4}^2+\rVert \theta(t)\rVert_{H^4}^2 + \int_0^t \rVert\partial_1\Delta \theta\rVert_{L^2}^2 +\rVert u(t)\rVert_{H^5}^2+\rVert w(t)\rVert_{H^5}^2 dt\le \delta\le \delta_0(\rho_s)\ll 1,
\end{align}
 for $t\le [0,T]$ for some $T>0$ and for some $\delta_0>0$ that depends only on $\rho_s$.
We emphasize that the implicit constant $C$ that will appear throughout the proofs will not depend on $\delta_0$.

We will frequently use the following Sobolev embeddings:
\begin{align}\label{embd1bo}
W^{k,\infty}(\Omega)\hookrightarrow H^{k+2}(\Omega), \text{ for $n\in\mathbb{N}\cup\left\{ 0\right\}$},
\end{align}
and the usual tame estimate:
\[
\rVert fg \rVert_{H^k}\le \rVert f\rVert_{L^\infty}\rVert g\rVert_{H^k} + \rVert f\rVert_{H^k}\rVert g\rVert_{L^\infty}, \text{ for $k\ge 0$,}
\]
without specifically mentioning in the proof.

\begin{lemma}\label{leray_lem1}
Let $w$ be a smooth solution to \eqref{evol_w}. Then it holds that
\begin{align}
\rVert w\rVert_{H^4} \le_C  \rVert \mathbb{P}\Delta w_t\rVert_{L^2} + \rVert F_t\rVert_{L^2} +  \rVert F\rVert_{H^2}, \label{leaving1} \\
\rVert w\rVert_{H^4} \le_C \rVert w_{tt}\rVert_{L^2} +\rVert F_t\rVert_{L^2}+ \rVert F\rVert_{H^2}, \label{leaving2}\\
\rVert w\rVert_{H^5}  \le_C \rVert w_{tt}\rVert_{H^1} +\rVert F\rVert_{H^3} + \rVert F_t\rVert_{H^1}, \label{leaving3}
\end{align}
where $\mathbb{P}$ denotes the Leray projection defined in Lemma~\ref{leray_pro}.
\end{lemma}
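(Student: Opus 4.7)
The plan is to view equations \eqref{evol_w} and \eqref{wtt_1} as \emph{stationary} Stokes systems in the spatial variables, with the time derivative of the unknown playing the role of an additional source term, and then apply the elliptic regularity of Lemma~\ref{Stokes_wellposed} iteratively. Rewriting \eqref{evol_w} as
\[
-\Delta w=-\nabla p-(w_t+F),\quad \nabla\cdot w=0\ \text{in }\Omega,\quad w|_{\partial\Omega}=0,
\]
Lemma~\ref{Stokes_wellposed} with $k=2$ and $k=3$ respectively gives
\[
\rVert w\rVert_{H^4}\le_C\rVert w_t\rVert_{H^2}+\rVert F\rVert_{H^2},\qquad \rVert w\rVert_{H^5}\le_C\rVert w_t\rVert_{H^3}+\rVert F\rVert_{H^3}.
\]
Exactly the same argument applied to \eqref{wtt_1}, viewed as a stationary Stokes system for the unknown $w_t$ with source $w_{tt}+F_t$, produces
\[
\rVert w_t\rVert_{H^2}\le_C\rVert w_{tt}\rVert_{L^2}+\rVert F_t\rVert_{L^2},\qquad \rVert w_t\rVert_{H^3}\le_C\rVert w_{tt}\rVert_{H^1}+\rVert F_t\rVert_{H^1}.
\]
Chaining these bounds immediately yields \eqref{leaving2} and \eqref{leaving3}.

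For \eqref{leaving1} I still need to convert the $\rVert w_{tt}\rVert_{L^2}$ appearing in the bound for $\rVert w_t\rVert_{H^2}$ into an expression involving $\mathbb{P}\Delta w_t$. The plan is to apply the Leray projection $\mathbb{P}$ of Lemma~\ref{leray_pro} directly to \eqref{wtt_1}. The pressure gradient is annihilated, $\mathbb{P}\nabla p_t=0$, and since $w_{tt}$ is divergence-free with vanishing normal trace on $\partial\Omega$---a consequence of differentiating the identity $w_t|_{\partial\Omega}=0$ in $t$---the uniqueness statement in Lemma~\ref{leray_pro} forces $\mathbb{P}w_{tt}=-w_{tt}$. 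Consequently \eqref{wtt_1} produces the algebraic identity
\[
w_{tt}=\mathbb{P}F_t-\mathbb{P}\Delta w_t,
\]
and the $L^2$-boundedness of $\mathbb{P}$ gives $\rVert w_{tt}\rVert_{L^2}\le \rVert F_t\rVert_{L^2}+\rVert \mathbb{P}\Delta w_t\rVert_{L^2}$. Substituting this into the bound for $\rVert w_t\rVert_{H^2}$, and then into the bound for $\rVert w\rVert_{H^4}$ from the first step, yields \eqref{leaving1}.

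No serious obstacle is anticipated: once \eqref{evol_w} and \eqref{wtt_1} are recognized as stationary Stokes problems, the proof is essentially bookkeeping. The only delicate point is tracking the sign convention adopted in Lemma~\ref{leray_pro}, which forces $\mathbb{P}g=-g$ whenever $g$ is divergence-free with vanishing normal trace, and is precisely what makes the identity for $w_{tt}$ emerge with the correct sign.
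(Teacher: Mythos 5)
Your proposal is correct and follows essentially the same route as the paper: treat \eqref{evol_w} and \eqref{wtt_1} as stationary Stokes systems with the time derivatives as source terms, chain Lemma~\ref{Stokes_wellposed} to get \eqref{leaving2}--\eqref{leaving3}, and apply the Leray projection to \eqref{wtt_1} to trade $\rVert w_{tt}\rVert_{L^2}$ for $\rVert \mathbb{P}\Delta w_t\rVert_{L^2}$ in \eqref{leaving1}. Your explicit tracking of the nonstandard sign in the paper's definition of $\mathbb{P}$ (so that $\mathbb{P}g=-g$ for divergence-free $g$ with vanishing normal trace, giving $w_{tt}=\mathbb{P}F_t-\mathbb{P}\Delta w_t$ rather than the paper's stated $w_{tt}=-\mathbb{P}F_t+\mathbb{P}\Delta w_t$) is a careful reading of Lemma~\ref{leray_pro}; either way the triangle inequality yields the same bound, so the estimate goes through identically.
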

\begin{proof}
 Firstly, taking the Leray projection $\mathbb{P}$ in the evolution equation in \eqref{wtt_1}, we have
\[
w_{tt} = -\mathbb{P}F_t +\mathbb{P}\Delta w_t.
\] 
Hence using the estimate in Lemma~\ref{leray_pro}, we get
\[
\rVert w_{tt}\rVert_{L^2}\le_C \rVert \mathbb{P}F_t\rVert_{L^2} + \rVert \mathbb{P}\Delta w_t\rVert_{L^2}\le_C \rVert F_t\rVert_{L^2} + \rVert \mathbb{P}\Delta w_t\rVert_{L^2}.
\]
Meanwhile, applying Lemma~\ref{Stokes_wellposed} to \eqref{wtt_1}, we also have $
\rVert w_t \rVert_{H^2} \le \rVert w_{tt}\rVert_{L^2} + \rVert F_t\rVert_{L^2},$ yielding that
\begin{align}\label{rkawk1ppoross}
\rVert w_t \rVert_{H^2} \le \rVert F_t\rVert_{L^2} + \rVert \mathbb{P}\Delta w_t\rVert_{L^2}.
\end{align}
Then applying Lemma~\ref{Stokes_wellposed} again to \eqref{evol_w}, we find  $\rVert w\rVert_{H^4}\le \rVert w_t\rVert_{H^2}+\rVert F\rVert_{H^2}$. Combining this with \eqref{rkawk1ppoross}, we obtain \eqref{leaving1}. 

 Next, we apply Lemma~\ref{Stokes_wellposed} to \eqref{wtt_1} and \eqref{evol_w}, which gives us that for $m\ge0$,
\begin{align}\label{ssqhrhtlvdsmwanja}
\rVert w_t\rVert_{H^{k+2}}\le \rVert w_{tt}\rVert_{H^k} +\rVert F_t\rVert_{H^k},\quad \rVert w\rVert_{H^{k+2}} \le \rVert w_t\rVert_{H^{k}}+\rVert F\rVert_{H^{k}}.
\end{align}
Plugging  $k=0,1,2$ into  these  estimates gives us \eqref{leaving2} and \eqref{leaving3}.
\end{proof}

\begin{lemma}\label{timeinu}
Suppose that $(\theta,u)$  is a smooth solution to \eqref{Boussiesq_pertur}. Then there exists $\delta_0=\delta(\rho_s)>0$ such that if \eqref{size_assu} holds for $\delta_0$, then the following estimates hold:
\begin{align}
\rVert v_t\rVert_{H^2}&\le_C \rVert u\rVert_{L^2},\label{timeinv2}\\
\rVert \theta_t\rVert_{H^1}&\le_C \rVert u\rVert_{H^1}, \label{ijtjsdskd1}\\
\rVert v_t\rVert_{H^3}&\le_C \rVert u\rVert_{H^1},\label{timeinv31}\\
\rVert u_t\rVert_{L^2}&\le_C \rVert w\rVert_{H^2}+\rVert F\rVert_{L^2}+\rVert u\rVert_{L^2},\label{timeinu1}\\
\rVert u_t\rVert_{H^2}&\le_C \rVert w_{tt}\rVert_{L^2} + \rVert F_t\rVert_{L^2} +\rVert u\rVert_{L^2},\label{timeinu3}\\
\rVert \theta_{tt}\rVert_{H^{-1}}&\le_C \rVert w\rVert_{H^2}+\rVert F\rVert_{L^2}+\rVert u\rVert_{L^2}, \label{timeint1}\\
\rVert \theta_{tt}\rVert_{L^2}&\le_C \rVert w\rVert_{H^2}+\rVert F\rVert_{L^2}+\rVert u\rVert_{H^1}, \label{ijtjsdskd2}\\
\rVert v_{tt}\rVert_{H^1}&\le_C \rVert w\rVert_{H^2}+\rVert F\rVert_{L^2}+\rVert u\rVert_{L^2}, \label{timeinv3}\\
\rVert u_{tt}\rVert_{L^2}&\le_C \rVert w_{tt}\rVert_{L^2}+ \rVert w\rVert_{H^2}+\rVert F\rVert_{L^2}+\rVert u\rVert_{L^2},\label{timeinu4}\\
\rVert v_{ttt}\rVert_{H^{-1}}&\le_C\rVert w_{tt}\rVert_{L^2}+\rVert w\rVert_{H^2}+\rVert F_t\rVert_{L^2}+ \rVert F\rVert_{L^2}+\rVert u\rVert_{H^1} \label{timeint2}.
\end{align}
\end{lemma}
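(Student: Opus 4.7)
The plan is to reduce every inequality to elliptic regularity (Lemma~\ref{Stokes_wellposed}) or the Leray projection (Lemma~\ref{leray_pro}) applied to one of three governing equations: the time-differentiated steady Stokes equation for $v$ (from \eqref{sBvec1}), the Stokes-like evolution \eqref{evol_w} and its derivatives \eqref{wtt_1}--\eqref{wtt_2} for $w$, and the pointwise transport identity $\theta_t = -u_2\partial_2\rho_s - u\cdot\nabla\theta$ for $\theta$. The smallness hypothesis \eqref{size_assu} and the embeddings \eqref{embd1bo} will be used systematically: $\rVert\theta\rVert_{W^{1,\infty}}+\rVert u\rVert_{L^\infty}\le_C\sqrt\delta$, so every quadratic contribution carries a harmless factor $\sqrt\delta$ once $\delta_0(\rho_s)$ is fixed small enough.

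For \eqref{timeinv2}, \eqref{ijtjsdskd1}, and \eqref{timeinv31} the route is to differentiate \eqref{sBvec1} in $t$ to obtain the Stokes system for $v_t$ with forcing $(0,-\theta_t)$, then apply Lemma~\ref{Stokes_wellposed} with $k=0$ and $k=1$. This reduces the task to the transport-level bounds $\rVert\theta_t\rVert_{L^2}\le_C\rVert u\rVert_{L^2}$ and $\rVert\theta_t\rVert_{H^1}\le_C\rVert u\rVert_{H^1}$; both follow by splitting $\theta_t=-u_2\partial_2\rho_s-u\cdot\nabla\theta$, bounding the linear part via $\rho_s\in H^4$, and absorbing the quadratic part through the tame product estimate together with $\rVert\nabla\theta\rVert_{L^\infty}\le_C\sqrt\delta$.

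For the $u$-bounds I would decompose $u_t=v_t+w_t$ and $u_{tt}=v_{tt}+w_{tt}$. For \eqref{timeinu1}, projecting \eqref{evol_w} onto divergence-free fields gives $w_t=\mathbb{P}\Delta w-\mathbb{P}F$, hence $\rVert w_t\rVert_{L^2}\le_C\rVert w\rVert_{H^2}+\rVert F\rVert_{L^2}$; pairing with \eqref{timeinv2} closes the bound. For \eqref{timeinu3} Stokes regularity on \eqref{wtt_1} yields $\rVert w_t\rVert_{H^2}\le_C\rVert w_{tt}\rVert_{L^2}+\rVert F_t\rVert_{L^2}$, closed again by \eqref{timeinv2}. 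The second-order $\theta$-bounds \eqref{timeint1} and \eqref{ijtjsdskd2} come from $\theta_{tt}=-u_t\cdot\nabla\rho-u\cdot\nabla\theta_t$: the nonlinear terms are placed in divergence form via $\nabla\cdot u=\nabla\cdot u_t=0$, with the $L^2$ version paying the price of estimating $u\cdot\nabla\theta_t$ directly by $\rVert u\rVert_{L^\infty}\rVert\theta_t\rVert_{H^1}$ using \eqref{ijtjsdskd1}. Feeding \eqref{timeint1} into the doubly differentiated \eqref{sBvec1} (Lemma~\ref{Stokes_wellposed} with $k=-1$) gives \eqref{timeinv3}, and adding $\rVert w_{tt}\rVert_{L^2}$ yields \eqref{timeinu4}.

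The main obstacle is \eqref{timeint2}, which chains essentially all the preceding bounds. Differentiating \eqref{sBvec1} three times in $t$ and applying Lemma~\ref{Stokes_wellposed} with $k=-1$ gives $\rVert v_{ttt}\rVert_{H^{-1}}\le_C\rVert v_{ttt}\rVert_{H^1}\le_C\rVert\theta_{ttt}\rVert_{H^{-1}}$, so the remaining work is to bound
\[
\theta_{ttt} = -u_{tt}\cdot\nabla\rho - 2u_t\cdot\nabla\theta_t - u\cdot\nabla\theta_{tt}
\]
in $H^{-1}$. The linear piece $u_{tt,2}\partial_2\rho_s$ contributes $C\rVert u_{tt}\rVert_{L^2}$, controlled by \eqref{timeinu4}; the other three products are placed in divergence form (using incompressibility of $u$, $u_t$, $u_{tt}$) and absorb factors from $\rVert\theta\rVert_{L^\infty}+\rVert\theta_t\rVert_{L^4}+\rVert u\rVert_{L^\infty}\le_C\sqrt\delta$, leaving behind $\rVert u_t\rVert_{H^1}\le\rVert u_t\rVert_{H^2}$ (from \eqref{timeinu3}) and $\rVert\theta_{tt}\rVert_{L^2}$ (from \eqref{ijtjsdskd2}). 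The subtle point across the whole lemma is the bookkeeping of smallness factors so that every quadratic remainder is absorbed into a linear leading term; this is ensured by choosing $\delta_0(\rho_s)$ so small that every occurrence of $C\sqrt\delta$ is harmless.
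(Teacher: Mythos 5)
Your proposal follows the same route as the paper: every inequality is reduced to Stokes regularity (Lemma~\ref{Stokes_wellposed}) or the Leray projection (Lemma~\ref{leray_pro}) applied to the time-differentiated Stokes system for $v$, the evolution equations \eqref{evol_w}--\eqref{wtt_2} for $w$, and the transport identity for $\theta$, with nonlinear remainders absorbed via the smallness hypothesis \eqref{size_assu} and the tame product and $L^\infty$-embedding estimates. The only differences are cosmetic (e.g.\ bounding $\rVert u_t\theta_t\rVert_{L^2}$ by $\rVert u_t\rVert_{L^4}\rVert\theta_t\rVert_{L^4}$ rather than the paper's $\rVert u_t\rVert_{L^\infty}\rVert\theta_t\rVert_{L^2}$), so the argument is correct and matches the paper's proof.
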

\begin{proof}

\textbf{Proof of \eqref{timeinv2}.}  
We differentiate the equation \eqref{sBvec1} in  time to see that
\begin{align}\label{sBvecs2d2d3ss22}
-\Delta v_t = -\nabla q_t -\colvec{0\\\theta_t} =-\nabla q_t -\colvec{0 \\ - u \cdot\nabla \theta  -  \partial_2\rho_s u_2},\quad 
\nabla\cdot v_t=0, \text{ in $\Omega$ and },
v_t = 0 \text{ on $\partial\Omega$.}
\end{align}
Applying Lemma~\ref{Stokes_wellposed} to \eqref{sBvecs2d2d3ss22} yields that
\begin{align}\label{rksdjwo2sdxcw2}
\rVert v_t\rVert_{H^2}\le_C \rVert u\cdot\nabla\theta\rVert_{L^2}+\rVert \partial_2\rho_s u_2\rVert_{L^2}\le_C (\rVert \nabla\theta\rVert_{L^\infty}+\rVert \partial_2\rho_s\rVert_{L^\infty})\rVert u\rVert_{L^2}\le_C \rVert u\rVert_{L^2},
\end{align}
where we used \eqref{size_assu} and \eqref{steadtysd2sd} for the last inequality.

\textbf{Proof of \eqref{ijtjsdskd1}.} From $\theta_t$ in \eqref{Boussiesq_pertur}, we compute
\[
\rVert \theta_t\rVert_{H^1}\le_C \rVert u\nabla\theta\rVert_{H^1}+\rVert \partial_2\rho_su_2\rVert_{H^1}\le_C\rVert u\rVert_{H^1},
\]
where the last inequality is due to \eqref{size_assu} and \eqref{steadtysd2sd}.

\textbf{Proof of \eqref{timeinv31}.}  
Again applying Lemma~\ref{Stokes_wellposed} to \eqref{sBvecs2d2d3ss22}, we get $\rVert v_t\rVert_{H^3}\le_C\rVert \theta_t\rVert_{H^1}\le_C \rVert u\rVert_{H^1}$, where the last inequality is due to \eqref{ijtjsdskd1}.

\textbf{Proof of \eqref{timeinu1}.}  Using $w=u-v$, we get
\[
\rVert u_t\rVert_{L^2}\le \rVert w_t\rVert_{L^2} + \rVert v_t\rVert_{L^2}\le_C \rVert w\rVert_{H^2} + \rVert F\rVert_{L^2} + \rVert v_t\rVert_{L^2},
\]where the last inequality follows from applying Lemma~\ref{leray_pro} to \eqref{evol_w}. Combining this with \eqref{timeinv2}, we obtain \eqref{timeinu1}.

\textbf{Proof of \eqref{timeinu3}.} Again using $w=u-v$, we  estimate
\[
\rVert u_t\rVert_{H^2}\le \rVert w_t\rVert_{H^2} + \rVert v_t\rVert_{H^2}\le \rVert w_t\rVert_{H^2}+ \rVert u\rVert_{L^2},
\]
where the last inequality is due to \eqref{timeinv2}. Applying Lemma~\ref{Stokes_wellposed} to \eqref{wtt_1}, we get 
\[
\rVert w_t\rVert_{H^2}\le_C \rVert w_{tt}\rVert_{L^2} +\rVert F_t\rVert_{L^2},
\]
which gives the desired result.

\textbf{Proof of \eqref{timeint1}.}  
Differentiating the first equation in \eqref{Boussiesq_pertur}, we see that
\begin{align}\label{thet11cpp}
\theta_{tt} = -u_t\cdot\nabla \theta -u\cdot\nabla \theta_t -\partial_2\rho_s \partial_t u_2= -\nabla\cdot(u_t\theta + u\theta_t) - \partial_2\rho_s \partial_t u_2.
\end{align}
Hence,
\begin{align*}
\rVert \theta_{tt}\rVert_{H^{-1}}&\le_C \rVert u_t\theta + u\theta_t\rVert_{L^2} + \rVert \partial_2\rho_s\partial_2u_2\rVert_{L^2}\\
&\le_C \rVert u_t\rVert_{L^2} + \rVert \theta_t\rVert_{L^2} + \rVert u_2\rVert_{L^2}.
\end{align*}
where the last inequality is due to \eqref{size_assu} and \eqref{steadtysd2sd}. Moreover, from the first equation in \eqref{Boussiesq_pertur}, we observe that
\[
\rVert \theta_t\rVert_{L^2}\le_C \rVert u\cdot \nabla \theta\rVert_{L^2} + \rVert \partial_2\rho_s u_2\rVert_{L^2}\le_C \rVert u\rVert_{L^2},
\]
where the last inequality is due to \eqref{size_assu} and \eqref{steadtysd2sd}. Hence, we arrive at $\rVert \theta_{tt}\rVert_{H^{-1}}\le_C \rVert u_t\rVert_{L^2}+ \rVert u\rVert_{L^2}$. Combining this with \eqref{timeinu1}, we obtain the desired result.

\textbf{Proof of \eqref{ijtjsdskd2}.} Using \eqref{thet11cpp}, we compute
\[
\rVert \theta_{tt}\rVert_{L^2}\le_C \rVert u_t\cdot\nabla \theta\rVert_{L^2}+\rVert u\cdot\nabla \theta_t\rVert_{L^2}+\rVert\partial_2\rho_s\partial_tu_2\rVert_{L^2}\le_C \rVert u_t\rVert_{L^2} + \rVert \theta_t\rVert_{H^1}\le_C \rVert w\rVert_{H^2}+\rVert F\rVert_{L^2}+\rVert u\rVert_{H^1},
\]
where the second inequality is due to  \eqref{size_assu} and \eqref{steadtysd2sd}, and the last inequality follows from \eqref{ijtjsdskd1} and \eqref{timeinu1}.

\textbf{Proof of \eqref{timeinv3}.}  
Observe from \eqref{sBvec1} that $\partial_{tt}v$ solves
\begin{align}\label{sBvec32}
-\Delta v_{tt} = -\nabla q_{tt} -\colvec{0 \\ \theta_{tt}},\quad \nabla\cdot v_{tt}=0 \text{ in $\Omega$,}\quad v_{tt} = 0 \text{ on $\partial\Omega$.}
\end{align}
Applying Lemma~\ref{Stokes_wellposed}, we see that
\begin{align}\label{vttpreg}
\rVert v_{tt}\rVert_{H^{k+2}}\le_C \rVert \theta_{tt}\rVert_{H^k}, \text{ for $k\ge -1$.}
\end{align}
Plugging $k=-1$ and using \eqref{timeint1}, we get \eqref{timeinv3}.

\textbf{Proof of \eqref{timeinu4}.}   Using $w=u-v$, we have
\[
\rVert u_{tt}\rVert_{L^2}\le_C \rVert w_{tt}\rVert_{L^2} + \rVert v_{tt}\rVert_{L^2}\le_C \rVert w_{tt}\rVert_{L^2} +  \rVert w\rVert_{H^2}+\rVert F\rVert_{L^2}+\rVert u\rVert_{L^2},
\]
where the last inequality is due to \eqref{timeinv3}.

\textbf{Proof of \eqref{timeint2}.} Differentiating \eqref{thet11cpp} in time, we get
\[
\theta_{ttt}= \nabla\cdot (u_{tt}\theta + 2u_{t}\theta_t + u\theta_{tt}) -\partial_2\rho_s\partial_{tt}u_2.
\]
Thus, from \eqref{steadtysd2sd} and \eqref{size_assu}, it follows that
\begin{align*}
\rVert \theta_{ttt}\rVert_{H^{-1}}&\le_C \rVert u_{tt}\theta + 2u_{t}\theta_t+u\theta_{tt}\rVert_{L^2} + \rVert \partial_{tt}u_2\rVert_{L^2}\\
&\le_C \rVert u_{tt}\rVert_{L^2}+\rVert u_t\rVert_{L^\infty}\rVert \theta_t\rVert_{L^2}+\rVert u\rVert_{L^\infty}\rVert \theta_{tt}\rVert_{L^2}\\
&\le_C \rVert u_{tt}\rVert_{L^2} + \rVert u_{t}\rVert_{H^2}\rVert u\rVert_{H^1}+\rVert u\rVert_{L^\infty}\left(\rVert w\rVert_{H^2}+\rVert F\rVert_{L^2}+\rVert u\rVert_{H^1}\right),
\end{align*}
where the last inequality is due to \eqref{ijtjsdskd1} and \eqref{ijtjsdskd2}. Using \eqref{size_assu}, \eqref{timeinu3} and \eqref{timeinu4}, we notice that the above estimate implies
\begin{align}\label{wanja1123}
\rVert \theta_{ttt}\rVert_{H^{-1}}\le_C \rVert w_{tt}\rVert_{L^2}+ \rVert F_t\rVert_{L^2}+\rVert w\rVert_{H^2}+\rVert F\rVert_{L^2}+\rVert u\rVert_{H^1}.
\end{align}
Observe that differentiating \eqref{sBvec32} in time, we have
 \begin{align}\label{sBvec322sdsd}
-\Delta v_{ttt} = -\nabla q_{ttt} -\colvec{0 \\ \theta_{ttt}},\quad \nabla\cdot v_{ttt}=0 \text{ in $\Omega$,}\quad v_{ttt} = 0 \text{ on $\partial\Omega$.}
\end{align}
Hence, applying Lemma~\ref{Stokes_wellposed}, we get
\[
\rVert v_{ttt}\rVert_{H^{-1}}\le_C\rVert v_{ttt}\rVert_{H^1}\le_C \rVert \theta_{ttt}\rVert_{H^{-1}}.
\]
Hence the estimate \eqref{wanja1123} implies the desired result.
\end{proof}

Now, we derive necessary estimates for $F$. 

\begin{lemma}\label{endless_estimates1}
Suppose that $(\theta,u)$  is a smooth solution to \eqref{Boussiesq_pertur}. Then there exists $\delta_0=\delta(\rho_s)>0$ such that if \eqref{size_assu} holds for $\delta_0$, then the following estimates hold:
\begin{align}
\rVert F\rVert_{L^2}&\le_C \rVert u\rVert_{H^1},\label{BouFestimate11}\\
\rVert F\rVert_{H^2}&\le_C \rVert u\rVert_{H^1},\label{BouFestimate1}\\
\rVert F\rVert_{H^3}&\le_C \rVert u\rVert_{H^3}, \label{BouFestimate2}\\
\rVert F_t\rVert_{L^2}&\le_C  \rVert w\rVert_{H^2}+\rVert u\rVert_{H^1} + \rVert u\rVert_{H^3}\rVert w_{tt}\rVert_{L^2}, \label{BouFestimate3}\\
\rVert F_t\rVert_{H^1}&\le_C \rVert u\rVert_{H^4} \rVert w_{tt}\rVert_{L^2} + \rVert w\rVert_{H^2}+\rVert u\rVert_{H^1},\label{BouFestimate4}\\
\rVert F_{tt}\rVert_{H^{-1}}&\le_C \rVert w_{tt}\rVert_{L^2}+ \rVert w\rVert_{H^2}+\rVert u\rVert_{H^1}.\label{BouFestimate5}
\end{align}
\end{lemma}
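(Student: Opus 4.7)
The plan is to expand $F=u\cdot\nabla u+v_t$ (and its time derivatives) and bound each factor with the tame Sobolev product inequality, using Lemma~\ref{timeinu} to convert time-derivative norms of $v$ and $u$ into spatial quantities of $w,\theta,u$, while exploiting the smallness hypothesis \eqref{size_assu} together with the 2D embeddings $H^2\hookrightarrow L^\infty$ and $H^3\hookrightarrow W^{1,\infty}$. The estimates are proved in the order listed, so that each bound may invoke its predecessors.

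For the three spatial estimates \eqref{BouFestimate11}--\eqref{BouFestimate2} I would split $\rVert F\rVert_{H^k}\le \rVert u\cdot\nabla u\rVert_{H^k}+\rVert v_t\rVert_{H^k}$. The transport term is controlled by the tame estimate and Sobolev embedding: for $k=0$, $\rVert u\cdot\nabla u\rVert_{L^2}\le \rVert u\rVert_{L^\infty}\rVert \nabla u\rVert_{L^2}\le_C \rVert u\rVert_{H^2}\rVert u\rVert_{H^1}\le_C \delta^{1/2}\rVert u\rVert_{H^1}$, and analogously for $k=2,3$ using $\rVert u\rVert_{H^4}\le \delta^{1/2}$. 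The $v_t$ contributions come directly from \eqref{timeinv2} and \eqref{timeinv31}.

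For \eqref{BouFestimate3}--\eqref{BouFestimate4} I would differentiate, $F_t=u_t\cdot\nabla u+u\cdot\nabla u_t+v_{tt}$. The first piece obeys $\rVert u_t\cdot\nabla u\rVert_{L^2}\le \rVert u\rVert_{W^{1,\infty}}\rVert u_t\rVert_{L^2}\le_C \rVert u\rVert_{H^3}(\rVert w\rVert_{H^2}+\rVert u\rVert_{H^1})$ by \eqref{timeinu1} and the bound on $\rVert F\rVert_{L^2}$ just proved; with $\rVert u\rVert_{H^3}\le \delta^{1/2}$ this is absorbed into $\rVert w\rVert_{H^2}+\rVert u\rVert_{H^1}$. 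The genuinely delicate piece is $u\cdot\nabla u_t$, because estimating $\rVert u_t\rVert_{H^1}$ via \eqref{timeinu3} reintroduces $\rVert F_t\rVert_{L^2}$ on the right; the key point is that the resulting coefficient is $\rVert u\rVert_{L^\infty}\lesssim \rVert u\rVert_{H^2}\le \delta^{1/2}$, so the $\rVert F_t\rVert_{L^2}$ term may be absorbed to the left. The residual contribution inherits the factor $\rVert u\rVert_{H^3}\rVert w_{tt}\rVert_{L^2}$ from \eqref{timeinu3}, and $\rVert v_{tt}\rVert_{L^2}$ is supplied by \eqref{timeinv3} combined with \eqref{BouFestimate11}. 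The $H^1$ version \eqref{BouFestimate4} is handled identically after one additional derivative, using the tame estimate and the higher regularity $\rVert u\rVert_{H^4}\le \delta^{1/2}$.

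For \eqref{BouFestimate5} I would write $F_{tt}=u_{tt}\cdot\nabla u+2u_t\cdot\nabla u_t+u\cdot\nabla u_{tt}+v_{ttt}$, and exploit the divergence-free structure to recast each nonlinear term as $u_{tt}\cdot\nabla u=\nabla\cdot(u_{tt}\otimes u)$, $u\cdot\nabla u_{tt}=\nabla\cdot(u\otimes u_{tt})$, and $u_t\cdot\nabla u_t=\nabla\cdot(u_t\otimes u_t)$, so that the $H^{-1}$ norm is controlled by the $L^2$ norm of the corresponding product. The products are then estimated by $\rVert u\rVert_{L^\infty}\rVert u_{tt}\rVert_{L^2}$, $\rVert u_t\rVert_{L^4}^2\le_C \rVert u_t\rVert_{L^2}\rVert u_t\rVert_{H^1}$ (Ladyzhenskaya), and combined with \eqref{timeinu1}, \eqref{timeinu3}, \eqref{timeinu4}, \eqref{timeint2} and \eqref{BouFestimate3} already proved. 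The smallness $\rVert u\rVert_{H^2}\le \delta^{1/2}$ renders every coefficient in front of $\rVert w_{tt}\rVert_{L^2}+\rVert w\rVert_{H^2}+\rVert u\rVert_{H^1}$ either benign or absorbable. The main obstacle throughout is this circular use of \eqref{timeinu3} (and the analogous fact for $v_{ttt}$), where $F_t$ appears on both sides; closing the loop relies crucially on the $\delta_0$-smallness assumption, which makes this lemma valid only for sufficiently small $\delta$.
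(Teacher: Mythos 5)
Your proposal follows essentially the same route as the paper: expand $F=u\cdot\nabla u+v_t$ and its time derivatives, control the transport pieces by tame Sobolev product estimates and the embeddings $H^2\hookrightarrow L^\infty$, $H^3\hookrightarrow W^{1,\infty}$, import the $v$-derivative bounds from Lemma~\ref{timeinu}, pass $F_{tt}$ to $H^{-1}$ via the divergence-form rewriting, and absorb the $\rVert F_t\rVert_{L^2}$ term that \eqref{timeinu3} reintroduces using the $\delta_0$-smallness---all of which is what the paper does.

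One step is glossed over by the word \emph{analogously}. For \eqref{BouFestimate1} the tame estimate yields only $\rVert u\cdot\nabla u\rVert_{H^2}\lesssim\rVert u\rVert_{H^2}\rVert u\rVert_{H^3}$, and replacing one factor by $\delta^{1/2}$ gives at best $\delta^{1/2}\rVert u\rVert_{H^2}$ or $\delta^{1/2}\rVert u\rVert_{H^3}$, not the claimed $\rVert u\rVert_{H^1}$. The paper closes this with the Gagliardo--Nirenberg interpolation
\[
\rVert u\rVert_{H^2}\le_C\rVert u\rVert_{H^1}^{2/3}\rVert u\rVert_{H^4}^{1/3},\qquad
\rVert u\rVert_{H^3}\le_C\rVert u\rVert_{H^1}^{1/3}\rVert u\rVert_{H^4}^{2/3},
\]
so that $\rVert u\rVert_{H^2}\rVert u\rVert_{H^3}\le_C\rVert u\rVert_{H^1}\rVert u\rVert_{H^4}\le_C\delta^{1/2}\rVert u\rVert_{H^1}$. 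This interpolation is an essential extra ingredient in the $H^2$ case (the $H^3$ case \eqref{BouFestimate2} by contrast needs no such trick, since the target norm there is $\rVert u\rVert_{H^3}$, not $\rVert u\rVert_{H^1}$). With that step made explicit, the proposal matches the paper's proof.
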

\begin{proof}
\textbf{Proof of \eqref{BouFestimate11}.}  Recalling $F$ from \eqref{wdef_1db},
\begin{align}\label{wdef_1db2}
F=u\cdot \nabla u + v_t,
\end{align}
we  estimate
\[
\rVert F\rVert_{L^2}\le \rVert v_t\rVert_{L^2} + \rVert u\cdot\nabla u\rVert_{L^2}\le_C \rVert v_t\rVert_{L^2}+ \rVert u\rVert_{L^\infty}\rVert u\rVert_{H^1}\le_C\rVert v_t\rVert_{L^2}+\rVert u\rVert_{H^1}.
\] where the last inequality is due to \eqref{size_assu}. Using \eqref{timeinv2} we obtain the desired result.

\textbf{Proof of \eqref{BouFestimate1}.} 
In view of \eqref{wdef_1db2}, we  estimate 
\[
\rVert F\rVert_{H^2}\le_C \rVert v_t\rVert_{H^2}+\rVert u\cdot\nabla u\rVert_{H^2}\le_C  \rVert v_t\rVert_{H^2} + \rVert \nabla (u\otimes u)\rVert_{H^2}\le_C \rVert v_t\rVert_{H^2} + \rVert u\otimes u\rVert_{H^3}.
\]
Using the tame estimate, we have
\[
 \rVert u\otimes u\rVert_{H^3}\le_C \rVert u\rVert_{L^\infty}\rVert u\rVert_{H^3}\le_C \rVert u\rVert_{H^2}\rVert u\rVert_{H^3}.
\]
The Gagliardo-Nirenberg interpolation inequality tells us
\[
\rVert u\rVert_{H^2}\le_C \rVert u\rVert_{H^1}^{2/3}\rVert_{H^4}^{1/3},\quad \rVert u\rVert_{H^3}\le_C \rVert u\rVert_{H^1}^{1/3}\rVert u \rVert_{H^4}^{2/3},
\]
therefore $\rVert u\otimes u\rVert_{H^3}\le_C \rVert u\rVert_{H^1}\rVert u\rVert_{H^4}\le_C \rVert u\rVert_{H^1}$,
where the last inequality is due to \eqref{size_assu}.  Using \eqref{timeinv2} we obtain
\[
\rVert F\rVert_{H^2}\le_C \rVert u\rVert_{H^1},
\] which is the desired result.

\textbf{Proof of \eqref{BouFestimate2}.} 
Again from the definition of $F$ in \eqref{wdef_1db2}, we need to estimate
\[
\rVert F\rVert_{H^3}\le \rVert  v_t\rVert_{H^3} + \rVert u\cdot\nabla u\rVert_{H^3}.
\]
Since $H^3(\Omega)$ is a Banach algebra, we have
\begin{align}\label{juylsd2}
\rVert u\cdot\nabla u\rVert_{H^3}\le \rVert u\rVert_{H^3}\rVert \nabla u\rVert_{H^3}\le \rVert u\rVert_{H^3}\rVert  u\rVert_{H^4}\le_C \rVert u\rVert_{H^3},
\end{align}
where the last inequality is due to \eqref{size_assu}. Hence, combining this with \eqref{timeinv31}, we obtain the desired result.

\textbf{Proof of \eqref{BouFestimate3}.} Differentiating \eqref{wdef_1db2} in time, we get
\begin{align}\label{Fdwppoq}
F_t = v_{tt} + u_t\cdot\nabla u + u\cdot\nabla u_t.
\end{align}
Hence, we  estimate
\[
\rVert F_t\rVert_{L^2}\le_C \rVert v_{tt}\rVert_{L^2}+\rVert u_t\cdot \nabla u\rVert_{L^2}+\rVert u\cdot \nabla u_t\rVert_{L^2}\le_C \rVert v_{tt}\rVert_{L^2}+\rVert u\rVert_{W^{1,\infty}}\rVert \nabla u_t\rVert_{H^1}\le_C \rVert v_{tt}\rVert_{L^2}+\rVert u\rVert_{H^3} \rVert u_{t}\rVert_{H^2}.
\]
Using \eqref{timeinv3} and \eqref{timeinu3}, we get
\begin{align*}
\rVert F_t\rVert_{L^2}&\le_C  \rVert w\rVert_{H^2}+\rVert F\rVert_{L^2}+\rVert u\rVert_{L^2} + \rVert u\rVert_{H^3}\left( \rVert w_{tt}\rVert_{L^2} +  \rVert F_t\rVert_{L^2}+\rVert u\rVert_{L^2}\right)\\
&\le_C \rVert w\rVert_{H^2}+\rVert F\rVert_{L^2}+\rVert u\rVert_{L^2} + \rVert u\rVert_{H^3}\rVert w_{tt}\rVert_{L^2} +  \sqrt{\delta_0} \rVert F_t\rVert_{L^2},
\end{align*}
where the last inequality is due to \eqref{size_assu}. Since $\delta_0$ is assumed small, $\delta_0\ll 1$ as noted in \eqref{size_assu}, we can send the last term $\sqrt{\delta_0} \rVert F_t\rVert_{L^2}$ to the left-hand side and obtain 
\[
\rVert F_t\rVert_{L^2}\le   \rVert w\rVert_{H^2}+\rVert F\rVert_{L^2}+\rVert u\rVert_{L^2} + \rVert u\rVert_{H^3}\rVert w_{tt}\rVert_{L^2}. 
\]
Then the desired result follows from \eqref{BouFestimate11}.

\textbf{Proof of \eqref{BouFestimate4}.} 
From \eqref{Fdwppoq}, we have
\begin{align}\label{112sosimple}
\rVert F_t\rVert_{H^1}\le \rVert \partial_{tt}v\rVert_{H^1} + \rVert u_t\cdot\nabla u\rVert_{H^1} + \rVert u\cdot\nabla u_t\rVert_{H^1}.
\end{align}
 As before, we estimate
\begin{align}\label{mo1222vethetatt1}
\rVert u_t\cdot\nabla u\rVert_{H^1} + \rVert u\cdot\nabla u_t\rVert_{H^1}\le \rVert u_t\rVert_{H^2}\rVert u\rVert_{W^{2,\infty}} \le \rVert u\rVert_{H^4}\rVert u_t\rVert_{H^2}\le \rVert u\rVert_{H^4}\rVert u_t\rVert_{H^2}.
\end{align}
 Thanks to \eqref{timeinu3}, this estimate implies
\[
\rVert u_t\cdot\nabla u\rVert_{H^1} + \rVert u\cdot\nabla u_t\rVert_{H^1}\le_C \rVert u\rVert_{H^4}\left( \rVert w_{tt}\rVert_{L^2} + \rVert F_t\rVert_{L^2}+\rVert u\rVert_{L^2}\right)\le_C \rVert u\rVert_{H^4} \rVert w_{tt}\rVert_{L^2} + \rVert w\rVert_{H^2}+\rVert u\rVert_{H^1},
\]
where the last inequality is due to \eqref{BouFestimate3} and \eqref{size_assu}. Combining this and \eqref{timeinv3} and plugging them into \eqref{112sosimple}, we obtain 
\[
\rVert F_t\rVert_{H^1}\le_C  \rVert u\rVert_{H^4} \rVert w_{tt}\rVert_{L^2} + \rVert w\rVert_{H^2}+\rVert u\rVert_{H^1} + \rVert F\rVert_{L^2}\le_C \rVert u\rVert_{H^4} \rVert w_{tt}\rVert_{L^2} + \rVert w\rVert_{H^2}+\rVert u\rVert_{H^1},
\]
where the last inequality is due to \eqref{BouFestimate11}. This is the desired estimate.

\textbf{Proof of \eqref{BouFestimate5}.}  From \eqref{Fdwppoq}, we get
\begin{align}\label{bathes}
F_{tt}= v_{ttt} + u_{tt}\cdot\nabla u + 2u_t\cdot\nabla u_{t} + u\cdot\nabla u_{tt}.
\end{align}
Hence, we estimate
\begin{align}\label{whasd1sdsx1}
\rVert F_{tt}\rVert_{H^{-1}}\le_C \rVert  v_{ttt}\rVert_{H^{-1}} + \rVert u_{tt}\otimes u + 2u_t\otimes u_t + u\otimes u_{tt}\rVert_{L^2}.
\end{align}
The second term can be estimated as
\begin{align}\label{whansdle1}
\rVert u_{tt}\otimes u + 2u_t\otimes u_t + u\otimes u_{tt}\rVert_{L^2}&\le_C \rVert u\rVert_{L^\infty}\rVert u_{tt}\rVert_{L^2} + \rVert u_t\rVert_{L^4}^2\le_C \rVert u\rVert_{L^\infty}\rVert u_{tt}\rVert_{L^2} + \rVert u_t\rVert_{H^1}^2\nonumber\\
&\le_C \rVert u_{tt}\rVert_{L^2} +\rVert u_t\rVert_{H^1}^2,
\end{align}
where the second inequality follows from the Sobolev embedding $H^1(\Omega)\hookrightarrow L^4(\Omega)$.
Note that \eqref{timeinu4} implies
\begin{align}\label{wnajawhen}
\rVert u_{tt}\rVert_{L^2}\le_C \rVert w_{tt}\rVert_{L^2}+ \rVert w\rVert_{H^2}+\rVert F\rVert_{L^2}+\rVert u\rVert_{L^2}\le_C \rVert w_{tt}\rVert_{L^2}+ \rVert w\rVert_{H^2}+\rVert u\rVert_{H^1},
\end{align}
where the last inequality is due to \eqref{BouFestimate11}. Furthermore, \eqref{timeinu3} implies
\begin{align}\label{sdk1posd21}
\rVert u_t\rVert_{H^1}&\le_C \rVert w_{tt}\rVert_{L^2}  +\rVert u\rVert_{L^2} +  \rVert F_t\rVert_{L^2}\nonumber\\
&\le_C \rVert w_{tt}\rVert_{L^2} + \rVert w\rVert_{H^2}+\rVert u\rVert_{H^1} + \rVert u\rVert_{H^3}\rVert w_{tt}\rVert_{L^2}\nonumber\\
&\le_C  \rVert w_{tt}\rVert_{L^2} + \rVert w\rVert_{H^2}+\rVert u\rVert_{H^1}.
\end{align}
where the second  inequality is due to \eqref{BouFestimate3} and the last inequality is due to \eqref{size_assu}. Using this and \eqref{wnajawhen}, we notice that \eqref{whansdle1} becomes
\begin{align}\label{tt1sdsd1}
\rVert u_{tt}\otimes u + 2u_t\otimes u_t + u\otimes u_{tt}\rVert_{L^2}\le_C \rVert w_{tt}\rVert_{L^2}+ \rVert w\rVert_{H^2}+\rVert u\rVert_{H^1}.
\end{align}
Also, it follows from \eqref{timeint2} that
\[
\rVert v_{ttt}\rVert_{H^{-1}}\le_C\rVert w_{tt}\rVert_{L^2}+\rVert w\rVert_{H^2}+\rVert F_t\rVert_{L^2}+ \rVert F\rVert_{L^2}+\rVert u\rVert_{H^1} \le_C \rVert w_{tt}\rVert_{L^2}+ \rVert w\rVert_{H^2}+\rVert u\rVert_{H^1},
\]
where the last inequality is due to \eqref{BouFestimate3} and \eqref{BouFestimate11}.
Thus, plugging this and \eqref{tt1sdsd1}  into \eqref{whasd1sdsx1}, we obtain the desired result.
\end{proof}

 Before we close this subsection, we prove that in order to ensure the smallness assumption for $\rVert u(t)\rVert_{H^4}$ in \eqref{size_assu} throughout the evolution of the solution, it is sufficient to control $\rVert w_{tt}(t)\rVert_{L^2}, \rVert w_t(t)\rVert_{L^2}$ and $\rVert \theta(t)\rVert_{H^4}$.
  \begin{lemma}\label{smallenough}
 Suppose $(\theta,u)$ is a  solution to \eqref{Boussiesq_pertur} with initial data $(\theta_0, u_0)$. 
Then there exists $\delta_0=\delta(\rho_s)>0$ such that if \eqref{size_assu} holds for $\delta_0$, then
\[
\rVert u(t)\rVert_{H^4}\le_C \rVert w_{tt}(t)\rVert_{L^2} +  \rVert w_t(t)\rVert_{L^2}+ \rVert \theta(t)\rVert_{H^4}  + E_T(0).
\] 

 \end{lemma}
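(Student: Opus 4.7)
The plan is to split $u = w + v$ and bound each piece using the machinery already developed. For $v$, the steady Stokes equation \eqref{sBvec1} combined with Lemma~\ref{Stokes_wellposed} immediately yields $\rVert v\rVert_{H^4}\le_C \rVert \theta\rVert_{H^2}\le_C \rVert \theta\rVert_{H^4}$, since the regularity of $v$ is two derivatives better than that of the forcing $\theta$. So the work is all on the $w$ side.

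For $\rVert w\rVert_{H^4}$, I would apply \eqref{leaving2} from Lemma~\ref{leray_lem1} to get
\[
\rVert w\rVert_{H^4}\le_C \rVert w_{tt}\rVert_{L^2}+\rVert F_t\rVert_{L^2}+\rVert F\rVert_{H^2},
\]
and then invoke the $F$-estimates \eqref{BouFestimate1} and \eqref{BouFestimate3} of Lemma~\ref{endless_estimates1}, namely $\rVert F\rVert_{H^2}\le_C \rVert u\rVert_{H^1}$ and $\rVert F_t\rVert_{L^2}\le_C \rVert w\rVert_{H^2}+\rVert u\rVert_{H^1}+\rVert u\rVert_{H^3}\rVert w_{tt}\rVert_{L^2}$. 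The $\rVert u\rVert_{H^3}$ prefactor is harmless because the smallness assumption \eqref{size_assu} gives $\rVert u\rVert_{H^3}\le\sqrt{\delta_0}\ll 1$. For $\rVert w\rVert_{H^2}$, I would use \eqref{ssqhrhtlvdsmwanja} with $k=0$ to write $\rVert w\rVert_{H^2}\le \rVert w_t\rVert_{L^2}+\rVert F\rVert_{L^2}\le_C \rVert w_t\rVert_{L^2}+\rVert u\rVert_{H^1}$ via \eqref{BouFestimate11}. Stacking these gives
\[
\rVert u\rVert_{H^4}\le_C \rVert w_{tt}\rVert_{L^2}+\rVert w_t\rVert_{L^2}+\rVert \theta\rVert_{H^4}+\rVert u\rVert_{H^1},
\]
so everything reduces to bounding $\rVert u\rVert_{H^1}$ in terms of the RHS.

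For this final step I would use energy conservation in the form $\rVert u(t)\rVert_{L^2}^2\le 2E_T(0)$ (which holds because both $E_P(t)$ and $E_K(t)$ are nonnegative, as $\rho_0^*$ is the minimizer of the potential energy among rearrangements of $\rho_0$). Writing again $u=w+v$, estimating $\rVert v\rVert_{H^1}\le_C \rVert \theta\rVert_{H^{-1}}\le_C \rVert \theta\rVert_{H^4}$ and $\rVert w\rVert_{H^1}\le \rVert w\rVert_{H^2}$, and expanding $\rVert F\rVert_{L^2}\le \rVert v_t\rVert_{L^2}+\rVert u\rVert_{L^\infty}\rVert\nabla u\rVert_{L^2}$, the key point is that $\rVert v_t\rVert_{L^2}\le_C \rVert u\rVert_{L^2}\le_C\sqrt{E_T(0)}$ by \eqref{timeinv2}, while $\rVert u\rVert_{L^\infty}\rVert\nabla u\rVert_{L^2}\le_C \sqrt{\delta_0}\rVert u\rVert_{H^1}$ by the Sobolev embedding $H^2\hookrightarrow L^\infty$ and \eqref{size_assu}. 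Thus
\[
\rVert u\rVert_{H^1}\le_C \rVert w_t\rVert_{L^2}+\rVert \theta\rVert_{H^4}+\sqrt{E_T(0)}+\sqrt{\delta_0}\,\rVert u\rVert_{H^1},
\]
and the $\sqrt{\delta_0}$ factor lets me absorb the last term into the left-hand side, leaving a clean bound in terms of $\rVert w_t\rVert_{L^2}$, $\rVert \theta\rVert_{H^4}$, and the initial energy.

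The main technical obstacle is the circular structure: $\rVert u\rVert_{H^4}$ reduces to $\rVert u\rVert_{H^1}$ through the $F$-estimates, and $\rVert u\rVert_{H^1}$ in turn involves $\rVert F\rVert_{L^2}$, which involves $\rVert u\rVert_{H^1}$ again through the nonlinearity $u\cdot\nabla u$. The only way this can be closed is by exploiting the smallness \eqref{size_assu} at exactly the right place, so that the coefficient in front of $\rVert u\rVert_{H^1}$ on the RHS carries a factor of $\sqrt{\delta_0}$ and can be absorbed. One subtlety worth flagging is that the natural bound produced this way is actually $\sqrt{E_T(0)}$ rather than $E_T(0)$; since we work in a regime where $E_T(0)\le \epsilon\ll 1$, this is consistent with the statement once the implicit constant $C$ is allowed to depend on $\rho_s$, but strictly speaking the proof delivers the slightly stronger $\sqrt{E_T(0)}$.
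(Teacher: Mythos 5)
Your proof is correct and follows the paper's argument almost step for step: the decomposition $u=w+v$, the Stokes bound $\rVert v\rVert_{H^4}\le_C\rVert\theta\rVert_{H^4}$, and the chain \eqref{leaving2}, \eqref{BouFestimate1}, \eqref{BouFestimate3}, \eqref{ssqhrhtlvdsmwanja}, \eqref{BouFestimate11} reducing everything to $\rVert u\rVert_{H^1}$ are all identical to the paper's. The one place you diverge is the final closure of the circular $\rVert u\rVert_{H^1}$ term. The paper invokes the interpolation inequality $\rVert u\rVert_{H^1}\le C_\eta\rVert u\rVert_{L^2}+\eta\rVert u\rVert_{H^4}$ and absorbs the $\eta\rVert u\rVert_{H^4}$ piece into the left-hand side of the $H^4$ estimate. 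You instead re-expand $u=w+v$ once more, peel $F$ open to isolate the nonlinear term $u\cdot\nabla u$, extract the factor $\rVert u\rVert_{H^2}\le\sqrt{\delta_0}$, and absorb $\sqrt{\delta_0}\rVert u\rVert_{H^1}$ within the $H^1$ estimate itself. Both closures are legitimate; the paper's interpolation trick is a little more economical because it avoids re-opening the already-packaged $F$-estimate \eqref{BouFestimate11}, while your route makes explicit exactly where the smallness is spent. One correction to your final remark: in the regime $E_T(0)\ll 1$ the quantity $\sqrt{E_T(0)}$ is \emph{larger} than $E_T(0)$, so the bound you obtain is slightly \emph{weaker}, not stronger. (For what it is worth, the paper's own proof also produces $\sqrt{E_T(0)}$: the line $\rVert u(t)\rVert_{L^2}\le_C E_T(t)$ should read $\rVert u(t)\rVert_{L^2}^2\le_C E_T(t)$. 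The $E_T(0)$ in the lemma statement is therefore a harmless imprecision shared by both proofs, irrelevant once $E_T(0)\lesssim\epsilon^2$ as in \eqref{rkjstotlasde}.)
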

\begin{proof}
From $w=u-v$, it is trivial that $\rVert u\rVert_{H^4}\le_C \rVert w\rVert_{H^4}+\rVert v\rVert_{H^4}$. Applying Lemma~\ref{Stokes_wellposed} to \eqref{sBvec1}, we have $\rVert v\rVert_{H^4}\le_C \rVert \theta\rVert_{H^2}\le_C\rVert \theta\rVert_{H^4}$. Using this and \eqref{leaving2}, we get
\begin{align}\label{alichandlern}
\rVert u\rVert_{H^4}&\le_C \rVert w_{tt}\rVert_{L^2} + \rVert F_t\rVert_{L^2} +\rVert F\rVert_{H^2}+\rVert \theta\rVert_{H^4}\nonumber\\
& \le_C \rVert w_{tt}\rVert_{L^2} + \rVert \theta\rVert_{H^4} + \rVert w\rVert_{H^2}+\rVert u\rVert_{H^1} + \rVert u\rVert_{H^3}\rVert w_{tt}\rVert_{L^2},
\end{align}
where the last inequality follows from \eqref{BouFestimate1} and \eqref{BouFestimate3}. Using the second inequality in \eqref{ssqhrhtlvdsmwanja} with $k=0$, we get
\begin{align}\label{rjksdpwowed2cumba}
\rVert w\rVert_{H^2}&\le_C \rVert w_t\rVert_{L^2} + \rVert F\rVert_{L^2}\le_C \rVert w_t\rVert_{L^2} + \rVert u\rVert_{H^1},
\end{align}
where the last inequality is due to again \eqref{BouFestimate11}.  Plugging this into \eqref{alichandlern}, we arrive at
\begin{align}\label{howareyou}
\rVert u\rVert_{H^4}\le_C \rVert w_{tt}\rVert_{L^2} + \rVert \theta\rVert_{H^4} + \rVert w_t\rVert_{L^2}+\rVert u\rVert_{H^1}+ \rVert u\rVert_{H^3}\rVert w_{tt}\rVert_{L^2} \le_C \rVert w_{tt}\rVert_{L^2} + \rVert \theta\rVert_{H^4} + \rVert w_t\rVert_{L^2}+\rVert u\rVert_{H^1},
\end{align}
where the last inequality is due to \eqref{size_assu}. Then using the following interpolation theorem (see \cite[Theorem 5.2]{adams2003sobolev}),
\[
\rVert u\rVert_{H^1}\le_C \left(  C_\eta\rVert u\rVert_{L^2} + \eta\rVert u\rVert_{H^4}\right),\text{ for all $\eta>0$,}
\]
we can choose $\eta$ sufficiently small so that \eqref{howareyou} reads 
\[
\rVert u\rVert_{H^4}\le_C  \rVert w_{tt}\rVert_{L^2} + \rVert \theta\rVert_{H^4} + \rVert w_t\rVert_{L^2} + \rVert u\rVert_{L^2}.
\]
Meanwhile, thanks to \eqref{total_energy_inequality}, we have $\rVert u(t)\rVert_{L^2}\le_C E_T(t)\le E_T(0)$. Plugging this into the above estimate, we finish the proof.
\end{proof}

\subsection{Evolution of high Sobolev norms}
In this subsection, we derive estimates for high Sobolev norms for $\theta$ and $w$. As mentioned in the beginning of the previous subsection, we will estimate evolution of the time derivatives of $w$ instead of the spatial derivatives, and this is due to the presence of viscosity. For $\theta$ as well, the presence of viscosity of fluids may induce non-zero vertical derivatives of $\theta$, even if the support of $\theta_0$ is away from the boundary as pointed out in \cite[Section 2.1]{dalibard2023long}. The following lemma gives  sufficient control of $\theta$ near the boundary.
\begin{lemma}\cite[Lemma 2.1]{dalibard2023long}\label{vanishing_boundary_boy}
Let $(\theta(t),u(t))$ be a  solution to \eqref{Boussiesq_pertur} such that $\theta_0\in H^2_0(\Omega)\cap H^4(\Omega)$.  Then for all $t>0$, it holds that
\[
\theta(t)=\partial_2\theta(t) =\partial_{22}\overline{\theta}(t)=0,\text{ on $\partial\Omega$,}
\]
where $\overline{\theta}(x_2):=\frac{1}{2\pi}\int_\mathbb{T}\theta(x_1,x_2)dx_1$.
\end{lemma}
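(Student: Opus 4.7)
The plan is to propagate trace information at the boundary $\partial\Omega$ by evaluating the density transport equation (and its differentiated or averaged versions) on $\partial\Omega$, exploiting the no-slip condition $u|_{\partial\Omega}=0$ to annihilate most transport-type terms. The three claims will be established in order, each step reusing those before it.

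For the first claim, restricting $\theta_t+u\cdot\nabla\theta=-\partial_2\rho_s u_2$ to $\partial\Omega$ makes every term proportional to $u$ or $u_2$ vanish, leaving $\theta_t|_{\partial\Omega}=0$. Since $\theta_0\in H^2_0(\Omega)$ gives $\theta_0|_{\partial\Omega}=0$, integrating in $t$ yields the first statement. For the second claim, I would differentiate the density equation once in $x_2$ and restrict to $\partial\Omega$. The key point is that incompressibility together with $u_1|_{\partial\Omega}=0$ forces $\partial_2 u_2|_{\partial\Omega}=-\partial_1 u_1|_{\partial\Omega}=0$. The right-hand side then vanishes, while on the left $u\cdot\nabla\partial_2\theta$ is killed by $u|_{\partial\Omega}=0$, and $\partial_2 u\cdot\nabla\theta=\partial_2 u_1\partial_1\theta+\partial_2 u_2\partial_2\theta$ vanishes term by term (using $\partial_1\theta|_{\partial\Omega}=0$ from the first claim and $\partial_2 u_2|_{\partial\Omega}=0$). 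Hence $\partial_t\partial_2\theta|_{\partial\Omega}=0$, and $\partial_2\theta_0|_{\partial\Omega}=0$ from $H^2_0$ closes this step.

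For the third claim, I would take the horizontal average of the density equation. The incompressibility identity gives $\overline{u\cdot\nabla\theta}=\partial_2\overline{u_2\theta}$ (the $\partial_1$-derivative averages to zero on the torus), and $\overline{u_2}\equiv 0$ follows from $\partial_2\overline{u_2}=-\overline{\partial_1 u_1}=0$ combined with $u_2|_{\partial\Omega}=0$, so the averaged equation simplifies to $\partial_t\overline\theta+\partial_2\overline{u_2\theta}=0$. Differentiating twice in $x_2$ yields $\partial_t\partial_{22}\overline\theta=-\partial_{222}\overline{u_2\theta}$, and a direct expansion of $\partial_{222}(u_2\theta)$ shows that each of the four resulting terms carries, after restriction to $\partial\Omega$, a factor from the list $\{u_2,\partial_2 u_2,\theta,\partial_2\theta\}$, all of which vanish there by the established boundary conditions. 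Hence $\partial_t\partial_{22}\overline\theta|_{\partial\Omega}=0$, and the conclusion for $t>0$ follows by integrating in time.

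The subtle point I anticipate is the initial compatibility in step three: the hypothesis $\theta_0\in H^2_0\cap H^4$ supplies only first-order trace vanishing, so $\partial_{22}\overline\theta_0|_{\partial\Omega}$ is not automatically zero. The argument implicitly needs this additional compatibility on the initial datum (consistent with the fact that the admissible class of perturbations in Theorem~\ref{main_Boussinesq} is prepared to match the boundary structure of the Boussinesq system); once it is granted, the boundary computation above preserves the vanishing of $\partial_{22}\overline\theta|_{\partial\Omega}$ for all $t>0$, completing the proof.
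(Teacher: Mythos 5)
Your approach---restricting the transport equation, then its $\partial_2$-derivative, then $\partial_{22}$ of its horizontal average, to $\partial\Omega$, and using the no-slip and incompressibility conditions to annihilate every term---is the natural one, and since the paper merely asserts the proof is identical to \cite[Lemma~2.1]{dalibard2023long} without writing it out, this is presumably also what that reference does. Your first two steps are correct, as is the algebra in step three yielding $\partial_t\partial_{22}\overline{\theta}|_{\partial\Omega}=0$.

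You have also put your finger on the real delicate point, but you should press it harder than you do. Your computation shows the trace $\partial_{22}\overline{\theta}(t)|_{\partial\Omega}$ is \emph{conserved}, hence equals $\partial_{22}\overline{\theta_0}|_{\partial\Omega}$ for all $t$; and that initial trace is genuinely not forced to vanish by $\theta_0\in H^2_0(\Omega)\cap H^4(\Omega)$, since membership in $H^2_0$ only forces $\theta_0$ and $\nabla\theta_0$ to vanish on $\partial\Omega$. For instance, $\theta_0(x)=c\,x_2^2(1-x_2)^2$ for small $c$ lies in $H^2_0(\Omega)\cap H^4(\Omega)$ and meets the smallness requirement of Theorem~\ref{main_Boussinesq}, yet $\partial_{22}\overline{\theta_0}(0)=2c\neq 0$. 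Contrary to your parenthetical, the admissible class in Theorem~\ref{main_Boussinesq} is the same $H^2_0\cap H^4$, so the compatibility condition $\partial_{22}\overline{\theta_0}|_{\partial\Omega}=0$ is not imposed anywhere in the paper; it would have to be added as an explicit hypothesis on $\theta_0$ (or the lemma's conclusion weakened to the statement that $\partial_{22}\overline{\theta}(t)|_{\partial\Omega}$ is time-independent, which is what the dynamics actually give). The omission matters downstream: Lemma~\ref{energy_lemma_viscous}, invoked with $f=\theta$ in the proof of Proposition~\ref{bouenergy_es}, explicitly requires $\partial_{22}\overline{f}|_{\partial\Omega}=0$.
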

\begin{proof}
The proof of the lemma is identical to \cite[Lemma 2.1]{dalibard2023long}. 
\end{proof}



The main result of this subsection is the next proposition. 
\begin{proposition}\label{bouenergy_es}
Suppose that a solution $(\theta(t),u(t))$ to \eqref{Boussiesq_pertur}.  Then there exists $\delta_0=\delta(\rho_s)>0$ such that if \eqref{size_assu} holds for $\delta_0$, then it holds that
\begin{align}\label{rakwk123bo}
\frac{1}{2}\frac{d}{dt}\left(\rVert \Delta^2\theta\rVert_{L^2}^2+\rVert w_t\rVert_{L^2}^2+\rVert w_{tt}\rVert_{L^2}^2 \right)&\le -C \left(\rVert \partial_1\Delta\theta\rVert_{L^2}^2 + \rVert u\rVert_{H^5}^2+\rVert w\rVert_{H^5}^2 \right) + C(A_1+A_2),
\end{align}
where
\begin{equation}
\begin{aligned}\label{A123def}
A_1&:= \left( \rVert u_2\rVert_{W^{2,\infty}} + \rVert u_2\rVert_{H^3} +  \rVert \Delta^2w \rVert_{L^2}\right)\rVert \Delta^2\theta\rVert_{L^2}\\
A_2&:=\rVert \partial_1\theta \rVert_{L^2}^2 +\rVert u\rVert_{H^1}^2 +\rVert w_{tt}\rVert_{L^2}^2+ \rVert w\rVert_{H^2}^2.
\end{aligned}
\end{equation}
\end{proposition}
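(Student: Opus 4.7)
The plan is to derive three energy identities, one for each of $\rVert \Delta^2\theta\rVert_{L^2}^2$, $\rVert w_t\rVert_{L^2}^2$, and $\rVert w_{tt}\rVert_{L^2}^2$, by differentiating in time and invoking \eqref{Boussiesq_pertur}, \eqref{wtt_1}, and \eqref{wtt_2} respectively, and then sum them to produce the stated bound. The damping $\rVert \partial_1\Delta \theta\rVert_{L^2}^2$ will arise from the linear forcing $-\partial_2\rho_s u_2$ in the $\theta$-equation after splitting $u = v + w$ and using the Biot--Savart identity $\Delta^2 v_2 = \partial_1^2\theta$ furnished by Lemma~\ref{rigorousnavier} applied to \eqref{sBvec1}. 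The viscous dissipations $\rVert \nabla w_t\rVert_{L^2}^2$ and $\rVert \nabla w_{tt}\rVert_{L^2}^2$ will be upgraded to $\rVert w\rVert_{H^5}^2$ via \eqref{leaving3}; finally $\rVert u\rVert_{H^5}^2$ will follow from $u = v + w$, the Stokes bound $\rVert v\rVert_{H^5}\le C\rVert \theta\rVert_{H^3}$ of Lemma~\ref{Stokes_wellposed}, and a Poincar\'e argument in $x_1$ that controls the $x_1$-varying part of $\theta$ in $H^3$ by $\rVert \partial_1\Delta\theta\rVert_{L^2}$ (the $x_1$-mean $\overline\theta$ does not contribute to $v$ at all, since $(0,\overline\theta)^T$ is a gradient field).

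For the density, I compute $\tfrac{d}{dt}\rVert \Delta^2\theta\rVert_{L^2}^2 = 2\int \Delta^2\theta \cdot \Delta^2(-u\cdot\nabla\theta -\partial_2\rho_s u_2)$ and split $u = v + w$. Expanding $\Delta^2(\partial_2\rho_s u_2)$ by Leibniz (using that $\partial_2\rho_s$ depends only on $x_2$), the dominant contribution is
\[
-\int \partial_2\rho_s \Delta^2 v_2 \cdot \Delta^2\theta = -\int \partial_2\rho_s \partial_1^2\theta \cdot \Delta^2\theta.
\]
Integrating by parts once in the periodic variable $x_1$ and then twice in space, using the boundary conditions $\partial_1\theta = \partial_2\partial_1\theta = 0$ on $\partial\Omega$ (inherited from $\theta = \partial_2\theta = 0$ on $\partial\Omega$ by Lemma~\ref{vanishing_boundary_boy}), this term becomes $\int \partial_2\rho_s (\Delta\partial_1\theta)^2$ plus cross terms in which $\partial_2$ lands on $\partial_2\rho_s$. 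Because $-\partial_2\rho_s \ge \gamma$, the leading piece yields $\le -\gamma \rVert \partial_1\Delta\theta\rVert_{L^2}^2$, and the cross terms, after Young's inequality and the elliptic bound $\rVert \partial_1\theta\rVert_{H^1}\le C\rVert \Delta\partial_1\theta\rVert_{L^2}$, contribute at most $\epsilon\rVert \partial_1\Delta\theta\rVert_{L^2}^2 + C_\epsilon \rVert \partial_1\theta\rVert_{L^2}^2$, the latter being part of $A_2$. The companion term $-\int \partial_2\rho_s \Delta^2 w_2\cdot\Delta^2\theta$ supplies the $\rVert \Delta^2 w\rVert_{L^2}\rVert \Delta^2\theta\rVert_{L^2}$ summand of $A_1$, and in the transport contribution $-\int\Delta^2\theta\cdot\Delta^2(u\cdot\nabla\theta)$, the top-order piece $\int (u\cdot\nabla)\Delta^2\theta\cdot\Delta^2\theta$ vanishes by incompressibility and $u|_{\partial\Omega}=0$, leaving the commutator $[\Delta^2, u\cdot\nabla]\theta$. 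Using $\partial_1 u_1 = -\partial_2 u_2$ to convert horizontal derivatives of $u_1$ to vertical derivatives of $u_2$, this commutator is controlled by $(\rVert u_2\rVert_{W^{2,\infty}} + \rVert u_2\rVert_{H^3})\rVert \Delta^2\theta\rVert_{L^2}$, the remaining $\theta$-factors being reabsorbed via \eqref{size_assu} and \eqref{embd1bo}, thereby exhausting the rest of $A_1$.

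For $w_t$ and $w_{tt}$, testing \eqref{wtt_1} against $w_t$ and \eqref{wtt_2} against $w_{tt}$ (the pressure dropping because $\nabla\cdot w_t=\nabla\cdot w_{tt}=0$ and no-slip holds) gives
\[
\tfrac{1}{2}\tfrac{d}{dt}\rVert w_t\rVert_{L^2}^2 = -\rVert\nabla w_t\rVert_{L^2}^2 -\int w_t\cdot F_t,\qquad \tfrac{1}{2}\tfrac{d}{dt}\rVert w_{tt}\rVert_{L^2}^2 = -\rVert\nabla w_{tt}\rVert_{L^2}^2 -\int w_{tt}\cdot F_{tt}.
\]
The forcing pairings are then estimated by $|\int w_t\cdot F_t|\le \rVert \nabla w_t\rVert_{L^2}\rVert F_t\rVert_{H^{-1}}$ and the analogue for $F_{tt}$, plugging in Lemma~\ref{endless_estimates1}\eqref{BouFestimate3}--\eqref{BouFestimate5}, Young's inequality, and the smallness \eqref{size_assu}; all residual contributions fall into $A_2$. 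Finally \eqref{leaving3}, together with the Poincar\'e inequality $\rVert w_{tt}\rVert_{H^1}\le C\rVert \nabla w_{tt}\rVert_{L^2}$ and the $F$-estimates \eqref{BouFestimate2},\eqref{BouFestimate4} (again absorbed via \eqref{size_assu}), upgrades $\rVert \nabla w_{tt}\rVert_{L^2}^2$ to $\rVert w\rVert_{H^5}^2$ modulo $A_2$ terms, and $\rVert u\rVert_{H^5}^2\le C(\rVert w\rVert_{H^5}^2 + \rVert \partial_1\Delta\theta\rVert_{L^2}^2)$ is immediate from the discussion in the opening paragraph.

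The main obstacle I expect is the integration-by-parts step for the damping term in the $\rVert \Delta^2\theta\rVert_{L^2}^2$ identity: every cross term produced when $\partial_2$ strikes $\partial_2\rho_s$ is \emph{a priori} of the same order as the main dissipation $\rVert\partial_1\Delta\theta\rVert_{L^2}^2$, so absorbing them cleanly into the $\rVert\partial_1\theta\rVert_{L^2}^2$ piece of $A_2$ requires the elliptic/Poincar\'e bound $\rVert \partial_1\theta\rVert_{H^1}\le C\rVert \Delta\partial_1\theta\rVert_{L^2}$ combined with a small-$\epsilon$ Young's inequality. A secondary bookkeeping point is organizing the commutator $[\Delta^2, u\cdot\nabla]\theta$ so that only $u_2$-norms (rather than the full $\rVert u\rVert_{H^4}$) appear at the critical order, which is done by systematic use of incompressibility.
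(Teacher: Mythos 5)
Your overall architecture matches the paper's: three energy identities (for $\|\Delta^2\theta\|_{L^2}^2$, $\|w_t\|_{L^2}^2$, $\|w_{tt}\|_{L^2}^2$), extraction of the damping $\|\partial_1\Delta\theta\|_{L^2}^2$ from the $v_2$-part of the linear forcing via the biharmonic stream function, Young-absorption of the cross terms, $F$-estimates for the $w$-evolutions, and upgrading $\|\nabla w_{tt}\|_{L^2}$ to $\|w\|_{H^5}$ by \eqref{leaving3}. However, two steps in your treatment of the transport term and of $\|v\|_{H^5}$ are genuine gaps.

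The first and more serious gap is in the commutator estimate for $\int \Delta^2(u\cdot\nabla\theta)\Delta^2\theta$. You claim that, after using $\partial_1 u_1=-\partial_2 u_2$, the commutator $[\Delta^2, u\cdot\nabla]\theta$ paired against $\Delta^2\theta$ is controlled by $\bigl(\|u_2\|_{W^{2,\infty}}+\|u_2\|_{H^3}\bigr)\|\Delta^2\theta\|_{L^2}$ (up to absorbing a factor via \eqref{size_assu}). This cannot be right as stated, because incompressibility eliminates $\partial_1 u_1$ but not $\partial_2 u_1$, so not every first-order derivative of $u_1$ can be traded for $u_2$. The paper's Lemma~\ref{energy_lemma_viscous} (which it imports rather than rederives) shows that the honest bound carries an additional top-order term $\|u\|_{H^5}\|\partial_1\Delta\theta\|_{L^2}\|\Delta^2\theta\|_{L^2}$; the residual $u_1$-derivatives are re-expressed, via integrations by parts, in a form where $u$ appears at order $5$ but is paired with the damping factor $\|\partial_1\Delta\theta\|_{L^2}$. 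This extra term is precisely what forces the coercive $\|u\|_{H^5}^2$ in \eqref{rakwk123bo}: writing $u=v+w$ with $\|v\|_{H^5}\le C\|\partial_1\Delta\theta\|_{L^2}$, that term splits into $\|w\|_{H^5}^2\|\Delta^2\theta\|_{L^2}+C\|\partial_1\Delta\theta\|_{L^2}^2\|\Delta^2\theta\|_{L^2}$, both absorbable because $\|\Delta^2\theta\|_{L^2}\le\sqrt{\delta_0}\ll 1$. Your version of the commutator bound would make the proposition easier than it actually is and would leave the $\|u\|_{H^5}^2$ coercivity unused; you need the full structure of Lemma~\ref{energy_lemma_viscous}.

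The second gap is the route to $\|v\|_{H^5}\le C\|\partial_1\Delta\theta\|_{L^2}$. You propose the Stokes bound $\|v\|_{H^5}\le C\|\theta\|_{H^3}$ from Lemma~\ref{Stokes_wellposed} combined with a Poincar\'e argument in $x_1$ on the mean-free part $\theta-\overline\theta$. But Poincar\'e in $x_1$ gives $\|\theta-\overline\theta\|_{H^3}\le C\|\partial_1\theta\|_{H^3}$, which is one order too high, and Lemma~\ref{bilap} only controls even Sobolev norms ($H^2$, $H^4$) by $\Delta$ and $\Delta^2$; you cannot directly conclude $\|\theta-\overline\theta\|_{H^3}\le C\|\partial_1\Delta\theta\|_{L^2}$ from these ingredients. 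The paper avoids this issue entirely by going through the biharmonic stream function (Lemma~\ref{rigorousnavier}): $v=\nabla^\perp\Psi$ with $\Delta^2\Psi=\partial_1\theta$, so $\|v\|_{H^5}\le\|\Psi\|_{H^6}\le C\|\partial_1\theta\|_{H^2}$ by Lemma~\ref{lemma_psi_stokes}, and then $\|\partial_1\theta\|_{H^2}\le C\|\partial_1\Delta\theta\|_{L^2}$ by Lemma~\ref{bilap} applied to $f=\partial_1\theta$ (whose boundary conditions are inherited from Lemma~\ref{vanishing_boundary_boy}). You should replace the Poincar\'e argument by this stream-function estimate; you already invoke the stream function for the damping piece, so the tool is at hand.
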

 

\begin{proof} Let us compute the evolution of $\rVert \Delta^2\theta\rVert_{L^2}^2, \rVert w_t\rVert_{L^2}^2,\rVert w_{tt}\rVert_{L^2}^2$ separately.

 \textbf{Computation for $\rVert \Delta^2\theta\rVert_{L^2}$.}
 From \eqref{Boussiesq_pertur}, we have
\begin{align}\label{whyogfots2}
\frac{1}2\frac{d}{dt}\rVert \Delta^2 \theta\rVert_{L^2}^2 = -\int \Delta^2 (u\cdot\nabla \theta)\Delta^2 \theta dx +\int \Delta^2(-\partial_2\rho_s u_2)\Delta^2 \theta dx
\end{align}
We simplify the linear term first. We rewrite it as
\begin{align}\label{dissipative_term_Boussines}
\int \Delta^2 (-\partial_2\rho_s u_2)\Delta^2 \theta dx&=\int \left(\Delta^2 (-\partial_2\rho_s u_2) -(-\partial_2\rho_s\Delta^2 u_2)\right) \Delta^2\theta dx + \int -\partial_2\rho_s \Delta^2 u_2 \Delta^2 \theta dx =: I_1+ I_2,
\end{align}
while the second integral can be further decomposed into
\begin{align}\label{dissipative_term_Boussines11232}
I_2 = \int -\partial_2\rho_s \Delta^2 (u_2-v_2) \Delta^2 \theta dx +\int -\partial_2\rho_s \Delta^2 v_2 \Delta^2 \theta dx=: I_{21}+I_{22}.
\end{align}
Since $\rho_s$ is independent of $x_1$, the  the stream function of $v$ in \eqref{streamv2} can be restated as
 \begin{align}\label{stream22v2}
  \begin{cases}
  \Delta^2 \Psi = \partial_1 \theta & \text{ in $\Omega$,}\\
  \Psi=\nabla \Psi = 0  & \text{ on $\partial\Omega$.}
  \end{cases}
  \end{align} Thus we can rewrite $I_{22}$ as
\begin{align}\label{defiu1sdsxc2}
I_{22} &= \int -\partial_2\rho_s \partial_1\Delta^2\Psi \Delta^2\theta dx = \int \partial_2\rho_s\partial_1\theta \partial_1\Delta^2\theta dx =  \int \Delta (\partial_2\rho_s \partial_1\theta) \partial_1\Delta\theta dx\nonumber\\
& = \int \partial_2\rho_s |\partial_1\Delta \theta|^2 dx + \int (\partial_{222}\rho_s\partial_1\theta + 2\partial_{22}\rho_s\partial_{12}\theta)\partial_1\Delta\theta dx\nonumber\\
& =: I_{221}+I_{222},
\end{align}
where the integration by parts in the third equality is justified by the boundary condition of $\theta$ in Lemma~\ref{vanishing_boundary_boy}. Hence, it follows from \eqref{dissipative_term_Boussines}, \eqref{dissipative_term_Boussines11232} and \eqref{defiu1sdsxc2} that the linear term can be decomposed as
\begin{align}\label{rjworstlex}
\int \Delta^2 (-\partial_2\rho_s u_2)\Delta^2 \theta dx - I_{221} = I_1 +I_{21} + I_{222}.
\end{align}
For $I_1$ in \eqref{dissipative_term_Boussines}, we estimate
\[
|I_1|\le  \left(\rVert\partial_2\rho_s\rVert_{H^4}\rVert u_2\rVert_{L^\infty} + \rVert \partial_2\rho_s\rVert_{L^\infty}\rVert u_2\rVert_{H^3}\right)\rVert \Delta^2\theta\rVert_{L^2}\le C\rVert u_2\rVert_{H^3}\rVert \Delta^2\theta\rVert_{L^2},
\]
where the last inequality follows from \eqref{steadtysd2sd} and \eqref{embd1bo}.
For $I_{21}$,  we apply the Cauchy-Schwarz inequality and the definition of $w$ to obtain
\[
|I_{21}|\le \rVert \partial_2\rho_s\rVert_{L^\infty}\rVert w\rVert_{H^4}\rVert \Delta^2\theta\rVert_{L^2}\le C\rVert w\rVert_{H^4}\rVert \Delta^2\theta\rVert_{L^2}.
\]
For $I_{222}$, we have
\[
|I_{222}|\le  \rVert \partial_{222}\rho_s\rVert_{L^\infty}\rVert \partial_1\theta\rVert_{H^1}\rVert \partial_1\Delta\theta\rVert_{L^2}\le C\rVert \partial_1\theta \rVert_{H^1}\rVert \partial_1\Delta\theta\rVert_{L^2}.
\]
Using the interpolation inequality, we have $\rVert \partial_1\theta\rVert_{H^1}\le C \rVert \partial_1\theta\rVert_{L^2}^{1/2}\rVert \partial_1\theta\rVert_{H^2}^{1/2}\le \rVert \partial_1\theta\rVert_{L^2}^{1/2}\rVert \partial_1\Delta\theta\rVert_{L^2}^{1/2}$, where the last inequality is due to Lemma~\ref{bilap}. Hence we have
\[
|I_{222}|\le C \rVert \partial_1\theta\rVert_{L^2}^{1/2}\rVert \partial_1\Delta\theta\rVert_{H^2}^{3/2}\le  C_\eta\rVert \partial_1\theta\rVert_{L^2}^{2}+\eta \rVert \partial_1\theta\rVert_{H^2}^{2},\text{ for any $\eta>0$,}
\]
where the last inequality follows from  Young's inequality. Collecting the above estimates, we arrive at
\begin{align}\label{wanja1}
|I_{1}|+|I_{21}|+|I_{222}|\le C(\rVert u_2\rVert_{H^3} + \rVert w\rVert_{H^4})\rVert \Delta^2\theta\rVert_{L^2} + C_\eta\rVert \partial_1\theta \rVert_{L^2}^2 + \eta\rVert \partial_1\Delta\theta\rVert_{L^2}^2, \text{ for any $\eta>0$}.
\end{align}
Note that $I_{221}\le -\gamma \rVert \partial_1\Delta\theta\rVert_{L^2}^2\le_C - \rVert \partial_1\Delta\theta\rVert_{L^2}^2$ which follows from the upper bound of $\partial_2\rho_s$ in \eqref{steadtysd2sd} and the definition of $I_{221}$ in \eqref{defiu1sdsxc2}. Hence, we can choose $\eta>0$ in \eqref{wanja1} small enough so that  \eqref{rjworstlex} reads
\begin{align}\label{rjworstlexdsd2deng}
\int \Delta^2 (-\partial_2\rho_s u_2)\Delta^2 \theta dx\le -C\rVert \partial_1\Delta\theta\rVert_{L^2}^2 + C(\rVert u_2\rVert_{H^3} + \rVert w\rVert_{H^4})\rVert \Delta^2\theta\rVert_{L^2} + C\rVert \partial_1\theta \rVert_{L^2}^2.
\end{align} 
This gives an estimate for the second integral in the right-hand side of \eqref{whyogfots2}. 

In order to estimate the nonlinear term $\int \Delta^2 (u\cdot\nabla \theta)\Delta^2 \theta dx$ in \eqref{whyogfots2}, we recall the following Lemma:
 \begin{lemma}\label{energy_lemma_viscous}\cite[Lemma 4.5]{park2024stability}
Let $u$ be a smooth divergence free vector field such that $U=0$ on $\partial \Omega$ and $f$ be a smooth scalar-valued function such that $f=\partial_2 f=\partial_{2}^2 \overline{f}=0$ on $\partial \Omega$ where $\overline{f}:=\frac{1}{2\pi}\int_{\mathbb{T}}f(x_1,x_2)dx_1$. Then, we have
\begin{align*}
&\left| \int_{\Omega}\Delta^2(u\cdot\nabla f)\Delta^2f dx\right|\le_C\left(\rVert U_2\rVert_{H^3(\Omega)} +  \rVert U_2\rVert_{W^{2,\infty}(\Omega)}\right)\rVert \Delta^2f\rVert_{L^2(\Omega)}^2 +\rVert U\rVert_{H^5(\Omega)}\rVert \partial_1\Delta f\rVert_{L^2(\Omega)}\rVert \Delta^2 f\rVert_{L^2(\Omega)},
\end{align*}
where $C>0$ is a universal constant.
\end{lemma}

Thanks to Lemma~\ref{vanishing_boundary_boy} and the boundary condition of $u$ in \eqref{bdcondition}, we can apply the above lemma with $U=u$ and $f=\theta$ which immediately gives us
\begin{align}\label{n1first}
\left|\int \Delta^2 (u\cdot\nabla \theta)\Delta^2 \theta dx\right|&\le_C  \left(\rVert u_2\rVert_{H^3} +  \rVert u_2\rVert_{W^{2,\infty}}\right)\rVert \Delta^2\theta\rVert_{L^2}^2 +\rVert u\rVert_{H^5}\rVert \partial_1\Delta \theta\rVert_{L^2}\rVert \Delta^2 \theta\rVert_{L^2}.
\end{align}
To control the highest order term, $\rVert u\rVert_{H^5}$, we rewrite it as $u=w+v$, thus
\[
\rVert u\rVert_{H^5}\le \rVert w\rVert_{H^5} + \rVert v\rVert_{H^5}.
\]
Applying Lemma~\ref{lemma_psi_stokes} to \eqref{stream22v2}, we have
\begin{align}\label{Wanjaplease}
\rVert v\rVert_{H^5}\le \rVert \Phi\rVert_{H^6}\le C\rVert \partial_1\theta\rVert_{H^2}\le C\rVert \partial_1\Delta \theta\rVert_{L^2},
\end{align}
where the last inequality is due to Lemma~\ref{bilap}. With these estimates, the last term in \eqref{n1first} can be bounded as
\begin{align*}
\rVert u\rVert_{H^5}\rVert \partial_1\Delta \theta\rVert_{L^2}\rVert \Delta^2 \theta\rVert_{L^2}&\le \rVert w \rVert_{H^5}\rVert \partial_1\Delta \theta\rVert_{L^2}\rVert \Delta^2 \theta\rVert_{L^2} + C\rVert \partial_1\Delta \theta\rVert_{L^2}^2\rVert \Delta^2 \theta\rVert_{L^2} \\
&\le \rVert w \rVert_{H^5}^2\rVert \Delta^2\theta\rVert_{L^2} + C\rVert \partial_1\Delta \theta\rVert_{L^2}^2\rVert \Delta^2 \theta\rVert_{L^2} ,
\end{align*}
where we used Young's inequality for the last inequality. Plugging this into \eqref{n1first}, we get
\[
\left|\int \Delta^2 (u\cdot\nabla \theta)\Delta^2 \theta dx\right| \le_C   \left(\rVert u_2\rVert_{H^3} +  \rVert u_2\rVert_{W^{2,\infty}}\right)\rVert \Delta^2\theta\rVert_{L^2}^2  + \rVert w \rVert_{H^5}^2\rVert \Delta^2\theta\rVert_{L^2} + \rVert \partial_1\Delta \theta\rVert_{L^2}^2\rVert \Delta^2 \theta\rVert_{L^2}.
\] Plugging this and \eqref{rjworstlexdsd2deng} into \eqref{whyogfots2}, we arrive at
\begin{align*}
\frac{1}2\frac{d}{dt}\rVert \Delta^2 \theta\rVert_{L^2}^2&\le -C\left(1-C\rVert \Delta^2\theta\rVert_{L^2}\right)\rVert \partial_1\Delta\theta\rVert_{L^2}^2 \nonumber\\
& \ +  C\left(\rVert u_2\rVert_{H^3} +  \rVert u_2\rVert_{W^{2,\infty}}\right)\rVert \Delta^2\theta\rVert_{L^2}^2 +C\left(\rVert u_2\rVert_{H^3} +  \rVert \Delta^2w \rVert_{L^2}\right)\rVert \Delta^2\theta\rVert_{L^2}\nonumber \\
& \ +C\rVert w \rVert_{H^5}^2\rVert \Delta^2\theta\rVert_{L^2} +C\rVert \partial_1\theta \rVert_{L^2}^2.
\end{align*}
 Using \eqref{size_assu}, we make this slightly simpler, that is,
\begin{align}\label{crazybitch24r2sd}
\frac{1}2\frac{d}{dt}\rVert \Delta^2 \theta\rVert_{L^2}^2&\le -C\rVert \partial_1\Delta\theta\rVert_{L^2}^2 \nonumber\\
& \ + C\left( \rVert u_2\rVert_{W^{2,\infty}} + \rVert u_2\rVert_{H^3} +  \rVert \Delta^2w \rVert_{L^2}\right)\rVert \Delta^2\theta\rVert_{L^2}\nonumber \\
& \ +C\rVert w \rVert_{H^5}^2\rVert \Delta^2\theta\rVert_{L^2} +C\rVert \partial_1\theta \rVert_{L^2}^2.
\end{align}

 \textbf{Computation for $\rVert w_t\rVert_{L^2}$.}
 Now we turn to derive an estimate for $\rVert w_{t}\rVert_{L^2}^2$.  Using \eqref{wtt_1}, we compute
\begin{align}\label{wnawjasd1trhsd}
\frac{1}2\frac{d}{dt}\rVert w_{t}\rVert_{L^2}^2 &= -\int F_{t} w_{t} dx - \int |\nabla w_{t}|^2dx\le C\rVert F_{t}\rVert_{L^2}\rVert w_{t}\rVert_{L^2} - \rVert \nabla w_{t}\rVert_{L^2}^2\nonumber\\
&\le_C \rVert F_{t}\rVert_{L^2}^2 - C\rVert \nabla w_{t}\rVert_{L^2}^2\nonumber\\
&\le_C \left(\rVert w\rVert_{H^2}^2+\rVert u\rVert_{H^1}^2 + \rVert w_{tt}\rVert_{L^2}^2  \right) - C\rVert \nabla w_{t}\rVert_{L^2}^2,
\end{align}
where the second last inequality is due to Young's inequality and the last inequality is from \eqref{BouFestimate3} and \eqref{size_assu}.

 \textbf{Computation for $\rVert w_{tt}\rVert_{L^2}$.}
 Next step, we  estimate for $\rVert w_{tt}\rVert_{L^2}^2$ in a similar way. Using \eqref{wtt_2}, we compute
\begin{align*}
\frac{1}2\frac{d}{dt}\rVert w_{tt}\rVert_{L^2}^2 &= -\int F_{tt} w_{tt} dx - \int |\nabla w_{tt}|^2dx\le C\rVert F_{tt}\rVert_{H^{-1}}\rVert\nabla w_{tt}\rVert_{L^2} -\rVert \nabla w_{tt}\rVert_{L^2}^2\\
&\le_C \rVert F_{tt}\rVert_{H^{-1}}^2 - C\rVert \nabla w_{tt}\rVert_{L^2}^2,
\end{align*}
where the first inequality is due to the Poincar\'e inequality, $\rVert w_{tt}\rVert_{H^1}\le_C\rVert \nabla w_{tt}\rVert_{L^2}$ and the last inequality is due to Young's inequality. Using \eqref{BouFestimate5}, we arrive at
\begin{align}\label{arriv21sd}
\frac{1}2\frac{d}{dt}\rVert w_{tt}\rVert_{L^2}^2 \le_C  \rVert w_{tt}\rVert_{L^2}^2+ \rVert w\rVert_{H^2}^2+\rVert u\rVert_{H^1}^2 - C\rVert \nabla w_{tt}\rVert_{L^2}^2.
\end{align}
Again using $\rVert w_{tt}\rVert_{H^1}\le C\rVert \nabla w_{tt}\rVert_{L^2}$ and combining this with \eqref{leaving3}, we get
\begin{align*}
\rVert \nabla w_{tt}\rVert_{L^2}^2&\ge C\rVert w\rVert_{H^5}^2 - C\left(\rVert F\rVert_{H^3}^2 + \rVert F_t\rVert_{H^1}^2\right) \\
&\ge C\rVert w\rVert_{H^5} - C\left( \rVert u\rVert_{H^4}^2 \rVert w_{tt}\rVert_{L^2}^2 + \rVert w\rVert_{H^2}^2+\rVert u\rVert_{H^3}^2\right)\\
&\ge C\rVert w\rVert_{H^5} - C\left(\rVert w_{tt}\rVert_{L^2}^2 + \rVert w\rVert_{H^2}^2+\rVert u\rVert_{H^3}^2\right)
\end{align*}
where the second last inequality is due to \eqref{BouFestimate2} and \eqref{BouFestimate4}, and the last inequality is due to  \eqref{size_assu}. Thus, from \eqref{arriv21sd}, we arrive that
\begin{align}\label{arri2223123v21sd}
\frac{1}2\frac{d}{dt}\rVert w_{tt}\rVert_{L^2}^2 \le_C  \rVert w_{tt}\rVert_{L^2}^2+ \rVert w\rVert_{H^2}^2+\rVert u\rVert_{H^3}^2 - C\rVert  w\rVert_{H^5}^2.
\end{align}

Now, we combine all the estimates   \eqref{crazybitch24r2sd}, \eqref{wnawjasd1trhsd} and \eqref{arri2223123v21sd}, we arrive at
\begin{align}
\frac{1}{2}\frac{d}{dt}\left(\rVert \Delta^2\theta\rVert_{L^2}^2+\rVert w_t\rVert_{L^2}^2+\rVert w_{tt}\rVert_{L^2}^2 \right)&\le \underbrace{-C \left(\rVert \partial_1\Delta\theta\rVert_{L^2}^2 +\rVert w\rVert_{H^5}^2 \right) +  C\rVert w \rVert_{H^5}^2\rVert \Delta^2\theta\rVert_{L^2} + \rVert u\rVert_{H^3}^2}_{=:J_1}\label{rj2sdpposdsd}\\
& \  + C\left( \rVert u_2\rVert_{W^{2,\infty}} + \rVert u_2\rVert_{H^3} +  \rVert \Delta^2w \rVert_{L^2}\right)\rVert \Delta^2\theta\rVert_{L^2}\nonumber\\
& \ +C\left(\rVert \partial_1\theta \rVert_{L^2}^2  +\rVert w_{tt}\rVert_{L^2}^2+ \rVert w\rVert_{H^2}^2 \right).\nonumber
\end{align}

 Let us simplify the terms $J_1$.  From \eqref{size_assu}, we can assume $\rVert \Delta^2 \theta\rVert_{L^2}$ is sufficiently small so that
 \begin{align}\label{wanja2}
 J_1 \le  -C\left(\rVert \partial_1\Delta\theta\rVert_{L^2}^2 +\rVert w\rVert_{H^5}^2 \right)  + \rVert u\rVert_{H^3}^2.
 \end{align}
 Using $w=u-v$, we also have
  \[
 \rVert u\rVert_{H^5}\le_C \rVert w\rVert_{H^5} + \rVert v\rVert_{H^5}\le_C \rVert w\rVert_{H^5}+\rVert \Psi\rVert_{H^6}\le_C \rVert w\rVert_{H^5}  + \rVert \partial_1\theta\rVert_{H^2}\le_C\rVert w\rVert_{H^5}  + \rVert \partial_1\Delta\theta\rVert_{L^2},
 \]
 where the second and the third inequality are due to \eqref{stream22v2} and Lemma~\ref{lemma_psi_stokes}, and the last inequality follows from Lemma~\ref{bilap}. With this, we rewrite the estimate \eqref{wanja2} as
\[
J_1 \le  -C\left(\rVert \partial_1\Delta\theta\rVert_{L^2}^2 +\rVert w\rVert_{H^5}^2 +\rVert u\rVert_{H^5} \right)  + \rVert u\rVert_{H^3}^2.
\]
 Also,  the Gagliardo-Nirenberg interpolation theorem tells us
\[
\rVert u\rVert_{H^3}^2\le_C \rVert u\rVert_{H^1}\rVert u\rVert_{H^5}\le_C \eta \rVert u\rVert_{H^5}^{2} + C_\eta\rVert u\rVert_{H^1}^2,\text{ for any $\eta>0$,}
\]
where the last inequality is due to Young's inequality. Thus by choosing $\eta$ sufficiently small, we can estimate $J_1$  as
\[
J_1 \le -C \left(\rVert \partial_1\Delta\theta\rVert_{L^2}^2 + \rVert w \rVert_{H^5}^2 + \rVert u\rVert_{H^5}\right) + C\rVert u\rVert_{H^1}^2.
\]
Hence plugging this into \eqref{rj2sdpposdsd}, we obtain the desired estimate.
\end{proof}

\subsection{Derivation of decay rates via  energy structure} In this subsection, the main goal is to derive sufficient decay rates in time for the terms $A_1$ and $A_2$ in \eqref{rakwk123bo}, under the assumption \eqref{size_assu}. We will achieve this as an application of the Lyapunov functional constructed in Section~\ref{Lyapunsec}. As a first step, we derive an explicit decay rate of the total energy and the enstrophy (in a time average sense).   
 
  As a preparation, let us  recall the following lemma, showing that the vertical rearrangement is indeed a minimizer of the potential energy:
  \begin{lemma}\cite[Proposition 2.5]{park2024stability}\label{propoos}
  Suppose $\rho_s$ satisfies \eqref{steadtysd2sd}. There exists $\delta=\delta(\gamma,\rVert \rho_s\rVert_{H^4})>0$ such that if $f={\rho}_s$ on $\partial\Omega$ and  $\rVert f-{\rho}_s\rVert_{H^3(\Omega)}\le \delta$, then
 \begin{align}\label{energy_nondegeneracy}
 C^{-1} \rVert f-f^*\rVert_{L^2(\Omega)}^2\le \int_{\Omega}(f(x)-f^*(x))x_2dx\le C\rVert f-f^*\rVert_{L^2(\Omega)}^2,
 \end{align}
 where $f^*$ is the vertical rearrangement of $f$, defined in \eqref{verticla_re}. 
 Moreover, we have
 \begin{align}\label{hatedogs}
 \rVert \partial_1f \rVert_{L^2(\Omega)}\ge C\rVert f-f^*\rVert_{L^2(\Omega)}.
 \end{align}
 The constant $C$ depends  only on $\gamma$ and $\rVert \rho_s\rVert_{H^4}$.  \end{lemma}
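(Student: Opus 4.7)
My plan is to reduce both inequalities to a one-dimensional computation via a level-set parameterization of $f$. First I would exploit the smallness assumption: since $\rVert f-\rho_s\rVert_{H^3}\le \delta$ and $H^3(\Omega)$ embeds continuously into $C^1(\overline{\Omega})$, choosing $\delta=\delta(\gamma,\rVert\rho_s\rVert_{H^4})$ small enough preserves $-\partial_2 f\ge \gamma/2$ pointwise on $\overline{\Omega}$. Together with the boundary condition $f=\rho_s$ on $\partial\Omega$, which pins $f(x_1,0)=\rho_s(0)$ and $f(x_1,1)=\rho_s(1)$ for every $x_1$, this makes $x_2\mapsto f(x_1,x_2)$ a smooth decreasing bijection from $[0,1]$ onto $[\rho_s(1),\rho_s(0)]$. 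Denote its inverse by $\eta(x_1,s)\in[0,1]$, characterised by $f(x_1,\eta(x_1,s))=s$, and set $\bar\eta(s):=\frac{1}{|\mathbb{T}|}\int_\mathbb{T}\eta(x_1,s)\,dx_1$. A matching of distribution functions together with the definition \eqref{verticla_re} forces $f^*(\bar\eta(s))=s$, i.e.\ $f^*$ is the inverse of $\bar\eta$; note that $f^*$ inherits the slope bound $|\partial_2 f^*|\in[\gamma/2,C]$.

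Next I would compute both sides of \eqref{energy_nondegeneracy} in terms of $\eta$. Assuming without loss of generality that $f\ge 0$ (by shifting both $f$ and $f^*$ by the same constant, which leaves both sides invariant), a direct layer-cake/Fubini calculation gives
\[
\int_\Omega f\,x_2\,dx=\int_0^\infty\!\!\int_\mathbb{T}\tfrac12\eta(x_1,s)^2\,dx_1\,ds,\qquad \int_\Omega f^* x_2\,dx=\tfrac{|\mathbb{T}|}{2}\int_0^\infty\bar\eta(s)^2\,ds,
\]
and the variance identity yields the clean formula $\int(f-f^*)x_2\,dx=\tfrac12\int\!\!\int_\mathbb{T}(\eta-\bar\eta)^2\,dx_1\,ds$. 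Independently, for each fixed $x_1$ the change of variables $x_2=\eta(x_1,s)$, $dx_2=|\partial_s\eta|\,ds$ with $|\partial_s\eta|=1/|\partial_2 f|\in[C^{-1},C]$, combined with the mean-value expansion $s-f^*(\eta(x_1,s))=f^*(\bar\eta(s))-f^*(\eta(x_1,s))=-\partial_2 f^*(\xi)(\bar\eta-\eta)$, delivers $\rVert f-f^*\rVert_{L^2(\Omega)}^2\sim\int\!\!\int_\mathbb{T}(\eta-\bar\eta)^2\,dx_1\,ds$ with constants depending only on $\gamma$ and $\rVert\rho_s\rVert_{H^4}$. Comparing the two formulas gives \eqref{energy_nondegeneracy}. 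For \eqref{hatedogs}, differentiating $f(x_1,\eta(x_1,s))=s$ in $x_1$ produces $\partial_1 f=-(\partial_2 f)\partial_1\eta$, so the same change of variables yields $\rVert\partial_1 f\rVert_{L^2}^2\sim\int\!\!\int_\mathbb{T}(\partial_1\eta)^2\,dx_1\,ds$. The periodic Poincar\'e inequality on $\mathbb{T}$, $\int_\mathbb{T}(\eta-\bar\eta)^2\,dx_1\le C\int_\mathbb{T}(\partial_1\eta)^2\,dx_1$, applied pointwise in $s$ and integrated, chains everything to $\rVert f-f^*\rVert_{L^2}^2\le C\rVert\partial_1 f\rVert_{L^2}^2$.

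The step I expect to require most care is the bookkeeping around the normalization of the rearrangement: the definition \eqref{verticla_re} uses the raw measure $|\{f>s\}|$ on a slab of $\mathbb{T}$-width $|\mathbb{T}|\ne 1$, so identifying $f^*$ with the inverse of $\bar\eta$ requires tracking constants consistently. Two secondary technicalities are (i) handling the layer-cake representation for signed $f$, which is circumvented by the constant-shift trick above, and (ii) the behaviour of $\eta$ at the extreme levels $s=\rho_s(0)$ and $s=\rho_s(1)$, where $\eta(\cdot,s)$ is forced to be identically $0$ or $1$ by the boundary condition, so $\eta-\bar\eta\equiv 0$ there and no endpoint contribution arises. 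Everything else is, in essence, a one-dimensional Poincar\'e inequality disguised by a change of variables.
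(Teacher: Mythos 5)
The paper does not contain its own proof of this lemma: it is imported verbatim from the author's companion work on the Stokes transport system, cited as Proposition~2.5 of \texttt{park2024stability}, so there is no in-paper argument to compare against. I can therefore only assess your proposal on its own terms.

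Your level-set argument is structurally sound and would serve as a valid proof. The chain of reductions checks out: (i) the $H^3\hookrightarrow C^1$ embedding plus the boundary pinning $f=\rho_s$ on $\partial\Omega$ makes $x_2\mapsto f(x_1,\cdot)$ a uniformly monotone bijection onto the fixed interval $[\rho_s(1),\rho_s(0)]$, so $\eta(x_1,s)$ exists with $|\partial_s\eta|=1/|\partial_2 f|\in[c(\|\rho_s\|_{H^4}),2/\gamma]$; (ii) matching distribution functions identifies $f^*$ with the inverse of $\bar\eta$, from which $|\partial_2 f^*|$ inherits two-sided bounds; (iii) the layer-cake computation together with the variance identity gives $\int(f-f^*)x_2\,dx=\tfrac12\iint(\eta-\bar\eta)^2$, while the change of variables $x_2=\eta(x_1,s)$ with the mean-value theorem gives $\|f-f^*\|_{L^2}^2\sim\iint(\eta-\bar\eta)^2$, so \eqref{energy_nondegeneracy} follows; and (iv) $\partial_1 f=-(\partial_2 f)\partial_1\eta$ plus the $\mathbb{T}$-Poincar\'e inequality $\int_\mathbb{T}(\eta-\bar\eta)^2\le C\int_\mathbb{T}(\partial_1\eta)^2$ gives \eqref{hatedogs}. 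The constants enter only through $\gamma$, the $C^1$ bound on $\rho_s$ (hence $\|\rho_s\|_{H^4}$), and the fixed Poincar\'e constant of $\mathbb{T}$, as required. Two cosmetic remarks: there is a sign typo in your mean-value expansion (it should read $f^*(\bar\eta)-f^*(\eta)=\partial_2 f^*(\xi)(\bar\eta-\eta)$, not with an extra minus), which is immaterial since only absolute values are used; and as you flag, the definition \eqref{verticla_re} is only consistent with $f^*$ being the inverse of $\bar\eta$ under the normalization $|\mathbb{T}|=1$, otherwise one must insert the factor $|\mathbb{T}|$ in the level-set measure. Neither affects the substance of the argument.
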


\begin{proposition}\label{|energyedeasd}
Let $(\rho(t),u(t))$ be a solution to the Boussinesq system \eqref{Bouss} for $t\in [0,T]$ for some $T>0$. Then there exists a small constant  $\delta_0=\delta_0(\rho_s)>0$ such that if \eqref{size_assu} holds, then
 \begin{align}
 E_T(t) +S(t)&\le C\frac{\delta}{t^2},\label{energy_decay_bou}\\
 \frac{2}{t}\int_{t/2}^t\rVert \nabla u(s)\rVert_{L^2}^2 +\rVert \nabla v\rVert_{L^2}^2 + \rVert \nabla w(s)\rVert_{L^2}^2ds& \le C\frac{\delta}{t^3}.\label{energy_decay_bou_2}
\end{align}
\end{proposition}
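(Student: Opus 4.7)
My plan is to convert the Lyapunov inequality of Proposition~\ref{lyaypiunove}, $\dot Y\le -c\mathcal D$ with $Y:=C_1E_T+S$ and $\mathcal D:=\rVert\nabla u\rVert_{L^2}^2+\rVert\nabla v\rVert_{L^2}^2+\rVert\nabla w\rVert_{L^2}^2$, into a Riccati-type inequality $\dot Y\le -cY^{3/2}/\sqrt\delta$, and integrate it. The time-averaged bound \eqref{energy_decay_bou_2} will then follow by re-integrating $\dot Y\le -c\mathcal D$ over $[t/2,t]$ and invoking \eqref{energy_decay_bou} at the time $t/2$.

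Each piece of $Y$ must be dominated by $\mathcal D$, up to factors of $\sqrt\delta$. The kinetic and perturbative pieces are immediate from Poincar\'e (using $u,w|_{\partial\Omega}=0$): $E_K\le C\rVert\nabla u\rVert_{L^2}^2$ and $S\le C\rVert\nabla w\rVert_{L^2}^2$. The delicate piece is $E_P$, which encodes the inviscid damping. For this I would apply Lemma~\ref{propoos} (valid because $\rVert\theta(t)\rVert_{H^3}\le\sqrt\delta$ by \eqref{size_assu} and $\theta|_{\partial\Omega}=0$ by Lemma~\ref{vanishing_boundary_boy}) to get $E_P\le C\rVert\rho-\rho_0^*\rVert_{L^2}^2\le C\rVert\partial_1\theta\rVert_{L^2}^2$. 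Since $\rho_s=\rho_s(x_2)$, Lemma~\ref{rigorousnavier} applied to \eqref{sBvec1} yields the stream-function representation $v=\nabla^\perp\Psi$ with $\Delta^2\Psi=\partial_1\theta$ and $\Psi=\nabla\Psi=0$ on $\partial\Omega$; testing this biharmonic equation against $\Psi$ and invoking Lemma~\ref{bilap} produces the norm equivalence $\rVert\nabla v\rVert_{L^2}\simeq\rVert\Delta\Psi\rVert_{L^2}\simeq\rVert\partial_1\theta\rVert_{H^{-2}}$. Interpolating $L^2$ between $H^{-2}$ and the higher-order Sobolev control on $\partial_1\theta$ available through \eqref{size_assu} should then produce a bound of the form $E_P^{3/2}\le C\sqrt\delta\rVert\nabla v\rVert_{L^2}^2$.

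Using the smallness $E_K,S\le\delta$ (hence $E_K^{3/2}\le\sqrt\delta E_K\le C\sqrt\delta\rVert\nabla u\rVert_{L^2}^2$ and analogously $S^{3/2}\le C\sqrt\delta\rVert\nabla w\rVert_{L^2}^2$), I sum the three contributions to obtain
\[
Y^{3/2}\le C\sqrt\delta\,\mathcal D \le -C'\sqrt\delta\,\dot Y,\qquad\text{hence}\qquad \dot Y\le -\frac{c\,Y^{3/2}}{\sqrt\delta}.
\]
Integrating this ODE (collected in Appendix~\ref{ode11sx}) gives $Y^{-1/2}(t)\ge ct/(2\sqrt\delta)$, whence $Y(t)\le C\delta/t^2$, which is \eqref{energy_decay_bou}. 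Finally, integrating the Lyapunov inequality over $[t/2,t]$,
\[
\int_{t/2}^t\mathcal D(s)\,ds \le \frac{Y(t/2)}{c}\le \frac{C\delta}{(t/2)^2}=\frac{4C\delta}{t^2},
\]
and dividing by $t/2$ yields \eqref{energy_decay_bou_2}.

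The main obstacle is the interpolation step producing $E_P^{3/2}\le C\sqrt\delta\rVert\nabla v\rVert_{L^2}^2$, equivalently $\rVert\partial_1\theta\rVert_{L^2}^2\le C\delta^{1/3}\rVert\partial_1\theta\rVert_{H^{-2}}^{4/3}$: the naive $H^{-2}$-vs-$H^3$ interpolation using only the pointwise $H^4$-control on $\theta$ gives the weaker $\rVert\partial_1\theta\rVert_{H^{-2}}^{6/5}$, which closes the ODE only at exponent $5/3$ and yields merely $1/t^{3/2}$ decay. Reaching the critical exponent $4/3$ (and thus the claimed $1/t^2$ rate) must exploit the integrated higher-order bounds in \eqref{size_assu} (on $\rVert u\rVert_{H^5}$, $\rVert w\rVert_{H^5}$, and $\rVert\partial_1\Delta\theta\rVert_{L^2}$), likely via a bootstrap combining pointwise and time-averaged regularity.
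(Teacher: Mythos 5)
You correctly diagnose the gap in your own plan, and it is a real one: the uniform Riccati $\dot Y\le -cY^{3/2}/\sqrt\delta$ cannot be produced, and it is not repairable by sharper interpolation. Even granting the integrated bounds in \eqref{size_assu}, the \emph{pointwise} regularity of $\partial_1\theta$ never exceeds $H^3$, so interpolating $L^2$ between $H^{-2}$ and $H^3$ yields at best $E_P^{5/3}\lesssim\delta^{2/3}\rVert\nabla v\rVert_{L^2}^2$ and hence only $t^{-3/2}$ decay, exactly as you observe. The integrated control on $\rVert\partial_1\Delta\theta\rVert_{L^2}$ does not upgrade any pointwise Sobolev index, so the ``bootstrap combining pointwise and time-averaged regularity'' you hope for cannot push the exponent to $4/3$.

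The paper abandons the uniform Riccati entirely; the decisive idea lives in the ODE lemma, not in the interpolation. After Poincar\'e, the pieces $g:=CE_K+S$ are already dissipated \emph{at power one}: $\rVert\nabla u\rVert_{L^2}^2+\rVert\nabla w\rVert_{L^2}^2\gtrsim\rVert u\rVert_{L^2}^2+\rVert w\rVert_{L^2}^2\simeq E_K+S$, so there is no reason to raise them to the power $3/2$ and incur a $\sqrt\delta$ loss. Only the potential piece $f:=CE_P$ is treated quadratically, via the same Gagliardo--Nirenberg step you use, but written as $\rVert\nabla v\rVert_{L^2}^2\gtrsim f^2/\alpha(t)$ with the \emph{time-dependent} coefficient $\alpha(t):=\rVert\Delta^2\Psi(t)\rVert_{H^2}^2\simeq\rVert\partial_1\Delta\theta(t)\rVert_{L^2}^2$ left intact rather than absorbed into a $\sqrt\delta$. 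The mixed differential inequality $\frac{d}{dt}(f+g)\lesssim -\left(f^2\alpha(t)^{-1}+g\right)$ is then closed by the tailored Lemma~\ref{odelem}, which produces $f(T)+g(T)\le C\bigl(\int_0^T\alpha(s)\,ds+f(0)+g(0)\bigr)/T^2$; the integrated bound $\int_0^T\rVert\partial_1\Delta\theta\rVert_{L^2}^2\,ds\le\delta$ from \eqref{size_assu} is then used directly in this conclusion rather than through interpolation. Lemma~\ref{odelem} itself is proved by splitting $[0,T]$ according to whether $g(t)\le\alpha(t)$ (on which the quadratic term dominates and gives algebraic decay) or $g(t)>\alpha(t)$ (on which the linear term dominates and gives exponential decay), and takes whichever set occupies more than half the interval. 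Your deduction of \eqref{energy_decay_bou_2} from \eqref{energy_decay_bou} by re-integrating the Lyapunov inequality over $[t/2,t]$ is the same as the paper's and is fine.
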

\begin{proof}

Assuming \eqref{size_assu} for sufficiently small $\delta_0\ll 1$, it is trivial that $\rVert \rho_0\rVert_{L^4}$ and $E_T(0)$ is bounded by a universal constant.  Therefore Proposition~\ref{lyaypiunove} tells us that
\begin{align}\label{riosxopsdw2}
\frac{d}{dt}\left(CE_T(t) + S(t) \right)\le -C\left(\rVert \nabla w\rVert_{L^2}^2+ \rVert \nabla v\rVert_{L^2}^2 + \rVert \nabla u\rVert_{L^2}^2\right)\le -C\left(\rVert u\rVert_{L^2}^2 + \rVert w\rVert_{L^2}^2 + \rVert \nabla v\rVert_{L^2}^2 \right),
\end{align}
where the last inequality follows from the Poincar\'e inequality.
 
 Now, let us estimate the lower bound of $\rVert \nabla v\rVert_{L^2}^2.$ From the stream function formulation in \eqref{streamv2}, we have
\begin{align}\label{lower_nab}
\rVert \nabla v\rVert_{L^2}^2 = \rVert \Phi\rVert_{H^2}^2.
\end{align}
Note that \eqref{streamv2} also implies
\begin{align}\label{misswanja12}
\rVert \partial_1\rho \rVert_{L^2} = \rVert \Delta^2 \Phi\rVert_{L^2}^2\le_C \rVert \Delta^2 \Phi\rVert_{H^2}^{1/2}\rVert \Delta \Phi\rVert_{L^2}^{1/2},
\end{align}
where the last inequality is due to the Gagliardo-Nirenberg interpolation inequality. Also, Lemma~\ref{propoos}, under the assumption \eqref{size_assu} for sufficiently small $\delta_0$, implies
\begin{align}\label{energy_compatibility}
C^{-1}\rVert \rho(t)-\rho_0^*\rVert_{L^2}^2 \le E_P(t)\le C\rVert \rho(t)-\rho^*\rVert_{L^2}^2\le C \rVert \partial_1 \rho\rVert_{L^2}^2.
\end{align}
Hence,  combining this estimate and \eqref{misswanja12}, we obtain
\[
\rVert \Phi\rVert_{H^2}^2 \ge \rVert \Delta \Phi\rVert_{L^2}^2 \ge_C \rVert \partial_1\rho\rVert_{L^2}^4 \rVert \Delta^2\Phi\rVert_{H^2}^{-2}\ge_C \rVert \rho-\rho^*\rVert_{L^2}^4\rVert \Delta^2\Phi\rVert_{H^2}^{-2}\ge_C  E_P(t)^2 \rVert \Delta^2\Phi\rVert_{H^2}^{-2}.
\]
Plugging this into \eqref{lower_nab}, we arrive at
\[
\rVert \nabla v\rVert_{L^2}^2 \ge_C E_P(t)^2 \rVert \Delta^2\Phi\rVert_{H^2}^{-2}.
\]
This implies that the energy estimate in \eqref{riosxopsdw2} can be written as 
\[
\frac{d}{dt}\left( CE_P(t) + CE_K(t) + S(t) \right)\le_C -\left((CE_P(t))^2\rVert \Delta^2\Phi(t)\rVert_{H^2}^{-2} + CE_K(t)+S(t) \right).
\]
Then applying Lemma~\ref{odelem} with $f(t):=CE_P(t)$, $g(t):= CE_K(t)+S(t)$ and $\alpha(t):= \rVert \Delta^2\Phi(t)\rVert_{H^2}^{2}$, we obtain
\begin{align}\label{morecarefu1}
 CE_P(t) + CE_K(t) + S(t) \le C\frac{1}{t^2}\left(\int_0^T \rVert \Delta^2 \Phi\rVert_{H^2}^2dt + \left(E_P(0)+E_K(0)+S(0)\right)\right)\text{ for all $t\in [0,T]$}.
\end{align}
On the other hand, \eqref{streamv2}, \eqref{size_assu} and Lemma~\ref{bilap} implies
\begin{align}\label{gogl3sd}
\int_0^T \rVert \Delta^2 \Phi\rVert_{H^2}^2dt\le_C \int_0^T \rVert \partial_1\theta\rVert_{H^2}^2 dt \le_C\delta.
\end{align}
From \eqref{energy_compatibility} and \eqref{lower_nab}, we have
\begin{align}\label{daysiof2sd}
E_P(t)+ \rVert \nabla v\rVert_{L^2}^2 \le_C \rVert \partial_1\rho\rVert_{L^2}^2 + \rVert \Phi\rVert_{H^2}^2 \le_C \rVert \partial_1\theta\rVert_{L^2}^2\le_C \delta,
\end{align}
where the second last inequality is due to \eqref{streamv2} and Lemma~\ref{lemma_psi_stokes} and the last inequality is due to \eqref{size_assu}. Moreover, we have
\[
E_K(t)+S(t)\le_C \rVert u(t)\rVert_{L^2}^2 + \rVert w(t)\rVert_{L^2}^2 \le_C \rVert u\rVert_{L^2}^2 + \rVert v\rVert_{L^2}^2\le_C \delta,
\]
were the last inequality is from \eqref{size_assu} and \eqref{daysiof2sd}. Hence, plugging this, \eqref{daysiof2sd} and \eqref{gogl3sd} into \eqref{morecarefu1}, we obtain  \eqref{energy_decay_bou}.
 
Lastly,  integrating \eqref{riosxopsdw2} in time over $[t/2,t]$, we get
\[
CE_T(t)+S(t) +C\int_{t/2}^t \rVert \nabla u\rVert_{L^2} + \rVert \nabla v\rVert_{L^2}^2 +\rVert \nabla w\rVert_{L^2}^2 ds \le C E_T(t/2) + S(t/2)\le \frac{C\delta}{t^2},
\]
where the last inequality is due to \eqref{energy_decay_bou}. Since $E_T(t), S(t)\ge0$, 
this gives us \eqref{energy_decay_bou_2}.
\end{proof}

In view of Proposition~\ref{bouenergy_es}, the decay rates obtained in the above proposition is not sufficient, since they cannot control high Sobolev norms such as  $\rVert u_2\rVert_{H^3},  \rVert u_2\rVert_{W^{2,\infty}}, \rVert \Delta^2w \rVert_{L^2}$ and $\rVert w_{tt}\rVert_{L^2}$.  Sufficient decay rates for these high norms will be achieved in the next proposition.

\begin{proposition}\label{higherdrhappy}
Let $(\rho(t),u(t))$ be a solution to the Boussinesq system \eqref{Bouss}. Then there exists a small constant  $\delta_0=\delta_0(\rho_s)>0$ such that if \eqref{size_assu} holds for $\delta<\delta_0$, then
\begin{align*}
\frac{2}{t}\int_{t/2}^t \rVert \partial_1\theta(s)\rVert_{L^2}^2ds&\le_C \frac{\delta}{t^2},\\
\frac{2}{t}\int_{t/2}^t \rVert w_{tt}(s)\rVert_{L^2}^2+\rVert w(s)\rVert_{H^4}^2 + \rVert u_2(s)\rVert_{H^2}^2ds &\le_C \frac{\delta}{t^3},
\end{align*}
 \text{ for all $t\in [0,T]$.}
\end{proposition}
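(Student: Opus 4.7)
The plan is to prove the two estimates separately: the first via a Gagliardo--Nirenberg interpolation on the stream function, and the second via a multi-stage bootstrap using successive energy estimates for $w$, $w_t$, $w_{tt}$ combined with mean-value arguments on $[t/4, t/2]$.

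\textbf{First estimate.} I will work with the stream function $\Psi$ from \eqref{stream22v2}, which satisfies $\Delta^2\Psi = \partial_1\theta$ with $\Psi = \nabla\Psi = 0$ on $\partial\Omega$ and $v = \nabla^\perp\Psi$. Lemma~\ref{vanishing_boundary_boy} ensures that $\partial_1\theta$ inherits the boundary-vanishing structure needed to apply Lemma~\ref{bilap}, giving $\rVert \partial_1\theta\rVert_{H^2} \le C\rVert \partial_1\Delta\theta\rVert_{L^2}$; Lemma~\ref{lemma_psi_stokes} then yields $\rVert \Psi\rVert_{H^6} \le C\rVert \partial_1\Delta\theta\rVert_{L^2}$. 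The Gagliardo--Nirenberg interpolation $\rVert \Psi\rVert_{H^4}^2 \le C\rVert \Psi\rVert_{H^2}\rVert \Psi\rVert_{H^6}$, combined with the equivalence $\rVert \Psi\rVert_{H^2} \simeq \rVert \nabla v\rVert_{L^2}$, produces the pointwise-in-time inequality
\[
\rVert \partial_1\theta\rVert_{L^2}^2 \le \rVert \Psi\rVert_{H^4}^2 \le C\rVert \nabla v\rVert_{L^2}\rVert \partial_1\Delta\theta\rVert_{L^2}.
\]
Integrating over $[t/2, t]$ and applying Cauchy--Schwarz with $\int_{t/2}^t\rVert \nabla v\rVert_{L^2}^2 \le C\delta/t^2$ from Proposition~\ref{|energyedeasd} and $\int_0^t \rVert \partial_1\Delta\theta\rVert_{L^2}^2 \le \delta$ from \eqref{size_assu} gives $\int_{t/2}^t \rVert \partial_1\theta\rVert_{L^2}^2 \le C\delta/t$, which is the first estimate.

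\textbf{Second estimate.} Testing the evolution \eqref{evol_w} against $w_t$, and \eqref{wtt_1} successively against $w_t$ and $w_{tt}$, using incompressibility and the vanishing boundary data, yields the three differential inequalities
\[
\rVert w_t\rVert_{L^2}^2 + \tfrac{d}{dt}\rVert \nabla w\rVert_{L^2}^2 \le \rVert F\rVert_{L^2}^2, \qquad \tfrac{d}{dt}\rVert w_t\rVert_{L^2}^2 + \rVert \nabla w_t\rVert_{L^2}^2 \le C\rVert F_t\rVert_{H^{-1}}^2,
\]
\[
\rVert w_{tt}\rVert_{L^2}^2 + \tfrac{d}{dt}\rVert \nabla w_t\rVert_{L^2}^2 \le \rVert F_t\rVert_{L^2}^2.
\]
Proposition~\ref{|energyedeasd} applied to the window $[t/4, t/2]$ gives $\int_{t/4}^{t/2}\rVert \nabla w\rVert^2 \le C\delta/t^2$, so the mean-value theorem produces $s_1 \in [t/4, t/2]$ with $\rVert \nabla w(s_1)\rVert^2 \le C\delta/t^3$. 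Integrating the first inequality from $s_1$ to $t$ and using $\rVert F\rVert_{L^2} \le C\rVert u\rVert_{H^1}$ from \eqref{BouFestimate11} together with $\int \rVert u\rVert_{H^1}^2 \le C\delta/t^2$ yields $\int_{t/2}^t \rVert w_t\rVert^2 \le C\delta/t^2$. Iterating: a mean-value choice produces $s_2 \in [t/4, t/2]$ with $\rVert w_t(s_2)\rVert^2 \le C\delta/t^3$, and the second inequality gives $\int_{t/2}^t \rVert \nabla w_t\rVert^2 \le C\delta/t^2$; a third mean-value choice then yields $\int_{t/2}^t \rVert w_{tt}\rVert^2 \le C\delta/t^2$. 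The nonlinear self-referential $\rVert u\rVert_{H^3}^2 \rVert w_{tt}\rVert^2$ terms entering $\rVert F_t\rVert_{L^2}^2$ via \eqref{BouFestimate3} are absorbed into the left-hand sides using $\rVert u\rVert_{H^3}^2 \le \delta \ll 1$. Finally, Lemma~\ref{leray_lem1}~\eqref{leaving2} and Lemma~\ref{endless_estimates1} deliver $\int_{t/2}^t \rVert w\rVert_{H^4}^2 \le C\delta/t^2$. For the $\rVert u_2\rVert_{H^2}^2$ term, the decomposition $u_2 = v_2 + w_2 = \partial_1\Psi + w_2$ reduces the task to: (i) the $w_2$-contribution controlled by $\rVert w\rVert_{H^2}^2 \le C(\rVert w_t\rVert^2 + \rVert u\rVert_{H^1}^2)$ from \eqref{rjksdpwowed2cumba}; (ii) the $v_2$-contribution bounded via Gagliardo--Nirenberg $\rVert \partial_1\Psi\rVert_{H^2}^2 \le C\rVert \Psi\rVert_{L^2}\rVert \Psi\rVert_{H^4} \le C\rVert \nabla v\rVert_{L^2}\rVert \partial_1\theta\rVert_{L^2}$, followed by Cauchy--Schwarz using Proposition~\ref{|energyedeasd} and the first estimate.

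\textbf{Main obstacle.} The principal challenge is the coordination of the three-stage mean-value bootstrap: each stage's output integral bound must be sharp enough to yield, through the mean-value theorem on $[t/4, t/2]$, a pointwise bound of order $\delta/t^3$ at the next stage's starting time, and the nonlinear $\rVert w_{tt}\rVert^2$ contributions to $\rVert F_t\rVert_{L^2}^2$ must be absorbed cleanly using the smallness $\delta \ll 1$ without degrading the $1/t^2$ integrated scaling. A secondary subtlety is that the first estimate serves a dual role---both as the headline conclusion and as an essential ingredient for the $\rVert u_2\rVert_{H^2}^2$ bound---so the two parts of the proposition must be proved in the correct order, with the interpolation machinery for $\rVert v_2\rVert_{H^2}$ calibrated to combine the decay of $\rVert \nabla v\rVert_{L^2}^2$ (from Proposition~\ref{|energyedeasd}) with the newly-established decay of $\rVert \partial_1\theta\rVert_{L^2}^2$ in the sharpest possible way.
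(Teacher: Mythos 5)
Your first estimate for $\rVert\partial_1\theta\rVert_{L^2}^2$ is correct and matches the paper's argument (the identical Gagliardo--Nirenberg interpolation plus Cauchy--Schwarz), and your mean-value bootstrap for $\rVert w_{tt}\rVert_{L^2}^2$ and $\rVert w\rVert_{H^4}^2$ is essentially the paper's Lemma~\ref{odeep21}/\ref{odee2p21} argument unwound by hand. One caveat there: when you say the $\rVert u\rVert_{H^3}^2\rVert w_{tt}\rVert_{L^2}^2$ contribution to $\rVert F_t\rVert_{L^2}^2$ is ``absorbed into the left-hand sides,'' this only works in your third inequality, where $\rVert w_{tt}\rVert_{L^2}^2$ genuinely appears as dissipation. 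Your second inequality has only $\rVert\nabla w_t\rVert_{L^2}^2$ as dissipation, which does not control $\rVert w_{tt}\rVert_{L^2}^2$ pointwise, so a clean linear order $1\to2\to3$ becomes circular. The paper sidesteps this by producing a single differential inequality \eqref{rjsowhjxxsd23sd} that carries both $-\rVert\nabla w_t\rVert_{L^2}^2$ and $-\rVert w_{tt}\rVert_{L^2}^2$ as simultaneous dissipation terms (via the identity $w_{tt}=-\mathbb{P}F_t+\mathbb{P}\Delta w_t$ applied after the energy estimate on $\rVert\nabla w_t\rVert_{L^2}^2$). You would need to either reproduce that combined inequality or add inequalities~2 and~3 before integrating.

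The genuine gap is the $\rVert u_2\rVert_{H^2}^2$ bound. Your interpolation route gives, at best,
\[
\rVert v_2\rVert_{H^2}^2\le_C \rVert\nabla v\rVert_{L^2}\rVert\partial_1\theta\rVert_{L^2},
\]
and after Cauchy--Schwarz over $[t/2,t]$, using $\int_{t/2}^t\rVert\nabla v\rVert_{L^2}^2\le_C \delta/t^2$ and your freshly proven $\int_{t/2}^t\rVert\partial_1\theta\rVert_{L^2}^2\le_C \delta/t$, one obtains only $\int_{t/2}^t\rVert v_2\rVert_{H^2}^2\le_C \delta/t^{3/2}$, i.e.\ $\frac{2}{t}\int_{t/2}^t\rVert u_2\rVert_{H^2}^2\le_C\delta/t^{5/2}$. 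This is strictly weaker than the stated $\delta/t^3$, and the gap is not cosmetic: in the proof of Theorem~\ref{main_Boussinesq} the rate $\delta/t^3$ is used to verify the hypothesis $n>1/\alpha=5/2$ of Lemma~\ref{fialmass} (with $f(s)=s^{1/4}\rVert u_2(s)\rVert_{H^2}^2$ and $\alpha=2/5$), and $n=9/4$ coming from $\delta/t^{5/2}$ fails that threshold. No purely interpolative argument can close this $t^{-1/2}$ deficit. The paper instead extracts $\rVert u_2\rVert_{H^2}^2$ as a \emph{dissipation} term: computing $\frac{1}{2}\frac{d}{dt}\rVert\partial_1\theta\rVert_{L^2}^2$, the linear contribution $I_3=\int\partial_{11}\theta\,\partial_2\rho_s u_2\,dx$ is integrated by parts through the biharmonic stream function $\Psi$ into $I_{31}=-\int(-\partial_2\rho_s)\Delta v_2\Delta u_2\,dx$, and replacing $v_2=u_2-w_2$ yields $I_{31}\le_C -\rVert u_2\rVert_{H^2}^2+\rVert w\rVert_{H^2}^2$ thanks to the sign condition $-\partial_2\rho_s\ge\gamma>0$ in \eqref{steadtysd2sd}. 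With $-\rVert u_2\rVert_{H^2}^2$ appearing as dissipation in a differential inequality whose solution is already known to decay like $\delta/t^2$ in time-average and whose forcing decays like $\delta/t^3$, Lemma~\ref{odeep21} gives the full rate $\delta/t^3$. That hidden coercivity from $\partial_2\rho_s<0$ is the ingredient your proposal misses.
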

\begin{proof}
\textbf{Estimate for $\rVert w_{tt}\rVert_{L^2}$.}
 Taking the Leray projection $\mathbb{P}$ in \eqref{evol_w} and differentiating it, we observe
\[
\nabla w_t = -\nabla \mathbb{P}  F + \nabla \mathbb{P}\Delta w.
\]
A standard energy estimate gives us
\[
\frac{d}{dt}\rVert \nabla w \rVert_{L^2}^2 = - \int \nabla \mathbb{P} F \nabla wdx + \int \nabla \mathbb{P}\Delta w \nabla w dx = \int F \mathbb{P} \Delta w dx - \int |\mathbb{P}\Delta w|^2 dx\le -C\rVert \mathbb{P}\Delta w \rVert_{L^2}^2 + C\rVert F\rVert_{L^2}^2,
\]
where the last inequality follows from Young's inequality. Using \eqref{BouFestimate11}, we arrive at
\[
\frac{d}{dt}\rVert \nabla w \rVert_{L^2}^2  \le -C \rVert \mathbb{P}\Delta w \rVert_{L^2}^2 + C\rVert \nabla u\rVert_{L^2}^2.
\]
Then, applying Lemma~\ref{odeep21} to the above differential inequality, noting the estimates for $\nabla u$ and $\nabla w$ in  \eqref{energy_decay_bou_2}, we obtain
\begin{align}\label{yesterday}
\frac{2}{t}\int_{t/2}^t\rVert \mathbb{P}\Delta w \rVert_{L^2}^2 ds \le_C \frac{\delta}{t^3}.
\end{align}
Again, applying the Leray projection to \eqref{evol_w}, we  have $
w_t = -\mathbb{P}F + \mathbb{P}\Delta w,$
hence
\[
\rVert w_t \rVert_{L^2}\le_C \rVert F\rVert_{L^2} + \rVert \mathbb{P}\Delta w\rVert_{L^2}.
\]
Applying Lemma~\ref{Stokes_wellposed} to \eqref{evol_w}, we also have
\[
\rVert w\rVert_{H^2}\le \rVert w_t\rVert_{L^2}+ \rVert F\rVert_{L^2}\le_C \rVert F\rVert_{L^2} + \rVert \mathbb{P}\Delta w\rVert_{L^2}\le_C \rVert \nabla u\rVert_{L^2} + \rVert \mathbb{P}\Delta w\rVert_{L^2},
\]
where the last inequality follows from \eqref{BouFestimate11}.
Hence, using \eqref{yesterday} and \eqref{energy_decay_bou_2}, we arrive at
\begin{align}\label{thats}
\frac{2}{t}\int_{t/2}^t\rVert  w \rVert_{H^2}^2 ds\le \frac{2}{t}\int_{t/2}^t \rVert \nabla u\rVert_{L^2}^2 + \rVert \mathbb{P}\Delta w\rVert_{L^2}^2 ds \le_C  \frac{\delta}{t^3}.
\end{align}
Towards the higher derivative estimate for $w$, we take the Leray projection $\mathbb{P}$ in the equation of $w_{tt}$ in \eqref{wtt_1}, yielding that 
\begin{align}\label{rjsdroisskwj2sds}
w_{tt}= -\mathbb{P}F_t+\mathbb{P}\Delta w_{t}.
\end{align}
A standard energy estimate gives us
\begin{align}\label{r22sdoss2sd}
\frac{d}{dt}\frac{1}{2}\rVert \nabla w_{t}\rVert_{L^2}^2 & = -\int \nabla\mathbb{P}(F_t) \nabla w_t dx +\int  \nabla w_t\nabla\mathbb{P} \Delta w_t dx = -\int F_t \mathbb{P}\Delta w_t dx - \int |\mathbb{P}\Delta w_t|^2 dx\nonumber\\
&\le C\rVert F_t\rVert_{L^2}^2 -\frac{1}{2}\rVert \mathbb{P}\Delta w_t\rVert_{L^2}^2\nonumber\\
&\le C\rVert F_t\rVert_{L^2}^2 -C\rVert \mathbb{P}\Delta w_t\rVert_{L^2}^2 - C\rVert w_t\rVert_{H^2}^2\nonumber\\
&\le C\rVert F_t\rVert_{L^2}^2 -C\rVert \mathbb{P}\Delta w_t\rVert_{L^2}^2 - C\rVert \nabla w_t\rVert_{L^2}^2,
\end{align}
where the second  inequality follows from \eqref{rkawk1ppoross}.
From the boundedness of the Leray projection (see Lemma~\ref{leray_pro}), we notice that \eqref{rjsdroisskwj2sds} implies 
\[
\rVert \mathbb{P}\Delta w_t\rVert_{L^2}\ge \rVert w_{tt}\rVert_{L^2} - \rVert F_t\rVert_{L^2}.
\]
Thus \eqref{r22sdoss2sd} yields
\begin{align}\label{getupwanja2}
\frac{d}{dt}\frac{1}{2}\rVert \nabla w_{t}\rVert_{L^2}^2  \le -C\rVert \nabla w_t\rVert_{L^2}^2 -C \rVert w_{tt}\rVert_{L^2}^2 + C\rVert F_t\rVert_{L^2}^2
\end{align}
Using \eqref{BouFestimate3} we see that 
\[
-C \rVert w_{tt}\rVert_{L^2} + C\rVert F_t\rVert_{L^2}^2\le -C(1-C\rVert u\rVert_{H^3})\rVert w_{tt}\rVert_{L^2} +C\left(\rVert w\rVert_{H^2}+\rVert u\rVert_{H^1}\right)\le - C \rVert w_{tt}\rVert_{L^2} + C\left(\rVert w\rVert_{H^2}+\rVert u\rVert_{H^1} \right),
\]
where the last inequality is due to \eqref{size_assu}. Plugging this into \eqref{getupwanja2}, we arrive at 
\begin{align}\label{rjsowhjxxsd23sd}
\frac{d}{dt}\frac{1}{2}\rVert \nabla w_{t}\rVert_{L^2}^2  \le -C\rVert \nabla w_t\rVert_{L^2}^2 - C \rVert w_{tt}\rVert_{L^2}^2 + C\left(\rVert w\rVert_{H^2}^2+\rVert u\rVert_{H^1}^2 \right).
\end{align}
We notice that the last term in the right-hand side satisfies $\frac{2}{t}\int_{t/2}^{t} \rVert w(s)\rVert_{H^2}^2+\rVert u(s)\rVert_{H^1}^2ds \le_C \frac{\delta}{t^3}$, which is due to \eqref{thats} and \eqref{energy_decay_bou_2}. Thus applying Lemma~\ref{odee2p21} to \eqref{rjsowhjxxsd23sd}, we obtain
 \begin{align}\label{nalsd2}
 \frac{2}t\int_{t/2}^t \rVert \nabla w_t \rVert_{L^2}^2 ds \le \frac{C}{t^3} \left(\rVert \nabla w_t(0)\rVert_{L^2}^2 + \int_0^T \rVert w(s)\rVert_{H^2}^2+\rVert u(s)\rVert_{H^1}^2 ds\right).
 \end{align}
Applying Lemma~\ref{Stokes_wellposed} to \eqref{wtt_1} and \eqref{wtt_2},  we see that
\begin{align}\label{rkjsdj2xc211sd}
\rVert \nabla w_t\rVert_{L^2} \le_C \rVert w_t\rVert_{H^2}\le_C \rVert w_{tt}\rVert_{L^2} + \rVert F_t\rVert_{L^2},\quad \rVert w\rVert_{H^2}\le_C \rVert F\rVert_{L^2} + \rVert w_t\rVert_{L^2}.
\end{align}
Hence,  \eqref{BouFestimate3} tell us that
\begin{align*}
\rVert \nabla w_t(t)\rVert_{L^2} &\le_C (1+\rVert u\rVert_{H^3}) \rVert w_{tt}\rVert_{L^2} + \rVert w\rVert_{H^2} + \rVert u\rVert_{H^1}\\
&\le_C  (1+\rVert u\rVert_{H^3}) \rVert w_{tt}\rVert_{L^2}  + \rVert F\rVert_{L^2} + \rVert w_t\rVert_{L^2} +\rVert u\rVert_{H^1}\\
&\le_C  (1+\rVert u\rVert_{H^3}) \rVert w_{tt}\rVert_{L^2} + \rVert w_t\rVert_{L^2} + \rVert u\rVert_{H^1}\\
&\le_C \sqrt{\delta},
\end{align*}
where the second inequality is due to the second inequality in \eqref{rkjsdj2xc211sd}, and the third inequality follows from \eqref{BouFestimate11}, and the last inequality is due to \eqref{size_assu}. Together with \eqref{size_assu}, this implies that
\[
\rVert \nabla w_t(0)\rVert_{L^2}^2 + \int_0^T \rVert w(s)\rVert_{H^2}^2+\rVert u(s)\rVert_{H^1}^2 ds\le_C \delta.
\]
Thus, \eqref{nalsd2} reads as
\begin{align}\label{read1sds}
 \frac{2}t\int_{t/2}^t \rVert \nabla w_t \rVert_{L^2}^2 ds\le_C \frac{\delta}{t^3}.
\end{align}
Noting that  \eqref{thats} and \eqref{energy_decay_bou_2} imply
\begin{align}\label{wanjapleaselet23}
\frac{2}{t}\int_{t/2}^t \rVert w\rVert_{H^2}^2+\rVert u\rVert_{H^1}^2 ds \le_C \frac{\delta}{t^3},
\end{align}
we apply Lemma~\ref{odeep21} to \eqref{rjsowhjxxsd23sd} with $f:=\rVert \nabla w_t\rVert_{L^2}^2$, $g:=C\rVert w_{tt}\rVert_{L^2}^2$ and $h:=C \rVert w_{tt}\rVert_{L^2}^2 + C\left(\rVert w\rVert_{H^2}^2+\rVert u\rVert_{H^1}^2 \right)$, yielding that
\begin{align}\label{rkspw2por1}
\frac{2}{t}\int_{t/2}^t \rVert w_{tt}\rVert_{L^2}^2 ds \le_C \frac{\delta}{t^3}.
\end{align}

\textbf{Estimate for $\rVert w\rVert_{H^4}$.}
From \eqref{leaving2}, \eqref{BouFestimate1} and \eqref{BouFestimate3}, it follows that
\[
\rVert w\rVert_{H^4}\le_C \rVert w\rVert_{H^2} + \rVert u\rVert_{H^1} + (1+\rVert u\rVert_{H^3})\rVert w_{tt}\rVert_{L^2}\le_C \rVert w\rVert_{H^2} + \rVert u\rVert_{H^1} + \rVert w_{tt}\rVert_{L^2}, 
\]
where the last inequality is due to \eqref{size_assu}. Therefore,  \eqref{wanjapleaselet23} and \eqref{rkspw2por1} implies
\begin{align}\label{beawhsd}
\frac{2}{t}\int_{t/2}^t \rVert w\rVert_{H^4}^2 ds \le_C \frac{\delta}{t^3}.
\end{align}

\textbf{Estimate for $\rVert \partial_1\theta \rVert_{L^2}$.}Using the stream function in \eqref{streamv2} and the Gagliardo-Nirenberg interpolation theorem, we observe
 \[
  \rVert \partial_1 \theta\rVert_{L^2}\le \rVert \Delta^2\Phi\rVert_{L^2}\le \rVert \Delta^2\Phi\rVert_{H^2}^{1/2}\rVert \Delta \Phi\rVert_{L^2}^{1/2}\le \rVert \Delta^2\Phi\rVert_{H^2}^{1/2}\rVert \nabla v\rVert_{L^2}^{1/2}\le \rVert \partial_1\Delta\theta\rVert_{L^2}^{1/2}\rVert \nabla v\rVert_{L^2}^{1/2},
 \]
 where the last inequality follows from \eqref{streamv2} and Lemma~\ref{bilap}. Therefore, we have
 \begin{align}\label{almostisd1sd}
 \int_{t/2}^t \rVert \partial_1\theta \rVert_{L^2}^2 ds&\le_C \int_{t/2}^{t}\rVert \partial_1\Delta\theta\rVert_{L^2}\rVert \nabla v\rVert_{L^2}ds\le_C \left(\int_{t/2}^t \rVert \partial_1\Delta\theta\rVert_{L^2}^2 ds\right)^{1/2}\left( \int_{t/2}^t \rVert \nabla v\rVert_{L^2}^{2}ds\right)^{1/2}\nonumber\\
 &\le_C \delta^{1/2}\left( \int_{t/2}^t \rVert \nabla v\rVert_{L^2}^{2}ds\right)^{1/2}\nonumber\\
 &\le_C  \frac{\delta}{t},
 \end{align}
 where the second last inequality is due to \eqref{size_assu} and the last inequality is due to \eqref{energy_decay_bou_2}. Thus, we arrive at
 \begin{align}\label{erika1}
 \frac{2}{t}\int_{t/2}^t \rVert \partial_1\theta\rVert_{L^2}^2ds \le C\frac{\delta}{t^2}.
 \end{align}

\textbf{Estimate for $\rVert u_2\rVert_{H^2}$.}  We can compute, using \eqref{Boussiesq_pertur}, that
 \begin{align}\label{firjskdswsdcomingw1}
 \frac{1}{2}\frac{d}{dt}\rVert \partial_1\theta\rVert_{L^2}^2 = -\int \partial_{11}\theta \theta_t dx = \int \partial_{11}\theta (u_1\partial_1\theta + u_2\partial_2\theta + \partial_2\rho_su_2) dx.
 \end{align}
 Let us decompose the right-hand side as
 \[
\int \partial_{11}\theta u_1\partial_1\theta dx + \int \partial_{11}\theta u_2\partial_2\theta dx +\int \partial_{11}\theta \partial_2\rho_su_2 dx =:I_1+I_2+I_3.
 \]
We start with $I_3$. For $I_3$, we use the stream function $\Psi$ in \eqref{stream22v2} and compute
 \[
 I_3 = -\int \partial_{11}\theta (-\partial_2\rho_s) u_2 dx =- \int \partial_1\Delta^{2}\Psi (-\partial_2\rho_s)u_2 dx=- \int \partial_1\Delta \Psi \Delta(-\partial_2\rho_s u_2) dx,
 \]
 where the integration by parts in the thrid equality is justified by the boundary condition for $u=0$ on $\partial\Omega$ and the incompressible condition, which tell us 
 \begin{align}\label{weksd21}
 u_2 =0,\quad \partial_2u_2 = - \partial_1u_1=0, \text{ on $\partial\Omega$.}
 \end{align}
 Since $\rho_s$ is independent of the horizontal variable $x_1$, we have
 \[
 \Delta(\partial_2\rho_s u_2) =\partial_2\rho_s \Delta u_2 + \partial_{222}\rho_s u_2 + 2\partial_{22}\rho_s\partial_2u_2.
 \]
 Hence,
 \[
 I_3 = - \int \partial_1\Delta \Psi (-\partial_2\rho_s \Delta u_2) dx +  \int \partial_1\Delta \Psi (\partial_{222}\rho_s u_2 + 2\partial_{22}\rho_s\partial_2u_2) dx=: I_{31} + I_{32}.
 \]
 For $I_{31}$, we can further continue, using $w=u-v$ and $v=\nabla^\perp\Psi$, as
 \begin{align}\label{I21sd222}
 I_{31} &=-\int (-\partial_2\rho_s) \partial_1\Delta\Psi \Delta u_2 dx =- \int  (-\partial_2\rho_s) \Delta v_2 \Delta u_2 dx\nonumber\\
 & = -\int (-\partial_2\rho_s) (\Delta u_2)^2 dx - \int(-\partial_2\rho_s) \Delta w_2\Delta u_2dx \le_C -\rVert u_2\rVert_{H^2}^2 + \rVert w\rVert_{H^2}^2,
 \end{align}
 where the last inequality is due to Young's inequality and \eqref{steadtysd2sd}. For $I_{32}$, integrating by parts in $x_1$, we get
 \[
 |I_{32}| = \left|\int \Delta\Psi \left(\partial_{222}\rho_s \partial_1u_2 + 2\partial_{22}\rho_s \partial_{12}u_2 \right)dx\right|\le_C C_\eta\rVert \nabla v\rVert_{L^2}^2 + \eta\rVert \partial_{222}\rho_s\rVert_{L^\infty}^2\rVert u_2\rVert_{H^2}^2, 
 \]
 which holds for any $\eta>0$ due to Young's inequality. Hence, choosing $\eta$ sufficiently small depending on $\rVert\partial_2\rho\rVert_{H^4}$, as assumed in \eqref{steadtysd2sd} and adding the last estimate to \eqref{I21sd222}, we obtain
 \begin{align}\label{I21sd}
 I_3\le_C -\rVert u_2\rVert_{H^2}^2 + \rVert w\rVert_{H^2}^2 + \rVert \nabla v\rVert_{L^2}^2.
 \end{align}
 Now, we move on to $I_2$. Using 
 \[
 \partial_{11}\theta=\partial_1\Delta^2\Psi=\Delta^2v_2,
 \]
 we see that
 \begin{align*}
 I_2 &= \int \Delta^2v_2 u_2 \partial_2\theta dx = \int \Delta^2 u_2 u_2 \partial_2\theta- \int \Delta^2 w_2 u_2\partial_2 \theta dx\\
 & = \int \Delta u_2 \Delta (u_2 \partial_2\theta )dx -\int \Delta^2 w_2 u_2\partial_2\theta dx,
 \end{align*}
 where we performed an integration by parts using \eqref{weksd21}.
 The first integral can be estimated as
 \begin{align*}
 \left|  \int \Delta u_2 \Delta (u_2 \partial_2\theta)dx \right|&\le \rVert \Delta u_2\rVert_{L^2}\rVert \Delta(u_2\partial_2\theta)\rVert_{L^2}\\
 &\le \rVert \Delta u_2\rVert_{L^2}\left( \rVert u_2\rVert_{H^2} \rVert \partial_2 \theta \rVert_{L^\infty}+ \rVert u_2\rVert_{L^\infty}\rVert\partial_2\theta\rVert_{H^2})\right)\\
 &\le_C \rVert u_2\rVert_{H^2}^2 \rVert \partial_2\theta\rVert_{H^2}\\
 &\le_C\rVert u_2\rVert_{H^2}^2\sqrt{\delta_0},
 \end{align*}
 where the last inequality follows from \eqref{size_assu}. For the second integral, Young's inequality gives us
 \[
 \left| \int \Delta^2 w_2 u_2\partial_2\theta dx \right|\le_C \rVert w\rVert_{H^4}^2 +\rVert u_2\rVert_{L^2}^2\rVert \partial_2\theta\rVert_{L^\infty}\le_C  \rVert w\rVert_{H^4}^2 +\rVert u_2\rVert_{L^2}^2\sqrt{\delta_0}
 \]
 Therefore, we get
 \begin{align}\label{sluf2ds21}
 |I_2|\le_C \sqrt{\delta_0}\rVert u_2\rVert_{H^2}^2 + \rVert w\rVert_{H^4}^2.
 \end{align}
 For $I_1$, we compute
 \[
 I_1 =\int \partial_{11}\theta u_1\partial_1\theta dx= -\frac{1}{2}\int (\partial_1\theta)^2 \partial_1u_1 dx = \int (\partial_1\theta)^2 \partial_2 u_2dx,
 \]
 where we used the incompressibility of $u$. Hence, using the Gagliardo-Nirenberg inequality, we get
 \begin{align*}
 |I_1|&\le \rVert \partial_2u_2\rVert_{L^2}\rVert \partial_1\theta\rVert_{L^4}^2 = \rVert \partial_2u_2\rVert_{L^2}\rVert  \Delta^2 \Psi\rVert_{L^4}^2\le \rVert \partial_2u_2\rVert_{L^2} \left(\rVert \Delta \Psi\rVert_{L^2}^{1/2}\rVert \Delta^3 \Psi\rVert_{L^2}^{1/2} \right)^2\\
 &\le\rVert\partial_2u_2\rVert_{L^2} \left( \rVert \nabla v\rVert_{L^2}^{1/2}\rVert \theta\rVert_{H^4}^{1/2}\right)^2\\
 &\le \delta_0 \rVert \partial_2 u_2\rVert_{L^2}^2 + \rVert \nabla v\rVert_{L^2}^2
 \end{align*}
 where the second last inequality is due to $\rVert\Delta^3\Psi\rVert_{L^2} =\rVert \partial_1\Delta \theta\rVert_{L^2}$, which follows from \eqref{streamv2}, and the last inequality is due to \eqref{size_assu} and Young's inequality.
  Thus, Plugging this, \eqref{sluf2ds21} and \eqref{I21sd} into \eqref{firjskdswsdcomingw1}, we arrive at
 \[
 \frac{d}{dt}\rVert \partial_1\theta \rVert_{L^2}^2 \le -\rVert u_2\rVert_{H^2}^2 +\left( \rVert w\rVert_{H^4}^2 +\rVert \nabla v\rVert_{L^2}^2\right).
 \]
 Then, applying Lemma~\ref{odeep21}, with \eqref{erika1}, \eqref{beawhsd}, \eqref{energy_decay_bou_2}, we obtain 
 \[
 \frac{2}{t}\int_{t/2}^t \rVert u_2\rVert_{H^2}^2 ds \le_C \frac{\delta}{t^3}.
 \]
 With \eqref{rkspw2por1}, \eqref{erika1} and \eqref{beawhsd}, we finish the proof.
  \end{proof}
\subsection{Proof of Theorem~\ref{main_Boussinesq}}
Let $\theta_0\in H^{2}_0\cap H^4$ and $u_0\in H^4$ satisfy 
\begin{align}\label{initial_s2}
\rVert \theta_0\rVert_{H^4}+\rVert u_0\rVert_{H^4}\le \epsilon,
\end{align}
 for sufficiently small $\epsilon\ll \sqrt{\delta_0}$, where $\delta_0$ is a constant such that under the assumption \eqref{size_assu}, all the lemmas and the propositions are satisfied. Note that   \eqref{initial_s2} ensures that
 \begin{align}\label{rkjstotlasde}
 E_T(0) = E_P(0) + \rVert u_0\rVert_{L^2}^2 \le_C \rVert \partial_1\theta_0\rVert_{L^2}^2 +\rVert u_0\rVert_{L^2}^2\le_C \epsilon^2,
 \end{align}
 where the second inequality is due to Lemma~\ref{propoos}.
 Also, we claim that
\begin{align}\label{latelast}
\rVert w_t(0)\rVert_{L^2}+\rVert w_{tt}(0)\rVert_{L^2}\le_C \epsilon. 
\end{align}
Indeed, applying Lemma~\ref{leray_pro} to \eqref{evol_w}, we have
  \begin{align}\label{wanjalo}
  \rVert w_t(0)\rVert_{L^2}&\le_C \rVert F(0)\rVert_{L^2} + \rVert \Delta w(0)\rVert_{L^2}\le_C \rVert u_0\rVert_{H^1}+ \rVert w(0)\rVert_{H^2}\nonumber\\
  &\le_C \rVert v(0)\rVert_{H^2}+\rVert u_0\rVert_{H^2}\le_C \rVert \theta_0\rVert_{L^2}+\rVert u_0\rVert_{H^2}\nonumber\\
  &\le_C \epsilon,
  \end{align}
  where the second inequality follows from \eqref{BouFestimate11}.
  Using \eqref{wtt_1} and \eqref{evol_w}, we  also have
  \[
 w_{tt}= -\nabla p_t -F_t + \Delta w_{t} = -\nabla (p_t + \Delta p) - F_t - F + \Delta^2 w. 
  \]
  Thus, applying Lemma~\ref{leray_pro}, we have
  \begin{align*}
  \rVert w_{tt}(0)\rVert_{L^2}&\le_C \rVert F_t(0)\rVert_{L^2}+\rVert F(0)\rVert_{L^2}+ \rVert \Delta^2 w(0)\rVert_{L^2}\\
  &\le_C \rVert u_0\rVert_{H^1}+\rVert w(0)\rVert_{H^2}+\rVert u_0\rVert_{H^3}\rVert w_{tt}(0)\rVert_{L^2}\\
  &\le_C \epsilon +\epsilon\rVert w_{tt}(0)\rVert_{L^2},
  \end{align*}
  where the second inequality follows from \eqref{BouFestimate11} and \eqref{BouFestimate3}. Combining this with \eqref{wanjalo}, we verify the claim \eqref{latelast}.

 We consider the solutions $\theta(t),u(t), w(t)$ to \eqref{Boussiesq_pertur} and \eqref{evol_w}. We claim that 
  \begin{align}\label{proofthensd1}
  \rVert \theta(t)\rVert_{H^4}^2 + \rVert w_t(t)\rVert_{L^2}^2+\rVert w_{tt}(t)\rVert_{L^2}^2 + \int_{0}^t \rVert \partial_1\Delta\theta\rVert_{L^2}^2 + \rVert u\rVert_{H^5}^2+\rVert w\rVert_{H^5}^2dt \le_C \epsilon^2,\text{ for all $t>0$.}
 \end{align}
Once this claim is proved, then together with Lemma~\ref{smallenough} and \eqref{rkjstotlasde}, we see that the assumption~\eqref{size_assu} is satisfied for all time. Especially, we have $\rVert \theta(t)\rVert_{H^4}+\rVert u(t)\rVert_{H^4}\le_C \epsilon$ for all $t>0$, proving the first stability estimate in \eqref{mnum12}. In addition,  we can apply Proposition~\ref{|energyedeasd}, especially \eqref{energy_decay_bou} implies
\[
E_P(t)+E_K(t)\le_C \frac{\epsilon^2}{t^2}.
\]
Thanks to Lemma~\ref{propoos}, we have $\rVert \rho(t)-\rho^*\rVert_{L^2}^2\le_C E_P(t)$. Combining this with the definition of the kinetic energy, we obtain
\[
\rVert \rho(t)-\rho^*\rVert_{L^2}^2 + \rVert u(t)\rVert_{L^2}^2\le_C \frac{\epsilon^2}{t^2},
\]
which proves the second asymptotic stability estimate in \eqref{mnum122}, finishing the proof of the theorem.  In the rest, we will aim to prove \eqref{proofthensd1}.

  To simplify the notations, let us denote
  \[
  f(t):=\rVert \theta(t)\rVert_{H^4}^2 + \rVert w_t(t)\rVert_{L^2}^2+\rVert w_{tt}(t)\rVert_{L^2}^2, \quad g(t):=\rVert \partial_1\Delta\theta(t)\rVert_{L^2}^2 + \rVert u(t)\rVert_{H^5}^2+\rVert w(t)\rVert_{H^5}^2.
  \]
   Towards the proof of \eqref{proofthensd1}, we suppose to the contrary that there exists $T^*>0$ such that
   \begin{align}\label{contransd2sd}
    f(t) + \int_0^{t} g(s)ds \le M\epsilon^2,\text{ for all $t\in [0,T^*]$ and } f(T^*) + \int_0^{T^*} g(s)ds = M\epsilon^2,
   \end{align}
   for some $M\gg 1$ which will be determined in terms of the implicit constant $C>0$, which depends on only $\rho_s$, but not $\epsilon$. Assuming $\epsilon$ is sufficiently small so that $M\epsilon^2\ll \delta_0=\delta_0(\rho_s)$, which guararentees that \eqref{size_assu} is still valid for all $t\in [0,T^*]$.  
   
    We see from Lemma~\ref{smallenough} and \eqref{rkjstotlasde} that
    \begin{align}\label{rememberwanja}
    \rVert u(t)\rVert_{H^4}+\rVert \theta(t)\rVert_{H^4}+\rVert w\rVert_{H^4}\le_C f(t)+E_T(0)\le_C f(t)+ \epsilon^2,\text{ for all $t\in [0,T^*]$.}
    \end{align}
   Recall  $A_1,A_2$ from \eqref{A123def}:
   \begin{align}
   A_1&= \left( \rVert u_2\rVert_{W^{2,\infty}} + \rVert u_2\rVert_{H^3} +  \rVert \Delta^2w \rVert_{L^2}\right)\rVert \Delta^2\theta\rVert_{L^2}\label{A123def3}\\
A_2&=\rVert \partial_1\theta \rVert_{L^2}^2 +\rVert u\rVert_{H^1}^2 +\rVert w_{tt}\rVert_{L^2}^2+ \rVert w\rVert_{H^2}^2.\label{A123def34}
   \end{align}
    The usual Sobolev embedding theorems give us that
    \[
    A_1+A_2\le_C   \rVert u\rVert_{H^4}^2+\rVert \theta\rVert_{H^4}^2+\rVert w\rVert_{H^4}^2 + \rVert w_{tt}\rVert_{L^2}^2\le_C f(t)+ \epsilon^2, \text{ for $t\in [0,T^*]$},
    \]
    where the last inequality is due to \eqref{rememberwanja}.
  Hence, Proposition~\ref{bouenergy_es} implies that 
  \begin{align}\label{differential_ff}
  \frac{d}{dt}f(t)\le -C g(t) + C(A_1+A_2) \le -C g(t) + Cf(t)+C\epsilon^2,\text{ for $t\in [0,T^*]$.}
  \end{align}
 Since $f(0)\le_C\epsilon^2$, which follows from \eqref{initial_s2} and \eqref{latelast},  this differential inequality gives us 
  \begin{align}\label{difpsdiw}
  f(t)\le C\left(\epsilon^2 e^{Ct}+\epsilon^2t\right)\le C\epsilon^2e^{Ct} \text{ for $t\in [0,T^*]$},
  \end{align}
  Therefore,  $ f(T^*) \le_C \epsilon^2 e^{CT^*}$. Integrating the above differential inequality in time, we have
  \begin{align}\label{hhsdkiwisdsd}
  f(T^*) + \int_0^{T^*}g(s)ds\le_C f(0)+\int_0^{T^*}f(t)dt + \epsilon^2 T^*\le_C \epsilon^2\left( 1+ e^{CT^*} +T^*\right)\le\epsilon^2 e^{CT^*}.
  \end{align}
  Thus, \eqref{contransd2sd}  must imply $
  M\epsilon^2 \le_C  \epsilon^2 e^{CT^*}.$
 This inequality tells us that $M\le Ce^{CT^*}$, which gives a lower bound of $T^*$:
  \begin{align}\label{tjjsdowjsdwd}
  T^*\ge C\log M\gg 1.
  \end{align}
  Let us pick 
\begin{align}\label{rkksd21sd}
T^*_M:=\log\log M\gg 1.
\end{align}
From \eqref{tjjsdowjsdwd},  we can assume  $M$ is sufficiently large so that  $T^*_M\ll T^*$. Integrating \eqref{differential_ff} using \eqref{difpsdiw}, we get
\begin{align}\label{nsjdjwdwsad}
f(T^*_M)+\int_0^{T^*_M}g(s)ds\le C\epsilon^2 (\log M)^C + f(0)\le_C \epsilon^2 (\log M)^C.
\end{align}

  Now, we analyze the terms $A_1,A_2$ more carefully.  Using the Gagliardo-Nirenberg interpolation theorem (see \cite[Theorem 5.8]{adams2003sobolev}), we see that 
\begin{align}\label{rjjksd1psdsd}
\rVert u_2\rVert_{W^{2,\infty}} +\rVert u_2\rVert_{H^3}\le_C \rVert u_2\rVert_{H^2}^{2/3}\rVert u_2\rVert_{H^5}^{1/3}.
\end{align}
Thus, Young's inequality gives us
\begin{align*}
\left(\rVert u_2\rVert_{W^{2,\infty}} +\rVert u_2\rVert_{H^3}\right)\rVert \Delta^2\theta\rVert_{L^2}&\le_C \rVert u_2\rVert_{H^2}^{2/3}\rVert u_2\rVert_{H^5}^{1/3}\rVert \Delta^2\theta\rVert_{L^2}\\
&\le_C \eta\rVert u_2\rVert_{H^5}^2 + C_\eta\rVert \Delta^2\theta\rVert_{L^2}^{6/5}\rVert u_2\rVert_{H^2}^{4/5}, \text{ for any $\eta>0$.}
\end{align*}
Plugging this into $A_1$ in \eqref{A123def3} and the estimate for $f(t)$ in \eqref{contransd2sd}, we arrive at
\begin{align}\label{a1sdwsd1}
A_{1}\le_C \eta g(t) +C_\eta \left(M\epsilon^2\right)^{3/5}\rVert u_2\rVert_{H^2}^{4/5} + \left( M\epsilon^2\right)^{1/2}\rVert \Delta^2w\rVert_{L^2}.
\end{align} Hence, choosing $\eta$ small enough and  plugging \eqref{a1sdwsd1} into \eqref{differential_ff}, we get
\begin{align}\label{fianlintegsd}
\frac{d}{dt}f(t)\le -Cg(t) + C\left((M\epsilon^2)^{3/5}\rVert u_2\rVert_{H^2}^{4/5} + (M\epsilon^2)^{1/2}\rVert \Delta^2w(t)\rVert_{L^2}  + A_3(t)\right),\text{ for $t\in [0, T^*]$.}
\end{align}

Towards a contradiction, let us estimate the time integrals of each term in the right-hand side.  We start with the term of $u_2$.
Applying the estimate for $u_2$ in  Proposition~\ref{higherdrhappy} with the assumption \eqref{size_assu} which is valid thanks to \eqref{proofthensd1}, we have that $\frac{2}{t}\int_{t/2}^t \rVert u_2(s)\rVert_{L^2}^2ds\le_C M\epsilon^2 t^{-3}$. In other words, the map $t\mapsto t^{1/4}\rVert u_2(t)\rVert_{H^2}^2$ satisfies
\begin{align*}
\frac{2}{t}\int_{t/2}^t s^{1/4}\rVert u_2(s)\rVert_{H^2}^2 ds\le_C t^{1/4}\frac{M\epsilon^2}{t^3}\le_C \frac{M\epsilon^2}{t^{11/4}}.
\end{align*}
Hence we obtain
\begin{align*}
\int_0^{T^*} s^{1/10}\rVert u_2\rVert_{H^2}^{4/5}  ds = \int_0^{T^*} \left(s^{1/4}\rVert u_2(s)\rVert_{H^2}^2\right)^{2/5}ds \le_C(M\epsilon^2)^{2/5},
\end{align*}
where the last inequality is due to Lemma~\ref{fialmass}.
Thus 
\begin{align}\label{nshdhsd1}
\int_{T^*_M}^{T^*} (M\epsilon^2)^{3/5} \rVert u_2(t)\rVert_{H^2}^{4/5} dt \le (M\epsilon^2)^{3/5}(T^*_M)^{-1/10}\int_{T^*_M}^{T^*} s^{1/10}\rVert u_2(t)\rVert_{H^2}^{4/5} dt\le_C (T^*_M)^{-1/10}M\epsilon^2.
\end{align}
Similarly, the estimate for $\rVert w\rVert_{H^4}^2$ in  Proposition~\ref{higherdrhappy} and the estimates in \eqref{contransd2sd}yield 
\begin{align*}
\frac{2}{t}\int_{t/2}^t s^{1/4}\rVert w(s)\rVert_{H^4}^2 ds\le_C t^{1/4}\left(\frac{2}t\int_{t/2}^t \rVert w(s)\rVert_{H^4}^2)ds \right)\le_C \frac{M\epsilon^2}{t^{11/4}}.
\end{align*}
Hence, applying Lemma~\ref{fialmass}, we obtain
\begin{align*}
\int_0^{T^*}s^{1/8}\rVert w(s)\rVert_{H^4}ds = \int_0^{T^*} \left(s^{1/4}\rVert w(s)\rVert_{H^4}^2 \right)^{1/2}ds \le_C (M\epsilon)^{1/2}
\end{align*}
Thus
\begin{align}\label{nshdhsd2}
\int_{T^*_M}^{T^*} (M\epsilon^2)^{1/2} \rVert w(t)\rVert_{H^4} dt \le (M\epsilon^2)^{1/2}(T^*_M)^{-1/8}\int_{T^*_M}^{T^*} s^{1/8}\rVert u_2(t)\rVert_{H^2} dt\le_C (T^*_M)^{-1/8}M\epsilon^2.
\end{align}
Now, we consider the term $A_2$.
It follows from  Proposition~\ref{higherdrhappy} and \eqref{energy_decay_bou_2} that  $\frac{2}{t}\int_{t/2}^t A_2(s)ds\le_C M\epsilon^2t^{-2}$. Hence, a similar argument as above yields
\begin{align*}
\frac{2}{t}\int_{t/2}^t s^{1/4}A_2(s)ds\le_C t^{1/4}\left(\frac{2}t\int_{t/2}^t A_2(s)ds \right) \le_C\frac{M \epsilon^2}{t^{7/4}}.
\end{align*}
Therefore, again Lemma~\ref{fialmass} gives us $\int_0^{T^*} s^{1/4}A_2(s)ds \le_C M\epsilon^2,$ and  
\begin{align}\label{nshdhsd3}
\int_{T_M^*}^{T^*} A_2(s)ds \le (T^*_M)^{-1/4}\int_{T_M^*}^{T^*}s^{1/4}A_2(s)ds \le_C (T^*_M)^{-1/4}M\epsilon^2.
\end{align}
  Collecting \eqref{nshdhsd1}, \eqref{nshdhsd2} and \eqref{nshdhsd3} and using \eqref{rkksd21sd}, we obtain
  \[
  \int_{T_M^*}^{T^*} (M\epsilon^2)^{3/5}\rVert u_2\rVert_{H^2}^{4/5} + (M\epsilon^2)^{1/2}\rVert \Delta^2w(t)\rVert_{L^2}  + A_2(t) dt\le_C M\epsilon^2 (T_M^*)^{-1/10}\le_C (\log\log M)^{-1/10}M\epsilon^2.
  \]
  Hence integrating \eqref{fianlintegsd} in time over $t\in [T_M^*,T^*]$, we get 
  \[
  f(T^*) + \int_{T_M^*}^{T^*} g(s)ds \le Cf(T_M^*)+C(\log\log M)^{-1/10}M\epsilon^2\le_C \epsilon^2\left(\log M\right)^C+(\log\log M)^{-1/10}M\epsilon^2,
  \] where we used \eqref{nsjdjwdwsad} for the second inequality. 
  Combining this with  the estimate for $\int_0^{T_M^*} g(s)ds$ in  \eqref{nsjdjwdwsad}, we see that
  \[
   f(T^*) + \int_{0}^{T^*} g(s)ds\le_C \epsilon^2\left(\log M\right)^C+(\log\log M)^{-1/10}M\epsilon^2\le_C M\epsilon^2\left( \frac{\left(\log M\right)^C}{M} + \left(\log\log M\right)^{-1/10}\right).
  \]
   Hence  \eqref{contransd2sd} must imply
  \[
  M\epsilon^2  \le C \epsilon^2M\left(\frac{(\log M)^C}{M}+ (\log\log M)^{-1/10}\right),
  \]
  in other words,
  \[
  1\le C \left((\log\log M)^{-1/10} + \frac{(\log M)^C}{M} \right).
  \]
  Since $C$ is a constant that depends on only the steady state $\rho_s$, we can choose $M$ large enough so that this inequality leads to a contradiction. This proves the claim \eqref{proofthensd1} and finishes the proof. 
    \begin{appendix}
  \section{ODE estimates}\label{ode11sx}

In the appendix, we collect elementary ODE estimates.   
\begin{lemma}\label{odelem}
Let $f,g,\alpha:[0,T]\mapsto \mathbb{R}^+$ be smooth functions for some $T>0$. Suppose it holds that
\begin{align}\label{oporsd21}
\frac{d}{dt} (f(t)+g(t))\le -\left( f(t)^2 \alpha(t)^{-1} + g(t)\right),\text{ for $t\in [0,T]$.}
\end{align}
Then we have
\[
f(T)+g(T)\le C\frac{A}{T^2}, \text{ where $A:=\max\left\{ \int_0^T\alpha(s)ds, f(0)+g(0)\right\}$}.
\]
\end{lemma}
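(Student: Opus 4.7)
The plan is to prove this via a weighted Lyapunov functional tailored to the target decay rate $t^{-2}$. Write $h(t) := f(t)+g(t)$ and consider
\[
V(t) := (t+2)^{2} h(t).
\]
The weight $(t+2)^2$ is dictated by the desired conclusion $h(T) \lesssim A/T^2$, and the shift by $2$ will turn out to give the right sign for the $g$-contribution on the entire interval $[0,T]$, not just for large $t$.

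Differentiating and using the hypothesis \eqref{oporsd21} gives
\[
V'(t) \;\le\; 2(t+2)(f+g)\;-\;(t+2)^{2}\bigl(g+f^{2}\alpha^{-1}\bigr)
\;=\; \underbrace{2(t+2)f-(t+2)^{2}f^{2}\alpha^{-1}}_{=:J_{1}(t)} \;-\; \underbrace{t(t+2)\,g(t)}_{\ge 0}.
\]
For the $f$-part, I will complete the square: viewing $J_{1}$ as a concave quadratic in $f$, the pointwise maximum over $f\ge0$ is attained at $f=\alpha/(t+2)$, and a direct computation (or AM-GM: $2(t+2)f \le \alpha + (t+2)^{2}f^{2}\alpha^{-1}$) yields $J_{1}(t)\le \alpha(t)$. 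Dropping the non-positive $g$-term thus leaves the simple differential inequality
\[
V'(t)\;\le\;\alpha(t),\qquad t\in[0,T].
\]

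Integrating from $0$ to $T$ gives $V(T) \le V(0) + \int_{0}^{T}\alpha(s)\,ds \le 4\bigl(f(0)+g(0)\bigr) + \int_{0}^{T}\alpha(s)\,ds \le 5A$, using the definition of $A$. Since $(T+2)^{2}\ge T^{2}$, this rearranges to $h(T)\le 5A/T^{2}$, proving the lemma with $C=5$.

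The main (and essentially only) obstacle is choosing the correct Lyapunov functional; once $V(t)=(t+2)^{2}h(t)$ is in hand, the calculation is purely algebraic. The shift by $2$ in the weight is not cosmetic: without it the coefficient of $g$ in $V'$ would be $(t(2-t))$, which is positive on $[0,2)$ and would force a separate short-time argument; with the shift, $t(t+2)\ge0$ on all of $[0,T]$. The completion of the square in the $f$-variable produces exactly the source term $\alpha(t)$ needed on the right-hand side, which is why the assumption $\int_{0}^{T}\alpha\le A$ enters naturally.
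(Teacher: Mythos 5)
Your proof is correct, and it takes a genuinely different and cleaner route than the paper. The paper partitions $[0,T]$ into the two time sets $\{g\le\alpha\}$ and $\{g>\alpha\}$; on the first it reduces \eqref{oporsd21} to a pure Riccati-type inequality $h'\le -Ch^2\alpha^{-1}$, integrates and invokes Jensen's inequality, and on the second it gets exponential decay $h'\le -Ch$, then compares $|I|$ with $|J|$ to handle both cases. Your weighted Lyapunov functional $V(t)=(t+2)^2 h(t)$ unifies everything in one computation: the shift in the weight makes $2(t+2)-(t+2)^2 = -t(t+2)\le 0$ so the $g$-term is harmlessly dissipative, and Young's inequality absorbs the $f$-cross term into the Riccati term at the price of the source $\alpha(t)$. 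This buys a shorter proof, an explicit constant $C=5$, and no case analysis or measure-theoretic decomposition of the time interval. The paper's argument, while longer, makes more transparent \emph{why} the interplay between the quadratic decay of $f$ and the linear decay of $g$ produces the $t^{-2}$ rate; yours makes the rate an algebraic consequence of choosing the right weight. Both are valid; yours is the kind of argument one would keep.

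One small stylistic point: you may wish to note explicitly that the hypothesis forces $\alpha(t)>0$ (so $\alpha^{-1}$ is defined), which is also exactly what the AM--GM step requires; and that $V(0)=4\bigl(f(0)+g(0)\bigr)\le 4A$ uses the definition of $A$ as a maximum. These are both implicit in your write-up and correct, but worth stating when writing the argument formally.
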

\begin{proof}
Let us decompose the interval $[0,T]$ as
\[
[0,T] =: I\cup J,\text{ where } I:=\left\{ t\in [0,T]: g(t)\le \alpha(t)\right\},\quad  J:=\left\{ t\in [0,T]: g(t)> \alpha(t)\right\}.
\]
Since $f,g,\alpha$ are smooth functions, we can decompose each of $I_1,I_2$ into a union of disjoint intervals:
\[
I=\cup_{i\in\mathbb{N}\cup\left\{ 0\right\}}[t_{2i},t_{2i+1}]=:\cup_{i\in\mathbb{N}\cup\left\{ 0\right\}} I_i,\quad J=\cup_{i\in \mathbb{N}\cup\left\{ 0\right\}}[t_{2i+1},t_{2i+2}]=:\cup_{i\in\mathbb{N}\cup\left\{ 0\right\}} J_i,\text{ for $t_i\le t_{i+1}$.}
\]
Let us consider two cases, either $|I|\ge |J|$ or $|I|\le |J|$. We suppose first $|I|\ge |J|$. On each $I_i$, we have
\[
g(t)^2\alpha^{-1} =g(t)\times (g(t)\alpha(t)^{-1})\le g(t),\text{ for $t\in I$.} 
\]
Hence, the differential inequality \eqref{oporsd21} reads as
\[
\frac{d}{dt} (f(t)+g(t)) \le -(f(t)^2\alpha(t)^{-1} +g(t)^2 \alpha(t)^{-1}) = -C \left(f(t)+g(t) \right)^2 \alpha(t)^{-1},\text{ for $t\in I$.}
\] 
Denoting $h(t):=f(t)+g(t)$, we have
\[
\frac{d}{dt}h(t)\le -C h(t)^2 \alpha^{-1}(t), \text{ for $t\in I$.}
\]
Integrating the both sides over $t\in I_i$, we get
\[
\frac{1}{h(t_{2i+1})} - \frac{1}{h(t_{2i})}\ge C\int_{t_{2i}}^{t_{2i+1}}\alpha(s)^{-1}ds.
\]
Summing the both sides over $i$ and using that $\frac{1}{h(t_{2i+2})}-\frac{1}{h(t_{2i+1})}\ge0$ for all $i$ due to the monotonicity of $t\mapsto h$, we get, denoting $t_{I,\infty}:=\sup_{t\in I}t\le T$, 
\[
\frac{1}{h({t_{I,\infty}})} - \frac{1}{h(t_{0})}\ge  \sum_{i=0}^\infty \int_{I_i}\alpha(s)^{-1}ds = \int_{I}\alpha(s)^{-1}ds \ge \frac{|I|^2}{\int_{I}\alpha(s)ds},
\]
where the last inequality follows from Jensen's inequality.  Again since $h$ is nonnegative and monotone decreasing, the left-hand side satisfies
\[
\frac{1}{h({t_{I,\infty}})} - \frac{1}{h(t_{0})} \le \frac{1}{h(t_{I,\infty})}\le \frac{1}{h(T)}.
\]
On the other hand, the assumption that $|I|\ge |J|$ implies $|I| \ge \frac{1}{2}T$, therefore, 
\[
\frac{|I|^2}{\int_{I}\alpha(s)ds} \ge \frac{CT^2}{\int_{I}\alpha(s)ds}\ge \frac{CT^2}{\int_{0}^T\alpha(s)ds}.
\]
Thus we arrive at $\frac{1}{h(T)}\ge \frac{CT^2}{\int_0^T \alpha(s)ds}$, that is,
\begin{align}\label{rkawksomething}
{h(T)}\le \frac{\int_0^T \alpha(s)ds}{CT^2},\text{ if $|I|\ge |J|$.}
\end{align}
Now, let us consider the case where $|I|\le |J|$. On the interval $J_i$,  Young's inequality yields
\[
f(t) = f(t)\alpha(t)^{-1/2}\alpha^{1/2}\le \frac{1}{2}f(t)^2\alpha^{-1}(t) + \frac{1}{2}\alpha(s)\le \frac{1}{2 }f(t)^2\alpha(t)^{-1} +\frac{1}{4}g(t),\text{ for $t\in J$.}
\]
Therefore,
\[
\frac{1}{2 }f(t)^2\alpha(t)^{-1} +g(t) \ge_Cf(t) + g(t),\text{ for $t\in J$.}
\]With this, the differential inequality \eqref{oporsd21} reads as
\[
\frac{d}{dt}h(t) \le -Ch(t),\text{ for $t\in J$.}
\]
Solving this, we obtain $h(t)\le h(t_{2i+1})e^{-C(t-t_{2i+1})}$ for $t\in J_i$. In other words, we get 
\[
\frac{h(t_{2i+2})}{h(t_{2i+1})}\le e^{-C|J_i|},\text{ for all $i$.}
\]
Since $t\mapsto h(t)$ is monotone decreasing, we have $\frac{h(t_{2i})}{h(t_{2i-1})}\le 1$ for $i\ge 1$, hence, denoting $t_{J,\infty}:=\sup_{t\in J}\le T$. we obtain
\[
\frac{h(T)}{h(0)}\le \frac{h(T)}{h({t_1})} \le \Pi_{i=0}^\infty \frac{h({t_{2i+2}})}{h(t_{2i+1})} \le \Pi_{i=0}^\infty e^{-C|J_i|}\le e^{-C|J|}\le e^{-CT},
\]
where the last inequality follows from the assumption that $|J| \ge |I|$, thus $|J|\ge \frac{1}{2}|T|$. Thus we arrive at 
\[
h(T)\le h(0)e^{-CT}\le \frac{Ch(0)}{T^2},\text{ if $|J|\ge |I|$.}
\]
Combining this with \eqref{rkawksomething}, we obtain the desired result.
\end{proof}

\begin{lemma}\label{odeep21}
Let $f,g,h:[0,T]\mapsto \mathbb{R}^+$ be smooth functions for some $T>0$. Suppose it holds that
\[
\frac{d}{dt}f(t)\le -g(t) +h(t),\text{ for $t\in [0,T]$},
\]
and $f,h$ satisfy  decay conditions:
\[
\frac{2}{t}\int_{t/2}^{t}f(s)ds \le \frac{A}{t^{n-1}},\quad \frac{2}{t}\int_{t/2}^{t}h(s)ds \le \frac{A}{t^{n}},\text{ for some $n\in\mathbb{N},A>0$ and for all $t\in [0,T]$.}
\]
Then,
\[
\frac{2}{t}\int_{t/2}^{t}g(s)ds \le \frac{A}{t^n},\text{ for all $t\in [0,T]$.}
\]
\end{lemma}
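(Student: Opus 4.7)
The plan is to integrate the differential inequality over the dyadic interval $[t/2,t]$, which converts the pointwise bound $f'\le -g+h$ into an integral bound on $g$, then to control the arising boundary terms using only the hypotheses. Concretely, integrating gives
\[
\int_{t/2}^t g(s)\,ds \le f(t/2)-f(t)+\int_{t/2}^t h(s)\,ds \le f(t/2)+\int_{t/2}^t h(s)\,ds,
\]
where the last step drops $-f(t)\le 0$. The hypothesis on $h$ immediately yields $\int_{t/2}^t h(s)\,ds \le A/(2t^{n-1})$, so after dividing by $t/2$ this term contributes $\le CA/t^n$, as desired. The only nontrivial issue is the pointwise quantity $f(t/2)$, and so the whole task is reduced to proving
\[
f(t/2)\le_C \frac{A}{t^{n-1}},
\]
since then $\tfrac{2 f(t/2)}{t}\le_C A/t^n$ and the lemma follows.

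To get this pointwise estimate I would exploit the fact that $f$ is ``almost decreasing'' in the sense that $f'\le -g+h\le h$. First, apply the hypothesis on $f$ at time $t/2$ to obtain
\[
\int_{t/4}^{t/2} f(s)\,ds \le \frac{A}{4}\cdot \frac{2^{n-1}}{t^{n-2}}.
\]
By the mean value theorem for integrals there exists $t^*\in [t/4,t/2]$ with $f(t^*)\le \tfrac{4}{t}\int_{t/4}^{t/2}f(s)\,ds \le_C A/t^{n-1}$. Then integrate $f'\le h$ on $[t^*,t/2]$ and use the hypothesis on $h$ at time $t/2$ to get
\[
f(t/2)\le f(t^*)+\int_{t^*}^{t/2}h(s)\,ds \le_C \frac{A}{t^{n-1}}+\int_{t/4}^{t/2}h(s)\,ds \le_C \frac{A}{t^{n-1}},
\]
which closes the argument.

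I expect no real obstacle: the only mildly delicate point is that the hypothesis supplies merely an averaged bound on $f$, not a pointwise one, so one cannot naively evaluate $f$ at $t/2$. The mean value/pigeonhole step above, combined with the ``almost monotonicity'' provided by $f'\le h$, is exactly the device that bridges this gap. The dyadic choice of the two intervals $[t/4,t/2]$ (for locating $t^*$ and controlling the increment of $f$) and $[t/2,t]$ (for integrating the main inequality) is what makes all the prefactors come out as universal constants, independent of $n$ up to $2^{O(n)}$, which is absorbed in the $\le_C$ notation used throughout the paper.
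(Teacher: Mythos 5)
Your proof is correct and rests on the same core idea as the paper's: average over the previous dyadic block $[t/4,t/2]$ to tame the boundary term $f$ that arises when integrating the differential inequality over $[t/2,t]$. The implementations differ only slightly — the paper integrates the inequality $\int_s^t g \le \int_s^t h + f(s)$ in $s$ over $[t/4,t/2]$ (a Fubini-type averaging, never needing a pointwise bound on $f$), whereas you pigeonhole a good point $t^*\in[t/4,t/2]$ and propagate to $t/2$ via the extra observation $f'\le h$; both routes are valid and produce the same $n$-dependent constant.
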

\begin{proof}
Integrating the differential inequality in time over $[s,t]$ for $0<s<t<T$, we get $f(t)-f(s) +\int_{s}^t g(u)du \le \int_s^t h(u)du$, thus
\[
\int_{s}^t g(u)du \le \int_s^t h(u)du + f(s).
\] 
Integrating the both sides in $s$ over $[t/4,t/2]$, we see that the left-hand side can be estimated as
\begin{align*}
\int_{t/4}^{t/2}\int_{s}^t g(u)duds &= \int_{t/4}^{t/2}\int_0^t g(u)1_{u\ge s}duds = \int_0^t g(u)\int_{t/4}^{t/2}1_{u\ge s}duds \ge \int_{0}^t g(u)\int_{t/4}^{t/2}1_{u\ge t/2}dsdu \\
&= \int_{t/2}^t g(u)\frac{t}{4}du.
\end{align*}
For the right-hand side estimate, we compute
\[
\int_{t/4}^{t/2} \int_s^{t}h(u) duds \le \int_{t/4}^{t/2}\int_{t/4}^t h(u)duds = \frac{t}4\left(\int_{t/2}^{t}h(u)du +\int_{t/4}^{t/2}h(u)du \right)\le Ct\frac{A}{t^{n-1}}\le \frac{CA}{t^{n-2}},
\]
where we used the assumption on $h$ for the second last inequality. Also the following estimate is trivial:
\[
\int_{t/4}^{t/2}f(s)ds \le \frac{CA}{t^{n-2}}.
\]
Putting them together, we arrive at
\[
t\int_{t/2}^t g(u)du \le \frac{CA}{t^{n-2}}. 
\]
Dividing the both sides by $t^2$, we obtain the desired result.
\end{proof}

\begin{lemma}\label{odee2p21}
Let $f,g:[0,T]\mapsto \mathbb{R}^+$ be smooth functions for some $T>0$. Suppose it holds that
\[
\frac{d}{dt}f(t)\le -f(t) +g(t),\text{ for $t\in [0,T]$},
\]
and $g$ satisfies a decay condition:
\[
\frac{2}{t}\int_{t/2}^{t}g(s)ds \le \frac{A}{t^{n}},\text{ for some $n\in\mathbb{N},A>0$ and for all $t\in [0,T]$.}
\]
Then,
\[
\frac{2}{t}\int_{t/2}^{t}f(s)ds \le \frac{A+B}{t^n},\text{ for all $t\in [0,T]$, where $B:=f(0)+\int_0^T g(s)ds$.}
\]
\end{lemma}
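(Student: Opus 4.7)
The strategy is to convert the differential inequality into an explicit pointwise bound via Duhamel's formula and then integrate carefully over the window $[t/2, t]$, using Fubini together with the hypothesized averaged decay of $g$ to extract one extra factor of $1/t$ relative to the bound on $g$.

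Concretely, I would first multiply $f' + f \le g$ by $e^{t}$ to recognize $(e^{t} f)' \le e^{t} g$ and integrate from $0$ to $t$, producing
\[
f(t) \le f(0)\,e^{-t} + \int_0^t e^{-(t-u)} g(u)\,du, \qquad t \in [0, T].
\]
Integrating this in $s \in [t/2, t]$ and swapping the order of the resulting double integral via Fubini gives
\[
\int_{t/2}^t f(s)\,ds \le f(0)\,e^{-t/2} + \int_0^t g(u)\,K(u,t)\,du,
\]
where $K(u,t) := \int_{\max(u,\,t/2)}^{t} e^{-(s-u)}\,ds$ satisfies $K(u,t) \le e^{u-t/2}$ for $u \le t/2$ and $K(u,t) \le 1$ for $u \in [t/2, t]$. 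I would then split the $u$-integration into the three regions $[0, t/4]$, $[t/4, t/2]$, and $[t/2, t]$. On the first region, $K \le e^{-t/4}$ together with the global bound $\int_0^T g \le B$ yields a contribution of at most $e^{-t/4}\,B$. On the latter two, bounding $K \le 1$ and applying the hypothesis $\tfrac{2}{s}\int_{s/2}^s g \le A/s^n$ at $s = t/2$ and $s = t$ produces $\int_{t/4}^t g \le C_n A/t^{n-1}$. Collecting the pieces and dividing by $t/2$ turns the $A$-term into $A/t^n$, which is the target rate.

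The main subtlety is that the $f(0)$ and tail-of-$g$ contributions only decay exponentially in $t$, not polynomially, so they must be reabsorbed into the polynomial envelope: for $t \ge 1$, the standard bound $t^{n-1} e^{-t/4} \le C_n$ converts $B\,e^{-t/4}/t$ into $C_n B/t^n$. For $t \le 1$ the exponentials no longer help, but the crude Gronwall estimate $f(s) \le f(0) + \int_0^s g \le B$ already gives $\tfrac{2}{t}\int_{t/2}^t f \le B \le (A+B)/t^n$ since $t^n \le 1$. Gluing the two regimes completes the proof, up to the universal $n$-dependent constant tacitly absorbed in the paper's ODE lemmas.
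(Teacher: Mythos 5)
Your proof is correct, but it takes a somewhat different route from the paper. Both proofs begin with the same Duhamel representation $f(t)\le f(0)e^{-t}+\int_0^t e^{-(t-u)}g(u)\,du$ (the paper phrases it via comparison with the solution $F$ of the corresponding equality). The paper then proceeds in two passes: it first extracts a \emph{pointwise} bound $f(t)\lesssim (A+B)t^{-(n-1)}$ by splitting $\int_0^t=\int_0^{t/2}+\int_{t/2}^t$ and bounding the kernel by $e^{-t/2}$ on the first piece, and it then invokes the previous ODE lemma (Lemma~\ref{odeep21}) a second time, with the dissipative term being $f$ itself, to trade the pointwise decay for the extra factor of $1/t$ in the window average. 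Your argument instead integrates the Duhamel bound over the window $[t/2,t]$ directly, swaps the order via Fubini, and exploits that the resulting kernel $K(u,t)$ has unit sup-norm on $[t/2,t]$ and is exponentially small for $u\le t/4$; this yields the averaged rate $t^{-n}$ in a single pass, without appealing to Lemma~\ref{odeep21}. Your version is therefore more self-contained, at the modest cost of a slightly more involved Fubini computation; the paper's version modularizes the bootstrap step so it can be reused. Both versions need an $n$-dependent constant (from $t^{n-1}e^{-ct}\le C_n$ and from the loss in the kernel/integral split), which the paper silently absorbs into its $\le_C$ convention even though the lemma as stated has a bare $\le$ — you flag this correctly. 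Your case split at $t\le 1$ to handle small times is harmless and in fact tidier than leaving that regime implicit, as the paper does.
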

\begin{proof}
Let $F$ be a solution to
\[
\frac{d}{dt}F(t)=-F(t)+g(t),\ F(0)=f(0).
\]
The comparison principle tells us  that
\[
f(t)\le F(t),\text{ for all $t\in [0,T]$.}
\]
On the other hand, the method of integrating factor tells us
\begin{align*}
F(t)&= e^{-t}F(0) + \int_0^t g(s)e^{s-t}ds= e^{-t}f(0) + \int_0^{t/2}g(s)e^{s-t}ds + \int_{t/2}^{t}g(s)e^{s-t}ds \\
&\le e^{-t}f(0) + e^{-t/2}\int_0^{t/2}g(s)ds + \int_{t/2}^{t}g(s)ds \\
& \le e^{-t/2}\left(f(0) + \int_0^{t/2}g(s)ds \right) + \int_{t/2}^t g(s)ds.
\end{align*}
Denoting
\[
B:=f(0)+\int_0^t g(s)ds,
\]
and using the decay assumption on $g$, we obtain
\[
F(t)\le e^{-t/2}B +\frac{A}{t^{n-1}}\le \frac{A+B}{t^{n-1}}.
\]
Hence, $f(t)\le \frac{A+B}{t^{n-1}}$, and thus $\frac{2}t\int_{t/2}^t f(s)ds \le_C \frac{A+B}{t^{n-1}}$. Applying Lemma~\ref{odeep21}, we arrive at
\[
\frac{2}t\int_{t/2}^t f(s)ds \le \frac{A+B}{t^{n}},\text{ for all $t\in [0,T]$.}
\]
\end{proof}

Lastly, a proof of the next lemma can be found in \cite[Lemma 2.3]{park2024stability}.
\begin{lemma}\cite[Lemma 2.3]{park2024stability}\label{fialmass}
Let $T> 2$ and $n> 1$. Let a nonnegative function $f:[0,T]\mapsto \mathbb{R}^+$ satisfies
\[
\frac{2}{t}\int_{t/2}^{t} f(s)ds\le \frac{E}{t^n},\text{ for some $E>0$, for all $t\in [2,T]$}.
\]
Then, for $\alpha\in (1/n,1]$, we have 
\[
\int_1^T f(t)^{\alpha}ds \le C_{\alpha,n}E^{\alpha},
\]
where $C_{\alpha,n}>0$ does not depend on $T$.
\end{lemma}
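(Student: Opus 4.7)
The plan is a straightforward dyadic decomposition plus Hölder. The hypothesis is equivalent to the bound $\int_{t/2}^{t} f(s)\,ds \le \tfrac{E}{2 t^{n-1}}$ on every dyadic window of scale $t$. The trivial case $\alpha=1$ is then settled by summing this bound over a dyadic cover, so I would focus on $\alpha\in (1/n,1)$.

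First I would cover $[1,T]$ by the dyadic intervals $I_k:=[2^k,2^{k+1}]$ for $k=0,1,\dots,K$, where $K$ is the smallest integer with $2^{K+1}\ge T$. On each such window I apply Hölder's inequality with exponents $1/\alpha$ and $1/(1-\alpha)$:
\[
\int_{I_k} f(s)^{\alpha}\,ds \le \left(\int_{I_k} f(s)\,ds\right)^{\alpha} |I_k|^{1-\alpha}.
\]
Using the assumption with $t=2^{k+1}$ gives $\int_{I_k} f(s)\,ds \le C\,E\, 2^{-k(n-1)}$, and since $|I_k|=2^k$, this yields
\[
\int_{I_k} f(s)^{\alpha}\,ds \le C_{\alpha}\, E^{\alpha}\, 2^{-k(n-1)\alpha}\, 2^{k(1-\alpha)} = C_{\alpha}\, E^{\alpha}\, 2^{k(1-n\alpha)}.
\]

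Summing over $k\ge 0$ gives a geometric series with ratio $2^{1-n\alpha}$, which is strictly less than $1$ precisely when $\alpha>1/n$; this is where the hypothesis on $\alpha$ enters. Thus
\[
\int_1^{T} f(s)^{\alpha}\,ds \le C_{\alpha}\,E^{\alpha} \sum_{k=0}^{\infty} 2^{k(1-n\alpha)} = C_{\alpha,n}\, E^{\alpha},
\]
with a constant that depends only on $\alpha$ and $n$ and is independent of $T$, which is what we need. There is no genuine obstacle here; the only points requiring a line of care are (i) verifying that the interval $[1,2]$ is absorbed by the $k=0$ term (since the hypothesis applies for $t\in[2,T]$, one uses $t=2$ to control $\int_1^2 f$), and (ii) truncating the dyadic sum at $K$ when $T$ is finite, which is harmless because the full sum already converges.
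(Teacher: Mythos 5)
Your argument is correct, and it is the natural proof of a lemma of this type. The paper itself does not reprove the lemma—it defers to \cite[Lemma 2.3]{park2024stability}—so there is no in-text proof to compare against, but a dyadic decomposition of $[1,T]$ into $I_k=[2^k,2^{k+1}]$, H\"older with exponents $1/\alpha$ and $1/(1-\alpha)$ on each block, and summation of the resulting geometric series $\sum_k 2^{k(1-n\alpha)}$ (convergent precisely when $\alpha>1/n$) is exactly the standard route, and your computation of the exponents is right. The only place to be a little more careful than you were is the final dyadic block: if $2^{K+1}>T$ the hypothesis cannot be invoked at $t=2^{K+1}$, so one should instead take $K=\lfloor\log_2 T\rfloor$, treat the tail piece $[2^K,T]$ by noting that $T/2\le 2^K$ so $[2^K,T]\subset[T/2,T]$, and apply the hypothesis at $t=T$; this gives a bound of the same dyadic size and does not affect the sum. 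With that small adjustment the proof is complete and self-contained.
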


  \end{appendix}

  \bibliographystyle{abbrv}
\bibliography{references}

\end{document}